\documentclass[a4paper,english]{jnsao}
\usepackage[utf8]{inputenc}
\usepackage[T1]{fontenc}
\usepackage{csquotes}
\usepackage{babel}
\usepackage{tikz}
\usepackage{float}
\usepackage{algpseudocode}
\usepackage{enumitem}
\usepackage{comment}
\usepackage{mathtools}
\usepackage{subcaption}
\usepackage{multirow,graphicx,rotating,booktabs}
\usepackage[nameinlink,capitalize]{cleveref}
\usepackage{ifdraft}

\newcommand{\term}{\emph}

\newcommand{\field}[1]{\mathbb{#1}}
\newcommand{\N}{\mathbb{N}}

\newcommand{\R}{\field{R}}
\newcommand{\extR}{\overline \R}

\newcommand{\norm}[1]{\|#1\|}
\newcommand{\adaptnorm}[1]{\left\|#1\right\|}

\newcommand{\abs}[1]{|#1|}

\newcommand{\inv}[1]{#1^{-1}}
\newcommand{\grad}{\nabla}
\newcommand{\freevar}{\,\boldsymbol\cdot\,}

\newcommand{\Union}\bigcup
\newcommand{\Isect}\bigcap
\newcommand{\union}\cup
\newcommand{\isect}\cap
\newcommand{\bigunion}\bigcup
\newcommand{\bigisect}\bigcap

\newcommand{\defeq}{:=}

\newcommand{\downto}{\searrow}
\newcommand{\upto}{\nearrow}
\newcommand{\subdiff}{\partial}

\DeclareMathOperator*{\argmin}{arg\,min}

\DeclareMathOperator{\Dom}{dom}

\DeclareMathOperator{\range}{ran}

\def \uminusSym{\setbox0=\hbox{$\cup$}\rlap{\hbox 
        to\wd0{\hss\raise0.5ex\hbox{$\scriptscriptstyle{-}$}\hss}}\box0}
    

\newcommand{\iprod}[2]{\langle #1,#2\rangle}
\newcommand{\adaptiprod}[2]{\left\langle #1,#2\right\rangle}
\def\llangle{\langle\kern-3pt\langle}
\def\rrangle{\rangle\kern-3pt\rangle}

\def \weaktostarSym{\setbox0=\hbox{$\rightharpoonup$}\rlap{\hbox 
        to\wd0{\hss\raise1ex\hbox{$\scriptscriptstyle{*\,}$}\hss}}\box0}
    

\def\linear{\mathbb{L}}

\newcommand{\setto}{\rightrightarrows}

\def\extR{\overline \R}

\def\opt#1{\bar #1}
\def\realopt#1{\hat #1}
\def\this#1{#1^k}
\def\nexxt#1{#1^{k+1}}

\def\optu{{\opt{u}}}
\def\optx{{\opt{x}}}
\def\optv{{\opt{v}}}

\def\opty{{\opt{y}}}

\def\realoptu{{\realopt{u}}}

\def\nextx{\nexxt{x}}

\def\nexty{\nexxt{y}}

\def\thisu{\this{u}}
\def\thisx{\this{x}}

\def\thisz{\this{z}}
\def\thisy{\this{y}}
\def\thisv{\this{v}}

\def\tauTest{\phi}

\def\sigmaTest{\psi}

\def\GenGap{\mathcal{G}}

\newcommand{\Precond}{M}



\def\prev#1{#1^{k-1}}
\def\prevu{\prev{u}}

\def\thisz{\this{z}}

\DeclareMathOperator{\prox}{prox}

\def\d{\,d}

\newcommand{\fakenorm}[1]{\llbracket #1 \rrbracket}

\def\infconv{\mathop{\Box}}

\let\phi=\varphi
\let\epsilon=\varepsilon
\def\alt#1{\tilde #1}

\def\PpredictConstr{\mathcal{X}}
\def\DpredictConstr{\mathcal{Y}}
\def\PDpredictConstr{\mathcal{U}}
\def\InvDisplacements{\mathcal{V}}
\def\Id{\mathop{\mathrm{Id}}}

\DeclareMathOperator{\dynregret}{dynamic\_regret}

\def\primalpredict{\breve x}
\def\dualpredict{\breve y}
\def\pdpredict{\breve u}


\def\paffinecontrol{\pi_k}
\def\daffinecontrol{\tilde{\pi}_k}

\def\ifempty#1{\def\temp{#1}\ifx\temp\empty}

\renewrobustcmd{\downto}{{{\mathchoice%
            {\rotatebox[origin=c]{-20}{$\to$}}
            {\rotatebox[origin=c]{-20}{$\to$}}
            {\rotatebox[origin=c]{-20}{\scalebox{0.75}{$\to$}}}
            {\rotatebox[origin=c]{-20}{\scalebox{0.6}{$\to$}}}
}}}

\renewrobustcmd{\upto}{{{\mathchoice%
            {\rotatebox[origin=c]{20}{$\to$}}
            {\rotatebox[origin=c]{20}{$\to$}}
            {\rotatebox[origin=c]{20}{\scalebox{0.75}{$\to$}}}
            {\rotatebox[origin=c]{20}{\scalebox{0.6}{$\to$}}}
}}}

\usepackage{tikz,pgfplots}
\usepackage{filmstrip}

\bibliographystyle{jnsao}

\def\arxivprebuild{false}

\ifdefined\arxivprebuild
    \usetikzlibrary{external}
    \tikzexternalize[prefix=build/]
    \tikzexternaldisable
\else
    \usepackage{tikzexternal}
    \tikzsetexternalprefix{build/}
    \let\tikzexternaldisable\null
    \let\tikzexternalenable\null
\fi

\tikzifexternalizing{
    \newcommand{\afterpage}[1]{#1}
}{
    \usepackage{afterpage}
}

\usepgfplotslibrary{colorbrewer}

\input{better_predictmacros}

\manuscriptcopyright{}
\manuscriptlicense{}
\manuscripteprinttype{arXiv}
\manuscripteprint{2405.02497}
\manuscriptstatus{Manuscript}
\date{2024-05-03; revised 2024-07-05}

\theoremstyle{definition}
\newtheorem{assumption}[theorem]{Assumption}
\crefname{assumption}{Assumption}{Assumptions}
\floatstyle{ruled}
\floatname{algorithm}{Algorithm}
\newfloat{algorithm}{tbp!}{loa}
\crefname{algorithm}{Algorithm}{Algorithms}

\algrenewcommand{\algorithmiccomment}[1]{\hfill$\rightsquigarrow$\ {\itshape #1}}

\author{
    Neil Dizon\thanks{%
        Department of Mathematics and Statistics, University of Helsinki, Finland,
        \email{neil.dizon@helsinki.fi}, \orcid{0000-0001-8664-2255}
    }
    \and
    Jyrki Jauhiainen\thanks{%
        Department of Technical Physics, University of Eastern Finland,
        \email{jyrki.h.jauhiainen@helsinki.fi}, \orcid{0000-0001-6711-6997}
    }
    \and
    Tuomo Valkonen\thanks{%
        ModeMat, Escuela Politécnica Nacional, Quito, Ecuador
        \text{and}
        Department of Mathematics and Statistics, University of Helsinki, Finland,
        \email{tuomo.valkonen@iki.fi}, \orcid{0000-0001-6683-3572}
    }
}
\shortauthor{Dizon, Jauhiainen, Valkonen}
\title{Prediction techniques for dynamic imaging with online primal-dual methods}
\shorttitle{Prediction techniques for dynamic imaging}

\acknowledgements{%
    This research has been supported by the Academy of Finland grants 338614 and 314701.
}

\begin{document}

\maketitle

\begin{abstract}
    Online optimisation facilitates the solution of dynamic inverse problems, such as image stabilisation, fluid flow monitoring, and dynamic medical imaging.
    In this paper, we improve upon previous work on predictive online primal-dual methods on two fronts.
    Firstly, we provide a more concise analysis that symmetrises previously unsymmetric regret bounds, and relaxes previous restrictive conditions on the dual predictor.
    Secondly, based on the latter, we develop several improved dual predictors.
    We numerically demonstrate their efficacy in image stabilisation and dynamic positron emission tomography.
\end{abstract}

\section{Introduction}
\label{sec:intro}

Many real-world applications involve processing information evolving over time. This includes tasks like computational image stabilisation based on rapid successions of noisy images \cite{tico2009digital,tuomov-predict,zhou2016image}, fluid flow monitoring in industrial processes \cite{tuomov-phaserec,holland2010reducing,hunt2014weighing,lipponen2011nonstationary}, as well as the reconstruction of medical images in the presence of physical motion \cite{bousse2016maximum,burger2017variational,iwao2022brain,natterer2001mathematics}. When the monitoring period is long, and the results are needed immediately, while data is still arriving, it is not feasible to solve one large reconstruction problem after all the data has arrived. Instead, online reconstruction techniques are required.

\term{Online optimisation} extends traditional optimisation by allowing the objective function, parameters, or constraints to change over time, with each iteration of the algorithm. In this paper, we consider the formal problem
\begin{equation}
    \label{eq:pd:problem}
    \min_{(x^0,x^1,x^2\ldots) \in \PpredictConstr}~ \sum_{k=0}^\infty  J_k \defeq F_k(x^k)+E_k(x^k)+G_k(K_k x^k),
\end{equation}
where  $F_k, E_k: X_k \to \extR$, $G_k: Y_k \to \extR$ are convex, proper, and lower semi-continuous on Hilbert spaces $X_k$ and $Y_k$ ($k \in \N$),  $E_k$ is additionally smooth, and $K_k \in \linear(X_k; Y_k)$ is linear and bounded. The set $\PpredictConstr \subset \prod_{k=0}^\infty X_k$ encodes temporal coupling between the variables, and the frame index $k$ represents time-evolution. For example, basing computational image stabilisation on consecutive temporally coupled total variation denoising problems, we arrive at
\[
    \min_{(x^0,x^1,x^2\ldots) \in \PpredictConstr}~   \sum_{k=0}^\infty \frac{1}{2}\norm{\thisx - z_k}^2 + \alpha \norm{D_k\thisx}_{2,1},
\]
where  $z_k$ is the measurement data, and $\alpha$ is a regularisation parameter for isotropic total variation based on the (discretised) differential operator $D_k$.
The set $\PpredictConstr$ models, for example, the optical flow between consecutive image frames.
Similarly, dynamic positron emission tomography (PET) reconstruction affected by patient body motion can be modelled as
\[
    \min_{(x^0,x^1,x^2\ldots) \in \PpredictConstr}~   \sum_{k=0}^\infty  \delta_{\ge 0}(\thisx) + \sum_{i=1}^n \left( [A_k\thisx]_i - [z_k]_i \log([A_k\thisx+c_k]_i)\right) + \alpha \norm{D_k\thisx}_{2,1},
\]
where $A_k$ is the forward model based on a partial Radon transform and $c_k$ is a known vector with non-negative entries.

Early dynamic (consensus-type) optimisation studied series of static problems \cite{kar2010gossip,olfati2007consensus}, assuming sufficient computational resources to solve for each $k$ the individual static problems
\begin{equation}
    \label{eq:intro:static-problem}
    \min_{\thisx \in X_k}~ F_k(x^k)+E_k(x^k)+G_k(K_k x^k),
\end{equation}
before new data arrives.
However, such an approach fails to exploit for temporal super-resolution any physical temporal coupling present between the data for the different frames $k$.
Alternatively, it is possible to solve for each increasing $N$ the finite time window problem
\[
    \min_{x^{0:N} \in \PpredictConstr_{0:N}}~ \sum_{k=0}^N F_k(x^k)+E_k(x^k)+G_k(K_k x^k).
\]
where we use the shorthand slicing notations $x^{0:N} \defeq (x^0,\dots,x^N)$ and $\PpredictConstr_{0:N} \defeq \{ x^{0:N} \mid (x^0,x^1,\ldots) \in \PpredictConstr\}$.
However, for large $N$, these problems become numerically prohibitively expensive. Memory constraints may also compel the exclusion of earlier data, thereby limiting the approach to short  windows of recent data. We will thus adopt an online optimisation approach, focusing on cases where one can only afford one (or at most a few) steps of an optimisation algorithm within a time interval of interest. For modern introductions to online optimisation, we refer to \cite{belmega2018online,hazan2016introduction,orabona2020modern}.

Online optimisation algorithms for dynamic problems are categorised into \emph{structured} and \emph{unstructured} methods \cite{simonetto2020time}.  Structured algorithms take advantage of the temporal nature of the problem to predict and approximate an optimal solution at each time step. On the other hand, unstructured algorithms are agnostic of the temporal nature of the problem and rely only on the optimisation problem that are presented at each time, or a history thereof. Non-predictive primal-dual methods under this category include \cite{bernstein2019online,tang2022running,zhang2021online}. Structured online algorithms can be further subcategorised into \emph{prediction-correction} methods and \term{predictors}. A mere predictor carries out one step of an optimisation algorithm with respect to a predicted objective function but does not perform a corrective step when the new problem becomes available, e.g., see \cite{chang2021online,Nonhoff2020945,zhang2023regrets}. In contrast, prediction-correction methods predict how the optimisation problem changes, and then correct for the errors in predictions once the new objective function is revealed. Primal methods under this category include  \cite{hall13dynamical,simonetto2017prediction,simonetto2016class,zhang2019distributed} based on gradient and mirror descents, among others. The available literature on primal-dual methods within this class is limited to \cite{simonetto2018dual,tuomov-predict}. In particular, the Predictive Online Primal-Dual Proximal Splitting (POPD) method of \cite{tuomov-predict} is amenable to the online solution of \eqref{eq:pd:problem} where $E_k = 0$.

The difficulty with proving something about online methods is that \emph{convergence} results are rarely available. Instead, one attempts to bound the \emph{regret} of past updates with respect to all information available up to an instant $N$. In the dynamic case, following \cite{hall13dynamical}, one bounds the \emph{dynamic regret} defined by
\begin{equation*}
    \dynregret(x^{0:N}) = \sup_{\optx^{0:N} \in \PpredictConstr_{0:N}} \sum_{k=0}^N \left( J_k(x^k) -  J_k(\optx^k)\right).
\end{equation*}
This regret may be negative, if the \term{comparison set} $\PpredictConstr_{0:N}$ constrains the \term{comparison sequence} $\optx^{0:N}$ more than the predictions and updates constrain the iterates.
Alternatively, performance evaluation based on \emph{asymptotical tracking errors} is available in the literature but its discussion is not included here. For details, interested reader is referred to, e.g., \cite{simonetto2020time}.
For the POPD of \cite{tuomov-predict}, the regret estimate is even weaker: for a function $\breve J_{0:N}$ dependent on both the comparison set $\PpredictConstr_{0:N}$ and the original objectives $J_0,\ldots,J_N$, we only have a bound on
\begin{equation}
    \label{eq:pd:olddynamicregret}
    \sup_{\optx^{0:N} \in \PpredictConstr_{0:N}}\left( \breve J_{0:N}(x^{0:N}) - \sum_{k=0}^N J_k(\optx^k)\right).
\end{equation}
That is, the regret estimate is non-symmetric between the algorithmic iterates and the comparison sequences.

Due to proof-technical reasons, the dual predictor of the POPD in \cite{tuomov-predict} is also severely constrained to a specific proximal form.
In \cref{sec:pd} of this paper, through improved and much simplified proofs, we (a) remove this restrictions, (b) provide improved, symmetric, regret estimates and, (c) extend the POPD with an additional forward step with respect to $E_k$ (which was zero in \cite{tuomov-predict}). Specifically, our new form of dynamic regret for primal-dual methods symmetrises \eqref{eq:pd:olddynamicregret} by replacing both $\breve{J_k}$ and $J_k$ with a temporal “sub-infimal” convolution $\mathring J_k$ between the dual comparison set and the objective. The bound for this modified dynamic regret depends on the richness of the comparison set and the accuracy of the predictors.
Given the relaxed conditions on the dual predictor, in \cref{sec:predictors}, we analyse the accuracy of a broad class of “pseudo-affine” primal-dual predictors.
We then present various examples of pseudo-affine predictors in the context of optical flow, and how they preserve salient relationships between the primal and dual variables. Finally in \cref{sec:numerics}, we evaluate the proposed method and predictors numerically on image stabilisation and dynamic PET reconstruction.

\subsection*{Notation}

We write $x^{n:m} \defeq (x^n,\ldots,x^m)$ with $n \le m$, and $x^{n:\infty} \defeq (x^n,x^{n+1},\ldots)$. We slice a set $\PpredictConstr \subset \prod_{k=0}^\infty X_k$ as $\PpredictConstr_{n:m} \defeq \{x^{n:m} \mid x^{0:\infty} \in \PpredictConstr\}$ and $\PpredictConstr_n \defeq \PpredictConstr_{n:n}$.
We write $\linear(X; Y)$ for the set of bounded linear operators between (Hilbert) spaces $X$ and $Y$, and $\Id \in \linear(X; X)$ for the identity operator.
For brevity, we write $\iprod{x}{y}_M \defeq \iprod{Mx}{y}$, and $\fakenorm{x}_M \defeq \sqrt{\iprod{x}{x}}_M$ for $M \in \linear(X; X)$.
We write $M \ge 0$ if $M$ is positive semi-definite and $M \simeq N$ if $\iprod{Mx}{x} = \iprod{Nx}{x}$ for all $x$.
When $M$ is positive semi-definite, we use the usual norm notation $\norm{x}_M \defeq \sqrt{\iprod{x}{x}_M}$.

For any $A \subset X$ and $x \in X$ we set
$
    \iprod{A}{x} \defeq \{\iprod{z}{x} \mid z \in A\}.
$
We write $\delta_A$ for the $\{0,\infty\}$-valued indicator function of $A$.
For any $B \subset \R$ (in particular $B=\iprod{A}{x}$), we use the notation $B \ge 0$ to mean that $t \ge 0$ for all $t \in B$.

For $F: X \to (-\infty, \infty]$, the effective domain $\Dom F \defeq \{ x \in X \mid F(x) < \infty\}$.
With $\extR \defeq [-\infty, \infty]$ the set of extended reals, we call $F: X \to \extR$ \term{proper} if $F>-\infty$ and $\Dom F \ne \emptyset$.
Let then $F$ be convex. We write $\subdiff F(x)$ for the subdifferential at $x$ and (for additionally proper and lower semicontinuous $F$)
\[
    \prox_{F}(x) \defeq \argmin_{\alt x \in X}~ F(\alt x) + \frac{1}{2}\norm{\alt x-x}^2
    = \inv{(\Id + \subdiff F)}(x)
\]
for the proximal map. We call $F$ strongly subdifferentiable at $x$ with the factor $\gamma>0$ if
\[
    F(\alt x)-F(x) \ge \iprod{z}{\alt x-x} + \frac{\gamma}{2}\norm{\alt x-x}^2
    \quad \text{for all}\quad z \in \subdiff F(x) \text{ and } \alt x \in X.
\]
In Hilbert spaces, this is equivalent to strong convexity with the same factor. Finally, for $f \in L^q(\Omega; \R^n)$, we write
$
    \norm{f}_{p,q} \defeq \adaptnorm{\xi \mapsto \norm{f(\xi)}_p}_{L^q(\Omega)}.
$

\section{An online primal-dual method}
\label{sec:pd}

\begin{algorithm}
    \caption{New predictive online primal-dual proximal splitting (POPD$_2$)}
    \label{alg:pd:alg}
    \begin{algorithmic}[1]
        \Require For all $k \in \N$, on Hilbert spaces $X_k$ and $Y_k$, convex, proper, lower semi-continuous $F_{k+1}, E_{k+1}: X_{k+1} \to \extR$ and $G_{k+1}^* : Y_{k+1} \to \extR$, primal-dual predictors $P_k: X_k \times Y_k \to X_{k+1} \times Y_{k+1}$, and $K_{k+1} \in \linear(X_{k+1}; Y_{k+1})$.
        Step length parameters $\tau_{k+1},\sigma_{k+1}>0$.
        \State Pick initial iterates $x^0 \in X_0$ and $y^0 \in Y_0$.
        \For{$k \in \N$}
        \State\label{item:alg:pd:pdpredict}$(\nexxt\primalpredict, \nexxt\dualpredict) \defeq P_k(\thisx, \thisy)$.
        \Comment{prediction step}
        \State\label{item:alg:pd:primal}$\nextx \defeq \prox_{\tau_{k+1} F_{k+1}}(\nexxt{\primalpredict} - \tau_{k+1}\nabla E_{k+1}(\nexxt{\primalpredict}) - \tau_{k+1} K_{k+1}^*\nexxt{\dualpredict})$
        \Comment{primal step}
        \State\label{item:alg:pd:dual} $\nexty \defeq \prox_{\sigma_{k+1} G_{k+1}^*}(\nexxt{\dualpredict} + \sigma_{k+1} K_{k+1}(2\nextx-\nexxt{\primalpredict}))$
        \Comment{dual step}
        \EndFor
    \end{algorithmic}
\end{algorithm}

In this section we present and analyse our proposed POPD$_2$ primal-dual method for the online solution of \eqref{eq:pd:problem}. Presented in \cref{alg:pd:alg}, the method incorporates an additional forward step with respect to $E_k$, and simplifies the dual prediction of the POPD of \cite{tuomov-predict}.
Each step $k$ of the algorithm corresponds to a single data frame, with the time-varying data embedded in the functions and operators ($F_k$, $E_k$, $K_k$, $G_k$, and $P_k$).\footnote{It is, of course, possible for each of the functions to depend on multiple real data frames simply by treating them as a single frame in the algorithm, and correspondingly taking the spaces $X_k$ and $Y_k$ larger. They can even grow and shrink with $k$.  These possibilities are exploited \cite[Section 5.2]{tuomov-predict}.}
The primal and dual steps (\cref{item:alg:pd:primal,item:alg:pd:dual}) are analogous to the standard PDPS of Chambolle and Pock \cite{chambolle2010first} with an additional forward step with respect to $E_k$.
These optimisation steps are preceded by a prediction step (\cref{item:alg:pd:pdpredict}), implemented by the operators $P_k: X_k \times Y_k \to X_{k+1} \times Y_{k+1}$ that transfer iterates from one step to the next.

In \cref{sec:pd:assumptions} we outline our assumptions. We then present in \cref{sec:pd:dynamicregret} a symmetric dynamic regret bound, as discussed in the introduction.

\subsection{Assumptions and definitions}
\label{sec:pd:assumptions}

To develop the regret theory, we work with the testing approach to convergence proofs, presented in \cite{tuomov-proxtest,tuomov-firstorder,clasonvalkonen2020nonsmooth}. This depends on encoding convergence rates into distinct testing parameters for the primal and dual variables. With the general notation  $u = (x,y)$, $\thisu=(\thisx,\thisy)$, etc., we work with the following assumptions.

\begin{assumption}
    \label{ass:pd:main}
    For all $k \ge 1$, on Hilbert spaces $X_k$ and $Y_k$, we are given:
    \begin{enumerate}[label=(\roman*),nosep]
        \item Convex, proper, and lower semicontinuous $F_k, E_k: X_k \to \extR$,  $G_k^*: Y_k \to \extR$, as well as $K_k  \in \mathbb{L}(X_k; Y_k)$, such that $\nabla E_k$ exists and is $L_k$-Lipschitz. We write $Q_k \defeq F_k + E_k$, and $\gamma_{F_k}, \gamma_{E_k}, \rho_k \geq 0$ for the factors of (strong) convexity of $F_k, E_k$ and $G_k^*$, respectively. For some $\kappa_k \in (0,1]$ we have
        \begin{equation}
            \label{eq:fe_gamma}
            0 \le \gamma_k \defeq
            \begin{cases}
                    \gamma_{F_k}+\gamma_{E_k} - \kappa_k L_k, & \gamma_{E_k}>0, \\
                    \gamma_{F_k}, & \gamma_{E_k}=0.
            \end{cases}
        \end{equation}

        \item Primal and dual step length parameters $\tau_k,\sigma_k>0$ and testing parameters $\eta_k,\tauTest_k,\sigmaTest_k > 0$ satisfying
        \begin{subequations}%
            \label{eq:pd:stepconds1}%
            \begin{align}%
                \label{eq:pd:symcond}
                \eta_k & = \tauTest_k\tau_k = \sigmaTest_k \sigma_k,
                && \text{(primal-dual coupling)}
                \\
                \label{eq:pd:primaltestcond-positivity}
                1    & \geq {
                    \tau_k  \kappa_k^{-1} L_k+ \tau_k\sigma_k\norm{K_k}^2}
                && \text{(metric positivity)}.
            \end{align}%
        \end{subequations}

        \item  \label{item:pd:main-online-predictors} Primal-dual predictors $P_k: X_k \times Y_k\to X_{k+1}\times Y_{k+1}$ giving the predictions $(\nexxt\primalpredict, \nexxt\dualpredict) \defeq P_k(\thisx, \thisy)$.

        \item A bounded set $\PDpredictConstr \subset \prod_{k=0}^\infty X_k \times Y_k$ of primal-dual comparison sequences with which we define the set of primal and dual comparison sequences as
        \[
            \begin{aligned}
            \PpredictConstr & \defeq \left\lbrace \optx^{0:\infty} \in {\textstyle \prod\nolimits_{k=0}^\infty X_k} \mid (\bar x^{0:\infty},\bar y^{0:\infty}) \in \PDpredictConstr\right\rbrace
            \quad
            \text{and}
            \\
            \DpredictConstr & \defeq \left\lbrace \bar y^{0:\infty} \in {\textstyle \prod\nolimits_{k=0}^\infty Y_k} \mid  (\bar x^{0:\infty},\bar y^{0:\infty}) \in \PDpredictConstr\right\rbrace.
            \end{aligned}
        \]
    \end{enumerate}
\end{assumption}

\begin{example}
    \label{ex:pd:noaccel}
    Similarly to the standard PDPS, as analysed in \cite{tuomov-proxtest,clasonvalkonen2020nonsmooth}, for an unaccelerated method, we can choose the step length and testing parameters as $\tau_k \equiv \tau$ and $\sigma_k \equiv \sigma$ for some $\tau, \sigma > 0$, along with $\eta_k \equiv \tau$, $\tauTest_k \equiv 1$ and $\sigmaTest_k \equiv \frac{\tau}{\sigma}$.
    For an accelerated method, more elaborate choices are needed.
\end{example}

For all $k \ge 1$, we define $\Precond_k , \Gamma_k,  \Omega_{k}  \in \linear(X_k \times Y_k; X_k \times Y_k)$ by
\begin{equation*}
           \Precond_k
                \defeq
                \begin{pmatrix}
                    \inv\tau_k \Id & - K_k^* \\
                    -K_k & \inv\sigma_k \Id
       			\end{pmatrix},\,
            \Gamma_k
             \defeq
            \eta_k
            \begin{pmatrix}
                \gamma_k  \Id & 2K_k^* \\
                -2 K_k & \rho_k \Id
            \end{pmatrix},
            \text{ and }
            \Omega_{k}
            \defeq
            \begin{bmatrix}
                \kappa_k^{-1}L_k & 0\\0&0.
            \end{bmatrix}.
\end{equation*}
We further define for all $k \in \N$ the monotone operator $H_k: X_k \times Y_k \setto X_k \times Y_k$ as
\begin{equation}
    H_k(u) \defeq \begin{pmatrix}
        \subdiff F_k(x) + \nabla E_k(x) + K_k^* y\\
        \subdiff G_k^*(y) - K_k x
    \end{pmatrix}.
\end{equation}
Then $0 \in H_k(\this{\realoptu})$ encodes the first order necessary and sufficient optimality conditions for the static problem \eqref{eq:intro:static-problem} for frame $k$.
Moreover, writing $\thisu \defeq (\thisx,\thisy)$ and $(\nexxt\primalpredict, \nexxt\dualpredict) \defeq P_k(\thisx, \thisy)$, \cref{alg:pd:alg} reads \cite{tuomov-proxtest,he2012convergence} in implicit form
\begin{equation}
    \label{eq:ppext-pdps}
    0 \in \tilde H_k(\thisu) + M_k(\thisu-\this\pdpredict)
    \quad\text{for all}\quad k \ge 1,
\end{equation}
where $\tilde H_k: X_k \times Y_k \setto X_k \times Y_k$ is defined by a modification of $H_k$ as
\begin{equation}
    \label{eq:pd:m}
    \tilde H_k(u) \defeq \begin{pmatrix}
        \subdiff F_k(x) + \nabla E_k(\this{\primalpredict}) + K_k^* y\\
        \subdiff G_k^*(y) - K_k x
    \end{pmatrix}.
\end{equation}
(Alternatively, to avoid introducing $\tilde H_k$, we could replace $M_k$ by a Bregman divergence \cite{tuomov-firstorder}.)

For brevity, with $u^{0:N}=(u^0,\dots,u^N)$, we also write
\begin{align*}
    H_{0:N}(u^{0:N}) & \defeq  H_0(u^0)\times\dots\times H_N(u^N),
    &
    G_{1:N}(y^{1:N}) & \defeq \sum_{k=1}^{N} \eta_k G_{k}(\inv\eta_k \thisy),
    \\
    K_{1:N}x^{1:N} & \defeq (\eta_1 K_{1} x^1, \ldots, \eta_{N} K_N x^N),
    \quad\text{and}
    &
    Q_{1:N}(x^{1:N}) & \defeq \sum_{k=1}^{N} \eta_k [F_{k}  + E_{k}](\thisx).
\end{align*}
In the setting of \cref{ex:pd:noaccel}, $\eta_k \equiv \tau$, so all the functions are simply scaled by the constant primal step length $\tau$.
Observe that $G_{1:N}^*(y^{1:N})=\sum_{k=1}^{N} \eta_k G_{k}^*(\nexty)$ and
\[
    [Q_{1:N}+G_{1:N} \circ K_{1:N}](x^{1:N})
    =\sum_{k=1}^{N} \eta_k[Q_{k} + G_{k} \circ K_{k}](\nextx).
\]

Finally, for each $k$, we define the \emph{Lagrangian duality gap} by
\[
    \begin{aligned}[b]
        \GenGap_{k}^H(\thisu,\this\optu)
        &
        \defeq
    \eta_k \bigl[ F_{k}(\thisx)+E_{k}(\thisx) + \iprod{K_{k}\thisx}{\this\opty} - G_{k}^*(\this\opty) \bigr]
        \\
        \MoveEqLeft[-1]
        -\eta_k\bigl[F_{k}(\this\optx) + E_{k}(\this\optx) + \iprod{K_{k}^*\thisy}{\this\optx} - G_{k}^*(\thisy)
        \bigr].
    \end{aligned}
\]
This is non-negative if $0 \in H_k(\this\optu)$; see, e.g., \cite{clasonvalkonen2020nonsmooth}.

\subsection{A general regret estimate}
\label{sec:pd:dynamicregret}

As we need to develop a dynamic regret theory for \cref{alg:pd:alg}, we first revisit relevant tools to derive meaningful measures of regret.  We first recall the following \term{smoothness three-point inequalities} (on a Hilbert space $X$).

\begin{lemma}[{\cite[Appendix B]{tuomov-proxtest} or \cite[Chapter 7]{clasonvalkonen2020nonsmooth}}]
    \label{lemma:pd:Esmoothness}
    Suppose $E: X \to \extR$ is convex, proper, and lower semicontinuous, and has $L$-Lipschitz gradient. Then
    \begin{gather}
        \label{eq:three-point-smoothness}
        \iprod{\grad E(z)}{x-\optx}
        \ge
        E(x)-E(\optx) -  \frac{L}{2}\norm{x-z}^2
        \quad (\optx, z, x \in X).
    \end{gather}
    If $E$ is, moreover, $\gamma_{E}$-strongly convex, then for any $\beta>0$ and $\optx, z, x \in X$, also
    \begin{equation}
        \label{eq:three-point-smoothness-sc}
            \iprod{\grad E(z)}{x-\optx}
            \ge
            E(x)-E(\optx) + \frac{\gamma_{E}-\beta L^2}{2}\norm{x-\optx}^2
            -\frac{1}{2\beta}\norm{x-z}^2.
    \end{equation}
\end{lemma}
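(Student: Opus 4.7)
For part \eqref{eq:three-point-smoothness}, the plan is to combine the standard descent lemma with convexity. The $L$-Lipschitzness of $\grad E$ gives the descent estimate
\[
    E(x) \le E(z) + \iprod{\grad E(z)}{x-z} + \frac{L}{2}\norm{x-z}^2,
\]
while convexity evaluated between $\optx$ and $z$ yields $E(\optx) \ge E(z) + \iprod{\grad E(z)}{\optx-z}$. Subtracting the second from the first, the $E(z)$ terms cancel and the two inner products combine into $\iprod{\grad E(z)}{x-\optx}$, giving the claim after rearrangement.

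For part \eqref{eq:three-point-smoothness-sc}, the trick is to evaluate strong convexity at the point $x$ rather than at $z$. Strong convexity gives
\[
    E(\optx) \ge E(x) + \iprod{\grad E(x)}{\optx-x} + \frac{\gamma_E}{2}\norm{x-\optx}^2,
\]
hence $\iprod{\grad E(x)}{x-\optx} \ge E(x)-E(\optx) + \tfrac{\gamma_E}{2}\norm{x-\optx}^2$. Then I would decompose
\[
    \iprod{\grad E(z)}{x-\optx} = \iprod{\grad E(x)}{x-\optx} + \iprod{\grad E(z)-\grad E(x)}{x-\optx}
\]
and control the cross term by Young's inequality with parameter $\alpha = 1/(\beta L^2)$, namely
\[
    \iprod{\grad E(z)-\grad E(x)}{x-\optx}
    \ge -\frac{\alpha}{2}\norm{\grad E(z)-\grad E(x)}^2 - \frac{1}{2\alpha}\norm{x-\optx}^2.
\]
Applying the Lipschitz bound $\norm{\grad E(z)-\grad E(x)}^2 \le L^2\norm{z-x}^2$ turns this into $-\tfrac{1}{2\beta}\norm{x-z}^2 - \tfrac{\beta L^2}{2}\norm{x-\optx}^2$, and collecting the quadratic terms in $\norm{x-\optx}^2$ gives precisely the factor $\tfrac{\gamma_E-\beta L^2}{2}$ claimed.

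The only mildly delicate step is the calibration of the Young parameter so that the $\norm{x-z}^2$ term picks up the exact prefactor $\tfrac{1}{2\beta}$ requested by the statement; matching $\alpha L^2 = 1/\beta$ resolves this, and no further estimates are needed. Everything else is purely algebraic bookkeeping from two standard one-line inequalities (descent, respectively strong convexity), so I would keep the write-up to just a few lines per part.
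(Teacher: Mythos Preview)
Your argument is correct in both parts. Note, however, that the paper does not supply its own proof of this lemma: it is stated with a citation to \cite[Appendix B]{tuomov-proxtest} and \cite[Chapter 7]{clasonvalkonen2020nonsmooth} and used immediately afterwards in \cref{corollary:pd:condition}. Your proof is the standard one found in those references---descent lemma plus convexity for \eqref{eq:three-point-smoothness}, and strong convexity at $x$ combined with a Young-plus-Lipschitz bound on the cross term for \eqref{eq:three-point-smoothness-sc}---so there is nothing to compare against in the present paper, and your write-up would serve perfectly well as a self-contained proof.
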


\begin{corollary}
    \label{corollary:pd:condition}
    Let \cref{ass:pd:main} hold. Then, for any $k \in \N$,
    \[
        \iprod{\subdiff F_{k}(\thisx)+\grad E_{k}(\this\primalpredict)}{\thisx-\this\optx}
        \ge
        Q_k(\thisx)-Q_k(\this\optx)
        +\frac{\gamma_{k}}{2}\norm{\thisx-\this\optx}^2
        - \frac{L_k}{2\kappa_k }\norm{\thisx-\this\primalpredict}^2.
    \]
\end{corollary}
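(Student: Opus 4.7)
The plan is to decompose the left-hand side into a subgradient term for $F_k$ and a gradient term for $E_k$, bound each using a convexity/smoothness three-point inequality, and then split along the case distinction in \eqref{eq:fe_gamma}. Because $F_k$ is $\gamma_{F_k}$-strongly convex (hence strongly subdifferentiable), for any $\xi\in\subdiff F_k(\thisx)$ I have
\[
\iprod{\xi}{\thisx-\this\optx}\ge F_k(\thisx)-F_k(\this\optx)+\frac{\gamma_{F_k}}{2}\norm{\thisx-\this\optx}^2.
\]
The remaining task is to bound $\iprod{\grad E_k(\this\primalpredict)}{\thisx-\this\optx}$ appropriately, and the crux will be choosing $\beta$ in \eqref{eq:three-point-smoothness-sc} so that the two contributions combine into exactly $\gamma_k$ as defined in \eqref{eq:fe_gamma} while producing the required $-\tfrac{L_k}{2\kappa_k}\norm{\thisx-\this\primalpredict}^2$ slack.

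In the case $\gamma_{E_k}>0$, I would apply \eqref{eq:three-point-smoothness-sc} with $x=\thisx$, $z=\this\primalpredict$, $\optx=\this\optx$, and the specific choice $\beta=\kappa_k/L_k$. This choice is calibrated so that $\beta L_k^2=\kappa_k L_k$ and $\frac{1}{2\beta}=\frac{L_k}{2\kappa_k}$, producing
\[
\iprod{\grad E_k(\this\primalpredict)}{\thisx-\this\optx}\ge E_k(\thisx)-E_k(\this\optx)+\frac{\gamma_{E_k}-\kappa_k L_k}{2}\norm{\thisx-\this\optx}^2-\frac{L_k}{2\kappa_k}\norm{\thisx-\this\primalpredict}^2.
\]
Summing with the $F_k$ bound, using $Q_k=F_k+E_k$ and the first branch of \eqref{eq:fe_gamma}, delivers exactly the claimed inequality.

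In the case $\gamma_{E_k}=0$, I would instead invoke the non-strongly-convex version \eqref{eq:three-point-smoothness}, obtaining
\[
\iprod{\grad E_k(\this\primalpredict)}{\thisx-\this\optx}\ge E_k(\thisx)-E_k(\this\optx)-\frac{L_k}{2}\norm{\thisx-\this\primalpredict}^2.
\]
Since $\kappa_k\in(0,1]$ implies $L_k/2\le L_k/(2\kappa_k)$, I can weaken this to replace $-L_k/2$ by $-L_k/(2\kappa_k)$ without losing validity. Combining with the strong convexity estimate for $F_k$ and using $\gamma_k=\gamma_{F_k}$ from the second branch of \eqref{eq:fe_gamma} completes the proof.

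There is no real obstacle here; the only non-routine step is identifying the right $\beta$ in the strongly convex case, which is dictated by the need to match $\kappa_k$ appearing in the metric-positivity condition \eqref{eq:pd:primaltestcond-positivity}. Everything else is a direct assembly of strong convexity and smoothness inequalities.
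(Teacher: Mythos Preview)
Your proposal is correct and follows essentially the same approach as the paper: decompose into the $F_k$ strong-convexity inequality and the $E_k$ smoothness three-point inequality, choosing $\beta=\kappa_k/L_k$ in the strongly convex case. You even make explicit the weakening $-\tfrac{L_k}{2}\le-\tfrac{L_k}{2\kappa_k}$ in the $\gamma_{E_k}=0$ case, which the paper leaves implicit in its final appeal to \eqref{eq:fe_gamma}.
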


\begin{proof}
    If $\gamma_{E_{k}}=0$, \eqref{eq:three-point-smoothness} of \cref{lemma:pd:Esmoothness} with the (strong) convexity of $F_{k}$ yields
    \[
        \iprod{\subdiff F_{k}(\thisx)+\grad E_{k}(\this\primalpredict)}{\thisx-\this\optx}
        \ge Q_{k}(\thisx)-Q_{k}(\this\optx)
        +\frac{\gamma_{F_{k}}}{2}\norm{\thisx-\this\optx}^2
        -\frac{L_{k}}{2}\norm{\thisx-\this\primalpredict}^2.
    \]
    If $\gamma_{E_{k}} > 0$, \eqref{eq:three-point-smoothness-sc} of \cref{lemma:pd:Esmoothness}  with $\beta_k=\kappa_k L_k^{-1}$ and the (strong) convexity of $F_{k}$ yield the estimate
    \[
        \begin{aligned}
        \iprod{\subdiff F_{k}(\thisx)+\grad E_{k}(\this\primalpredict)}{\thisx-\this\optx}
        &
        \ge Q_{k}(\thisx)-Q_{k}(\this\optx) +\frac{\gamma_{F_{k}}+\gamma_{E_{k}}-\kappa_kL_k}{2}\norm{\thisx-\this\optx}^2
        -\frac{L_k}{2\kappa_k}\norm{\thisx-\this\primalpredict}^2.
        \end{aligned}
    \]
    In both cases, the claim follows after an application of \eqref{eq:fe_gamma}.
\end{proof}

Using the preceding lemma, the next result bounds the cumulative sum of Lagrangian duality gaps for the iterates of \cref{alg:pd:alg}. This bound is instrumental in proving our main dynamic regret bound, to follow.
The topic of the next \cref{sec:predictors} is to bound the prediction error $e_N$ present in the result.

\begin{lemma}
    \label{thm:pd:main}
    Let \cref{ass:pd:main} hold for $u^{1:N}$ generated by \cref{alg:pd:alg} for an initial $u^0 \in X_0 \times Y_0$. Then $M_k$, $\eta_k\Precond_k+\Gamma_k$ and $\eta_k (M_k -\Omega_k)$ are positive semi-definite, and
    \begin{multline*}
        \frac{1}{2}\norm{u^N-{\optu^N}}_{\eta_{N} \Precond_{N}+\Gamma_{N}}^2
        +
        \sum_{k=1}^{N}\left( \GenGap^H_{k}(\thisu,\this\optu) + \frac{1}{2}\norm{\thisu-\this \pdpredict}_{\eta_{k}(\Precond_{k} - \Omega_{k})}^2\right)
        \le  \frac{1}{2}\norm{u^0-\optu^0}^2_{\eta_0\Precond_0+\Gamma_0} + e_N(u^{0:N-1},\optu^{0:N}),
    \end{multline*}
    for the \term{prediction error}
    \begin{equation}
        \label{eq:pd:estimate-rhs}
        e_N(u^{0:N-1},\optu^{0:N}) \defeq
         \sum_{k=1}^N \bigg(\frac{1}{2}\norm{\this\pdpredict-\this{\optu}}_{\eta_{k}\Precond_{k}}^2 - \frac{1}{2}\norm{\prevu-\prev{\optu}}_{\eta_{k-1}\Precond_{k-1}+\Gamma_{k-1}}^2\bigg).
    \end{equation}
\end{lemma}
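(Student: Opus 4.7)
I would use the testing approach of \cite{tuomov-proxtest,clasonvalkonen2020nonsmooth}. The starting point is the implicit form \eqref{eq:ppext-pdps}: for each $k \ge 1$ there is a specific $\xi_k \in \tilde H_k(u^k)$ with $\xi_k + M_k(u^k - \this\pdpredict) = 0$. Pairing this against $u^k - \this\optu$ in the $\eta_k$-weighted inner product yields
\[
    \eta_k\iprod{M_k(\this\pdpredict - u^k)}{u^k - \this\optu} = \eta_k\iprod{\xi_k}{u^k - \this\optu}.
\]
I would bound the right-hand side below by applying \cref{corollary:pd:condition} to the primal block of $\tilde H_k$ and the strong subdifferentiability of $G_k^*$ (with factor $\rho_k$) to the dual block. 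The function-value differences combine with the bilinear terms $\eta_k\iprod{K_k^* y^k}{x^k - \this\optx} - \eta_k\iprod{K_k x^k}{y^k - \this\opty}$ to reassemble precisely $\GenGap^H_k(u^k,\this\optu)$, while the quadratic residuals collect into $\tfrac{1}{2}\|u^k - \this\optu\|^2_{\Gamma_k} - \tfrac{1}{2}\|u^k - \this\pdpredict\|^2_{\eta_k\Omega_k}$; here one uses that the antisymmetric $\pm 2\eta_k K_k^{(*)}$ off-diagonals of $\Gamma_k$ cancel in the self-inner-product, leaving only the diagonal form $\eta_k\gamma_k\|x^k-\this\optx\|^2 + \eta_k\rho_k\|y^k-\this\opty\|^2$.

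On the left, $M_k$ is symmetric (since $K_k^{**}=K_k$), so the polarisation identity gives
\[
    2\iprod{M_k(\this\pdpredict - u^k)}{u^k - \this\optu} = \|\this\pdpredict - \this\optu\|^2_{M_k} - \|\this\pdpredict - u^k\|^2_{M_k} - \|u^k - \this\optu\|^2_{M_k}.
\]
Multiplying by $\eta_k$ and combining with the lower bound yields the single-step estimate
\[
    \tfrac{1}{2}\|\this\pdpredict - \this\optu\|^2_{\eta_k M_k} \ge \GenGap^H_k(u^k,\this\optu) + \tfrac{1}{2}\|u^k - \this\optu\|^2_{\eta_k M_k + \Gamma_k} + \tfrac{1}{2}\|u^k - \this\pdpredict\|^2_{\eta_k(M_k - \Omega_k)}.
\]
Summing over $k=1,\dots,N$ and adding and subtracting $\tfrac{1}{2}\|u^{k-1}-\prev\optu\|^2_{\eta_{k-1}M_{k-1}+\Gamma_{k-1}}$ on the left makes the $\tfrac{1}{2}\|u^k-\this\optu\|^2_{\eta_k M_k+\Gamma_k}$ terms on both sides telescope. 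What remains on the left is $\tfrac{1}{2}\|u^0-\optu^0\|^2_{\eta_0 M_0+\Gamma_0} - \tfrac{1}{2}\|u^N-\optu^N\|^2_{\eta_N M_N+\Gamma_N}$ together with precisely the prediction error $e_N$ of \eqref{eq:pd:estimate-rhs}, and rearranging delivers the claim.

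For positive semi-definiteness, $M_k \ge 0$ follows by applying Young's inequality to $2|\iprod{K_k^* y}{x}|$ under $\tau_k\sigma_k\|K_k\|^2 \le 1$; for $\eta_k(M_k - \Omega_k)$ the same Young argument uses the stronger inequality $\tau_k\kappa_k^{-1}L_k + \tau_k\sigma_k\|K_k\|^2 \le 1$ from \eqref{eq:pd:primaltestcond-positivity}; finally $\eta_k\Precond_k + \Gamma_k \ge 0$ because the $\gamma_k,\rho_k \ge 0$ contributions only enlarge the diagonal in the self-inner-product. The main bookkeeping obstacle I anticipate is tracking the three independent families of bilinear $K_k$-terms—those arising from $\tilde H_k$, from $M_k$ via polarisation, and from $\Gamma_k$—and verifying that they cooperate: the first rearranges into the gap, the second produces the telescopable pieces, and the third vanishes in the self-inner-product. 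It is exactly this three-way cancellation that makes the telescope close, and it relies on choosing the precise coefficients $2K_k^*,-2K_k$ in $\Gamma_k$ and the primal-dual coupling \eqref{eq:pd:symcond}.
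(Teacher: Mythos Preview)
Your proposal is correct and follows essentially the same route as the paper: test the implicit inclusion \eqref{eq:ppext-pdps} against $u^k-\this\optu$, lower-bound $\eta_k\iprod{\tilde H_k(u^k)}{u^k-\this\optu}$ via \cref{corollary:pd:condition} and the (strong) convexity of $G_k^*$ to produce $\GenGap_k^H + \tfrac12\fakenorm{u^k-\this\optu}^2_{\Gamma_k} - \tfrac12\norm{u^k-\this\pdpredict}^2_{\eta_k\Omega_k}$, then apply the three-point (Pythagoras/polarisation) identity in the $\eta_k M_k$-form and sum. Your positive-semidefiniteness arguments are also in line with the paper's (indeed your observation that the antisymmetric off-diagonals of $\Gamma_k$ vanish in the self-inner-product, so $\eta_k M_k+\Gamma_k \ge \eta_k M_k \ge 0$, is slightly slicker than the explicit Young bound the paper writes out).
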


\begin{proof}
    For brevity, and to not abuse norm notation when $\Gamma_k$ is not positive semi-definite, we write $\fakenorm{x}_{\Gamma_k}^2 \defeq \iprod{x}{x}_{\Gamma_k}$. By Young's inequality, we have
    \[
        \eta_k M_k - \eta_k \Omega_k
        =
        {\eta_k}{}
        \begin{pmatrix}
            (\inv\tau_k - \kappa_k^{-1}L_k)  \Id & - K_k^* \\
            - K_k & \inv\sigma_k \Id
        \end{pmatrix}
        \ge
        \phi_k
        \begin{pmatrix}
            \Id - \tau_k \kappa_k^{-1}L_k - \tau_k\sigma_k K_k^*K_k & 0 \\
            0 & 0
        \end{pmatrix}\\
    \]
    and
    \[
        \eta_{k}\Precond_{k}+\Gamma_{k}
        \simeq
        \begin{pmatrix}
            \tauTest_k(1+\gamma_{k}\tau_k) \Id & -\eta_k K_k^*\\
            -\eta_k K_k & \sigmaTest_k(1+\rho_{k}\sigma_k) \Id
        \end{pmatrix}
        \ge
        \begin{pmatrix}
            \tauTest_k(1+\gamma_k\tau_k) \Id - \frac{\eta_k^2}{\sigmaTest_{k}(1+\rho_k\sigma_k)} K_k^*K_k & 0 \\
            0 & 0
        \end{pmatrix}.
    \]
    Thus, \eqref{eq:pd:primaltestcond-positivity} establishes the positive semi-definiteness claims.

    We then expand
    \[
        \begin{aligned}[t]
            \eta_k\iprod{\tilde H_k(\thisu)}{\thisu-\this{\optu}}
            &
            = \eta_k\iprod{\subdiff{F_k(\thisx)}}{\thisx - \this\optx} + \eta_k \iprod{\nabla E_k(\this\primalpredict)}{\thisx - \this\optx}
            \\
            \MoveEqLeft[-1]
            + \eta_k\iprod{\subdiff G_k^*(\thisy)}{\thisy-\this\opty}
            \\
            \MoveEqLeft[-1]
            + \eta_k\iprod{K_k^*\thisy}{\thisx-\this\optx}
            - \eta_k\iprod{K_k\thisx}{\thisy - \this\opty}.
        \end{aligned}
    \]
    Thus the (strong) convexity of $F_k$ and $G^{\ast}_k$ together with \cref{corollary:pd:condition} yields
    \begin{gather}
        \label{eq:pdps:condition}
        \begin{aligned}[b]
            \eta_k\iprod{\tilde H_k(\thisu)}{\thisu-\this{\optu}}
            &
             \ge \eta_k\left(F_k(\thisx) - F_k(\this\optx) + \frac{\gamma_k}{2}\|\thisx - \this\optx\|^2\right)
            \\
            \MoveEqLeft[-1]
            + \eta_k\left(E_k(\thisx) - E_k(\this\optx) - \frac{L_k}{2\kappa_k}\|\this\primalpredict - \thisx\|^2 \right)
            \\
            \MoveEqLeft[-1]
            + \eta_k\left(G_k^* (\thisy) - G_k^*(\this\opty) + \frac{\rho_k}{2}\|\thisy-\this\opty\|^2\right)
            \\
            \MoveEqLeft[-1]
            - \eta_k\iprod{K_k^*\thisy}{\this\optx} + \eta_k \iprod{K_k\thisx}{\this\opty}
            \\
            &
            = \frac{1}{2}\fakenorm{\thisu-\this\optu}_{\Gamma_k}^2 + \GenGap^H_k(\thisu,\this\optu)  - \frac{1}{2}\|\thisu-\this\pdpredict\|_{\eta_k\Omega_k}.
        \end{aligned}
    \end{gather}
    Following the testing methodology \cite{clasonvalkonen2020nonsmooth,tuomov-proxtest}, we apply the linear “testing operator” $\iprod{\freevar}{\thisu-\this{\optu}}_{\eta_k}$ to both sides of \eqref{eq:ppext-pdps}. This followed by \eqref{eq:pdps:condition} yields
    \[
        0 \ge
        \iprod{\thisu-\this\pdpredict}{\thisu-\this{\optu}}_{\eta_k\Precond_k}
        + \frac{1}{2}\fakenorm{\thisu-\this\optu}_{\Gamma_k}^2
        + \GenGap^H_k(\thisu,\this\optu)
        - \frac{1}{2}\|\thisu-\this\pdpredict\|_{\eta_k\Omega_k}
        \quad (k=1,\ldots,N).
    \]
    Pythagoras' identity for the inner product and norm with respect to the operator $\eta_k\Precond_k$ now yields
    \[
        \frac{1}{2}\norm{\this\pdpredict-\this{\optu}}_{\eta_k \Precond_k}^2
        \ge
        \frac{1}{2}\norm{\thisu-\this{\optu}}_{\eta_k \Precond_k+\Gamma_k}^2
        + \GenGap^H_k(\thisu,\this\optu)
        + \frac{1}{2}\norm{\thisu-\this\pdpredict}_{\eta_k(\Precond_k - \Omega_k)}^2
        \quad (k=1,\ldots,N).
    \]
    Summing this over $k\in \lbrace 1,\dots, N \rbrace$, we obtain the claim.
\end{proof}

We are now almost ready to state our main result regarding the dynamic regret of our algorithm.
To proceed, we define the function $\mathring G_{1:N}$ by
\[
    \mathring G_{1:N}(z^{1:N})
    \defeq
    \sup_{\tilde y^{1:N} \in  \DpredictConstr_{1:N}}
    \bigl[
        \iprod{z^{1:N}}{\tilde y^{1:N}} - G_{1:N}^*(\tilde y^{1:N})
    \bigr].
\]
If the dual comparison sets $\DpredictConstr_{1:N}$ were convex, then, recalling the formula $(f_1 + f_2)^* = f_1^* \infconv f_2^*$ for infimal convolutions (denoted $\infconv$) of convex functions $f_1$ and $f_2$, we would have
$
    \mathring G_{1:N} = G_{1:N} \infconv \delta_{\DpredictConstr_{1:N}}^*.
$
In general,
$\mathring G_{1:N} \le G_{1:N} \infconv \delta_{\DpredictConstr_{1:N}}^*$.

It is worth noting that under suitable assumption on the comparison sequence, $\breve G = \mathring G$; see \cite[Example~3.4]{tuomov-predict}. Moreover, if $\DpredictConstr_{1:N}=\prod_{k=1}^N Y_k$, or even just $\DpredictConstr_{1:N} \supset \Dom G_{1:N}^*$, then it is also clear that $\mathring G_{1:N} = G_{1:N}$.
In this case the next theorem provides a dynamic regret bound with respect to the original static objective $Q_{1:N} + G_{1:N} \circ K_{1:N}$ for the first $N$ frames. Otherwise, providing bounds on $Q_{1:N} + \mathring G_{1:N} \circ K_{1:N}$, it modifies the objective by “sub-infimally” convolving $G_{1:N}$ with the temporal evolution constraints presented by the set of dual comparison sequences $\DpredictConstr_{1:N}$. Typically $G_{1:N} \circ K_{1:N}$ would be a sum of independent static regulariser for the temporal frames $1$ to $N$, for example, a sum of independent total variation terms for each frame. Then $\mathring G_{1:N}$ would be a temporally convolved total variation regulariser.

Compared to the “$G$-banana” $\breve G_{1:N}$ of \cite{tuomov-predict}, the “$G$-doughnut” $\mathring G_{1:N}$ has a significantly simpler structure.
Moreover, the following new result is symmetric, whereas the previous results of \cite{tuomov-predict} were unsymmetric, employing $\breve G_{1:N}$ for the iterates $x^{1:N}$, and the original $G_{1:N}$ for the comparison sequences $\optx^{1:N}$. Generally, for the right hand side of the next main regret estimate to be meaningful, it is necessary for the set of comparison sequences to be bounded.

\begin{theorem}
    \label{thm:pd:regret_estimate}
    Let $N \ge 1$, and suppose \cref{ass:pd:main} hold for $u^{1:N}$ generated by \cref{alg:pd:alg} for an initial $u^0 \in X_0 \times Y_0$.
    Then
    \begin{multline}
        \label{eq:pd:regret_estimate}
        [Q_{1:N}(x^{1:N}) + \mathring G_{1:N}(K_{1:N}x^{1:N})]
        - \sup_{\optx^{1:N} \in  \PpredictConstr_{1:N}}
            [Q_{1:N}(\optx^{1:N}) + \mathring G_{1:N}(K_{1:N}\optx^{1:N})]
        \\
        \le
        \sup_{\optu^{0:N}\in  \PDpredictConstr_{0:N}} \bigg( \frac{1}{2}\norm{u^0-\optu^0}^2_{\eta_0\Precond_0+\Gamma_0} + c_N(\optx^{1:N}, y^{1:N}) + e_N(u^{0:N-1},\optu^{0:N}) \bigg),
    \end{multline}
    where the prediction error $e_N(u^{0:N-1},\optu^{0:N})$ is given by \eqref{eq:pd:estimate-rhs}, and the \term{comparison set solution discrepancy}
    \[
        c_N(\optx^{1:N}, y^{1:N}) \defeq  \inf_{\tilde y^{1:N} \in \DpredictConstr_{1:N}} \iprod{K_{1:N}\optx^{1:N}}{y^{1:N}-\tilde y^{1:N}} + G_{1:N}^*(\tilde y^{1:N}) - G_{1:N}^*(y^{1:N}).
    \]
\end{theorem}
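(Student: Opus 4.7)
The plan is to apply \cref{thm:pd:main}, rewrite the cumulative Lagrangian gap using $\mathring G_{1:N}$ and $c_N$, and then take compatible suprema over $\optu^{0:N}\in\PDpredictConstr_{0:N}$.

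First, I would discard the two positive-semi-definite quantities on the left-hand side of \cref{thm:pd:main}, which retains the bound $\sum_{k=1}^N \GenGap_k^H(\thisu,\this\optu) \le A(\optu^{0:N})$ with $A(\optu^{0:N}) \defeq \tfrac{1}{2}\norm{u^0-\optu^0}^2_{\eta_0\Precond_0+\Gamma_0} + e_N(u^{0:N-1},\optu^{0:N})$. Expanding the gap in the aggregate notation $Q_{1:N}$, $G_{1:N}^*$, $K_{1:N}$ yields, for every $\optu^{0:N}\in\PDpredictConstr_{0:N}$,
\[
    Q_{1:N}(x^{1:N}) + \iprod{K_{1:N}x^{1:N}}{\opty^{1:N}} - G_{1:N}^*(\opty^{1:N})
    \le Q_{1:N}(\optx^{1:N}) + \iprod{K_{1:N}\optx^{1:N}}{y^{1:N}} - G_{1:N}^*(y^{1:N}) + A(\optu^{0:N}).
\]

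The key algebraic move is the identity
\[
    \iprod{K_{1:N}\optx^{1:N}}{y^{1:N}} - G_{1:N}^*(y^{1:N})
    = \mathring G_{1:N}(K_{1:N}\optx^{1:N}) + c_N(\optx^{1:N}, y^{1:N}),
\]
which follows by simply recognising that $c_N$ is exactly the defect by which the pairing on the left falls short of the supremum defining $\mathring G_{1:N}(K_{1:N}\optx^{1:N})$ over $\tilde y^{1:N}\in\DpredictConstr_{1:N}$. Substituting this identity into the previous bound and observing that, for each $\optu^{0:N}\in\PDpredictConstr_{0:N}$, the primal slice $\optx^{1:N}$ lies in $\PpredictConstr_{1:N}$ (so $Q_{1:N}(\optx^{1:N}) + \mathring G_{1:N}(K_{1:N}\optx^{1:N})$ is at most the primal supremum appearing in \eqref{eq:pd:regret_estimate}) while $c_N(\optx^{1:N}, y^{1:N}) + A(\optu^{0:N})$ is at most its own supremum over $\optu'^{0:N}\in\PDpredictConstr_{0:N}$, produces a bound whose right-hand side is now independent of $\optu^{0:N}$.

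To finish, I would take the supremum over $\optu^{0:N}\in\PDpredictConstr_{0:N}$ of the left-hand side. Since that expression depends on $\optu^{0:N}$ only through $\opty^{1:N}$, and $\DpredictConstr_{1:N}$ is by definition the projection of $\PDpredictConstr_{0:N}$ onto the dual coordinates, the definition of $\mathring G_{1:N}$ gives
\[
    \sup_{\optu^{0:N}\in\PDpredictConstr_{0:N}} \bigl[\iprod{K_{1:N}x^{1:N}}{\opty^{1:N}} - G_{1:N}^*(\opty^{1:N})\bigr]
    = \mathring G_{1:N}(K_{1:N}x^{1:N}),
\]
and rearranging delivers \eqref{eq:pd:regret_estimate}. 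The main conceptual obstacle is that $\PDpredictConstr_{0:N}$ couples primal and dual comparison sequences, so a joint supremum cannot be split into independent primal and dual suprema. The remedy, encoded in the order of operations above, is to bound the primal bracket $Q_{1:N}(\optx^{1:N}) + \mathring G_{1:N}(K_{1:N}\optx^{1:N})$ by its primal supremum \emph{before} passing to the dual supremum on the left, thereby decoupling the two.
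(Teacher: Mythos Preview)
Your proposal is correct and follows essentially the same route as the paper: both invoke \cref{thm:pd:main}, establish the identity $\iprod{K_{1:N}\optx^{1:N}}{y^{1:N}} - G_{1:N}^*(y^{1:N}) = \mathring G_{1:N}(K_{1:N}\optx^{1:N}) + c_N(\optx^{1:N}, y^{1:N})$, and then decouple the primal and dual suprema. The only cosmetic difference is that the paper takes the supremum over $\optu^{0:N}$ on both sides simultaneously and then uses $\sup(f-g)\ge\sup f-\sup g$ on the left, whereas you first bound the right-hand terms by their suprema and then take the supremum on the left; these are equivalent reorderings of the same argument.
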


\begin{proof}
    Let $\optu^{1:N} \in \PDpredictConstr_{1:N}$.
    For any $\tilde y^{1:N} \in \DpredictConstr_{1:N}$, by the definition of $\mathring G_{1:N}$ as a Fenchel conjugate of $G_{1:N}^* + \delta_{\DpredictConstr_{1:N}}$, we have
    \begin{equation}
        \label{eq:pd:regret_estimate:cn:0}
        \begin{aligned}[t]
        \iprod{K_{1:N}\optx^{1:N}}{y^{1:N}} - G_{1:N}^*(y^{1:N})
        &
        =
        \inf_{\tilde y^{1:N} \in \DpredictConstr_{1:N}}\left(
        \iprod{K_{1:N}\optx^{1:N}}{y^{1:N}-\tilde y^{1:N}} - G_{1:N}^*(y^{1:N})
        + G_{1:N}^*(\tilde y^{1:N})
        \right)
        \\
        \MoveEqLeft[-1]
        + \mathring G_{1:N}(K_{1:N}\optx^{1:N})
        \\
        &
        =
        c_N(\optx^{1:N}, y^{1:N})
        + \mathring G_{1:N}(K_{1:N}\optx^{1:N}).
        \end{aligned}
    \end{equation}

    With this, we rearrange
    \begin{equation*}
        \begin{aligned}[t]
        \sum_{k=1}^{N} \GenGap_{k}^H(\thisu,\this\optu)
        &
        =
        \sum_{k=1}^{N} \eta_k \Bigl(\bigl[F_{k}(\thisx)+E_{k}(\thisx) + \iprod{K_{k}\thisx}{\this\opty} - G_{k}^*(\this\opty)\bigr]
        \\
        \MoveEqLeft[-5]
        - \bigl[F_{k}(\this\optx) + E_{k}(\this\optx) + \iprod{K_{k}^*\thisy}{\this\optx} - G_{k}^*(\thisy))
        \bigr]\Bigr)
        \\
        &
        =
            \bigl[
                Q_{1:N}(x^{1:N}) + \iprod{K_{1:N}x^{1:N}}{\opty^{1:N}} - G_{1:N}^*(\opty^{1:N})
            \bigr]
        \\
        \MoveEqLeft[-5]
            -
            \bigl[
                Q_{1:N}(\optx^{1:N}) + \iprod{K_{1:N}\optx^{1:N}}{y^{1:N}} - G_{1:N}^*(y^{1:N})
            \bigr]
        \\
        &
        =
            \bigl[
                Q_{1:N}(x^{1:N}) + \iprod{K_{1:N}x^{1:N}}{\opty^{1:N}} - G_{1:N}^*(\opty^{1:N})
            \bigr]
        \\
        \MoveEqLeft[-5]
            -
            \bigl[
                Q_{1:N}(\optx^{1:N}) +  c_N(\optx^{1:N}, y^{1:N})
                + \mathring G_{1:N}(K_{1:N}\optx^{1:N})
            \bigr].
        \end{aligned}
    \end{equation*}
    Using this equation in the claim of \cref{thm:pd:main} readily establishes that
    \begin{equation}
        \label{eq:pd:regret_Gdoughnut:1}
        D_N(\optu^{1:N})
        \le
        \frac{1}{2}\norm{u^0-\optu^0}^2_{\eta_0\Precond_0+\Gamma_0}  +c_N(\optx^{1:N}, y^{1:N}) + e_N(u^{0:N-1},\optu^{0:N}).
    \end{equation}
    for
    \[
        D_N(\optu^{1:N}) \defeq
        Q_{1:N}(x^{1:N}) + \iprod{K_{1:N}x^{1:N}}{\opty^{1:N}} - G_{1:N}^*(\opty^{1:N})
        - Q_{1:N}(\optx^{1:N}) - \mathring G_{1:N}(K_{1:N}\optx^{1:N}).
    \]
    We have
    \[
        \begin{aligned}[b]
            \sup_{\optu^{0:N} \in  \PDpredictConstr_{0:N}} D_N(\optu^{1:N})
            &
            =
            \sup_{\optu^{1:N} \in  \PDpredictConstr_{1:N}} D_N(\optu^{1:N})
            \\
            &
            \ge
            \sup_{\optu^{1:N} \in  \PDpredictConstr_{1:N}}  \Bigl(
            Q_{1:N}(x^{1:N}) + \iprod{K_{1:N}x^{1:N}}{\opty^{1:N}} - G_{1:N}^*(\opty^{1:N})
            \Bigr)
            \\
            \MoveEqLeft[-1]
            -
            \sup_{\optu^{1:N} \in  \PDpredictConstr_{1:N}}  \Bigl(
                Q_{1:N}(\optx^{1:N}) + \mathring G_{1:N}(K_{1:N}\optx^{1:N})
            \Bigr)
            \\
            &
            =
            Q_{1:N}(x^{1:N}) + \mathring G_{1:N}(K_{1:N}x^{1:N})
            -
            \sup_{\optx^{1:N} \in  \PpredictConstr_{1:N}}  \Bigl(
                Q_{1:N}(\optx^{1:N}) + \mathring G_{1:N}(K_{1:N}\optx^{1:N})
            \Bigr).
        \end{aligned}
    \]
    Therefore, the claim follows by taking the supremum over $\optu^{0:N} \in \PDpredictConstr_{0:N}$ in \eqref{eq:pd:regret_Gdoughnut:1}.
 \end{proof}

\begin{remark}[Comparison set solution discrepancy]
    \label{rem:pd:solution-set-disrepancy}
    The comparison set solution discrepancy $c_N$ is the price for symmetricity in \cref{thm:pd:regret_estimate}, as compared to \cite{tuomov-predict}. It is non-positive and therefore can be made to disappear from \eqref{eq:pd:regret_estimate} if
    \begin{equation}
        \label{eq:pd:cn-zero-cond}
        G_{1:N}(K_{1:N}\optx^{1:N}) \le \iprod{K_{1:N}\optx^{1:N}}{\tilde y^{1:N}} - G_{1:N}^*(\tilde y^{1:N})
        \quad\text{for some}\quad
        \tilde y^{1:N} \in \DpredictConstr_{1:N},
    \end{equation}
    since the definition of the Fenchel biconjugate establishes $\iprod{K_{1:N}\optx^{1:N}}{y^{1:N}} - G_{1:N}^*(y^{1:N}) \le G_{1:N}(K_{1:N}\optx^{1:N})$.
    The condition \eqref{eq:pd:cn-zero-cond} also establishes
    \[
        \mathring G_{1:N}(K_{1:N}\optx^{1:N})=G_{1:N}(K_{1:N}\optx^{1:N}).
    \]

    Alternatively, if $\DpredictConstr_{1:N}$ is large enough, it may be possible to make $c_N$ small.
    In particular, if the algorithm-generated iterates $y^{1:N}$ are in $\DpredictConstr_{1:N}$, we have $c_N(\optx^{1:N}, y^{1:N})=0$.
    In our applications of interest, this seems, however, unlikely.
    Instead, in the following \cref{sec:predictors}, we will relate \eqref{eq:pd:cn-zero-cond} to total variation preserving predictors and true temporal couplings, taking $\tilde y^{1:N}$ such that $(\optx^{1:N}, \tilde y^{1:N}) \in \PDpredictConstr_{1:N}$.
\end{remark}

\section{Pseudo-affine predictors}
\label{sec:predictors}

Our first purpose in this section is to estimate in \cref{ssec:overallbounds} the prediction errors $e_N$, defined in \eqref{eq:pd:estimate-rhs}, for a class of what we call \textit{pseudo-affine predictors}.
We then provide examples of such predictors in \cref{ssec:TV_predictors,ssec:IP_predictors,ssec:dualscaling}, based on enforcing the preservation of salient relationships between the primal and dual variables.
Roughly speaking, these pointwise total variation and angle-preserving predictors model the fact that new shapes, hence significant changes in local total variation or angles, can only emerge near the image boundaries, i.e., in a set that has a small measure and hence a small effect on the prediction error.

In \cref{alg:pd:alg}, the time evolution of primal and dual variables is described by the predictors $P_k: X_k \times Y_k \to X_{k+1} \times Y_{k+1}$ where we wrote $ (\nexxt\primalpredict,\nexxt\dualpredict)=P_k(\thisx,\thisy)$ for all $k \in \N$. Throughout this section, we assume that
    \begin{equation}
        \label{eq:pseudolinear_form}
        P_k(\thisx,\thisy) = (W_k\thisx + a_{k+1}, T_k\thisy + b_{k+1})
    \end{equation}
 for some $W_k \in \linear(X_k;X_{k+1})$, $T_k \in \linear(Y_k; Y_{k+1})$, $a_k \in X_{k+1}$, $b_k \in Y_{k+1}$. Note that although $P_k$ is assumed affine, it may hide nonlinear dependencies through the dependence on the iteration $k$. This generality allows $W_k$ or $T_k$ to have nonlinear factors that also depend on the primal variable (as elucidated in the examples we consider for our purpose). We  take the same form of primal-dual temporal coupling operators $\opt{P}_k$ for some  $\opt{W}_k \in \linear(X_k;X_{k+1})$, $\opt{T}_k \in \linear(Y_k; Y_{k+1})$,  $\opt{a}_{k+1} \in X_{k+1}$ and $\opt{b}_{k+1} \in Y_{k+1}$ to describe the evolution of variables in the comparison sequence. More succinctly, for some family $\mathcal{P}$ of sequences of primal-dual temporal coupling operators $\{\tilde{P}_k\}_{k=0}^{\infty}$, we define the set of comparison sequences
\begin{gather*}%
    \PDpredictConstr \defeq
    \left\{
    \textstyle \optu^{0:\infty} \in \prod_{k=0}^\infty X_k \times Y_k \mid \optu^0  = (\optx^0,\opty^0) \in \PDpredictConstr _0, \nexxt\optu = \tilde{P}_k(\this\optx,\this\opty), \, (\tilde{P}_k)_{k=0}^{\infty} \in \mathcal{P}
    \right\}.
\end{gather*}%

\subsection{Prediction bounds and penalties}
\label{ssec:overallbounds}

Given the general form \eqref{eq:pseudolinear_form} of primal-dual predictors and temporal coupling operators, we first derive respective bounds for primal and dual predictions. This entails computing  (Lipschitz-like) factors $\Lambda_k, \Theta_k>0$ and prediction penalties $\epsilon_{k+1}, \tilde{\epsilon}_{k+1} \geq 0$ as in the next lemma.

\begin{lemma}
    \label{lemma:primaldualbounds}
    For all $k\in \N$ and for some $M_{\optx}, M_{\opty}>0$, let $\this\optx \in X_k$ with $\norm{\this\optx}_{X_k}^2 \leq M_{\optx}$, $\this\opty \in Y_k$ with $\norm{\this\opty}_{Y_k}^2 \leq M_{\opty}$,
    \begin{align*}
        (\nexxt\primalpredict,\nexxt\dualpredict)&=P_k(\thisx,\thisy) = \bigl(W_k \thisx + a_{k+1}, T_k \thisy + b_{k+1} \bigr),
        \quad\text{ and  }
        \\
        (\nexxt\optx,\nexxt\opty)&=\opt{P}_k(\this\optx,\this\opty) = \bigl(\opt{W}_k \this\optx + \opt{a}_{k+1}, \opt{T}_k \this\opty + \opt{b}_{k+1} \bigr)
    \end{align*}
    for a fixed $\{\opt{P}_k\}_{k=0}^{\infty} \in \mathcal{P}$, $W_k, \opt{W}_k \in \linear(X_k;X_{k+1})$,  $T_k, \opt{T}_k \in \linear(Y_k; Y_{k+1})$,  $a_{k+1},\opt{a}_{k+1} \in X_{k+1}$, and $b_{k+1}, \opt{b}_{k+1} \in Y_{k+1}$.  Then for any $\paffinecontrol >0$ and $\Lambda_k > \norm{W_k}^2$,
    \begin{gather*}
        \frac{1}{2}\norm{\nexxt\primalpredict - \nexxt\optx}_{X_{k+1}}^2 \leq \frac{\Lambda_k}{2}\norm{\thisx - \this\optx}_{X_{k}}^2 + \epsilon_{k+1}
    \shortintertext{where}
        \epsilon_{k+1}
        \defeq
        \frac{\Lambda_k M_{\optx}(1 + \paffinecontrol)}{\Lambda_k - \norm{W_k}^2} \norm{W_k-\opt{W}_k}^2
        + \frac{\Lambda_k(1+\paffinecontrol^{-1})}{\Lambda_k - \norm{W_k}^2}\norm{a_{k+1} - \opt{a}_{k+1}}^2.
    \end{gather*}
    Similarly, for any $\daffinecontrol>0$ and $\Theta_k > \norm{T_k}^2$,
    \begin{gather*}
        \frac{1}{2}\norm{\nexxt\dualpredict - \nexxt\opty}_{Y_{k+1}}^2 \leq \frac{\Theta_k}{2}\norm{\thisy - \this\opty}_{Y_{k}}^2 + \tilde{\epsilon}_{k+1}
    \shortintertext{where}
        \tilde{\epsilon}_{k+1}
        \defeq
        \frac{\Theta_k M_{\opty}(1+\daffinecontrol)}{\Theta_k - \norm{T_k}^2}\norm{T_k-\opt{T}_k}^2
        + \frac{\Theta_k(1+\daffinecontrol^{-1})}{\Theta_k - \norm{T_k}^2}\norm{b_{k+1}-\opt{b}_{k+1}}^2.
    \end{gather*}
\end{lemma}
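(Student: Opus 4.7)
The plan is a routine two-step application of Young's inequality. I will spell out the primal bound, as the dual one follows by the same argument with $T_k,\opt{T}_k,b_{k+1},\opt{b}_{k+1},\daffinecontrol,\Theta_k,M_{\opty}$ replacing $W_k,\opt{W}_k,a_{k+1},\opt{a}_{k+1},\paffinecontrol,\Lambda_k,M_{\optx}$.

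First, I insert $\pm W_k \this\optx$ to split
\begin{equation*}
    \nexxt\primalpredict - \nexxt\optx
    = W_k(\thisx - \this\optx) + (W_k - \opt{W}_k)\this\optx + (a_{k+1} - \opt{a}_{k+1}).
\end{equation*}
The first summand carries the informative $\norm{\thisx - \this\optx}$ content, while the other two terms encode the discrepancy between the predictor $P_k$ and the true temporal coupling operator $\opt{P}_k$, and will be absorbed into $\epsilon_{k+1}$.

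Next, I apply Young's inequality $\norm{u+v}^2 \le (1+\alpha)\norm{u}^2 + (1+\alpha^{-1})\norm{v}^2$ with $u = W_k(\thisx - \this\optx)$ and $v = (W_k - \opt{W}_k)\this\optx + (a_{k+1} - \opt{a}_{k+1})$, tuning the free parameter to $\alpha = \Lambda_k/\norm{W_k}^2 - 1$. The hypothesis $\Lambda_k > \norm{W_k}^2$ guarantees $\alpha > 0$; combined with the operator-norm estimate $\norm{W_k(\thisx - \this\optx)}^2 \le \norm{W_k}^2\norm{\thisx - \this\optx}^2$, this choice delivers the promised coefficient $\Lambda_k$ in front of $\norm{\thisx - \this\optx}^2$. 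The complementary factor evaluates to $1 + \alpha^{-1} = \Lambda_k/(\Lambda_k - \norm{W_k}^2)$, which is precisely the denominator appearing in $\epsilon_{k+1}$.

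Finally, I apply Young's inequality a second time with parameter $\paffinecontrol$ to obtain
\begin{equation*}
    \norm{v}^2
    \le (1+\paffinecontrol)\norm{(W_k - \opt{W}_k)\this\optx}^2 + (1+\paffinecontrol^{-1})\norm{a_{k+1} - \opt{a}_{k+1}}^2,
\end{equation*}
then invoke $\norm{(W_k - \opt{W}_k)\this\optx}^2 \le \norm{W_k - \opt{W}_k}^2\norm{\this\optx}^2$ together with the hypothesis $\norm{\this\optx}^2 \le M_{\optx}$ on the first term. Collecting these estimates yields the stated bound with the prescribed $\epsilon_{k+1}$, and the dual bound repeats the argument verbatim. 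There is no real obstacle; the only delicate point is the initial choice of Young parameter $\alpha = \Lambda_k/\norm{W_k}^2 - 1$, which simultaneously produces the coefficient $\Lambda_k$ and fixes the denominator $\Lambda_k - \norm{W_k}^2$, and which hinges essentially on the assumption $\Lambda_k > \norm{W_k}^2$.
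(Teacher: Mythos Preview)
Your proof is correct and follows the same strategy as the paper: split $\nexxt\primalpredict - \nexxt\optx$ by inserting $\pm W_k\this\optx$, then apply Young's inequality twice. The only cosmetic difference is the choice of the first Young parameter: you take $\alpha = \Lambda_k/\norm{W_k}^2 - 1$, which makes the leading coefficient exactly $\Lambda_k$ and yields the complementary factor $\Lambda_k/(\Lambda_k-\norm{W_k}^2)$ directly, whereas the paper takes $t = 1 - \norm{W_k}^2/\Lambda_k$ and then uses $\norm{W_k}^2 \le \Lambda_k$ to arrive at the same bound (in fact your choice gives a constant that is smaller by a factor of two in the penalty, so it certainly implies the stated $\epsilon_{k+1}$). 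One tiny caveat: your $\alpha$ is undefined when $W_k=0$, but in that degenerate case the bound is immediate since the first summand vanishes.
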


\begin{proof}
    For any $t>0$, we apply Young's inequality to obtain
    \begin{align*}
        \norm{\nexxt\primalpredict - \nexxt\optx}^2
        &
        = \norm{W_k\thisx + a_{k+1} - \opt{W}_k\this\optx - \opt{a}_{k+1} }^2\\
        &
        \leq (1+t) \norm{W_k\thisx - W_k\this\optx}^2 + (1+t^{-1})\norm{W_k\this\optx - \opt{W}_k\this\optx + a_{k+1} - \opt{a}_{k+1}}^2\\
        &
        \leq (1+t)  \norm{W_k}^2 \norm{\thisx - \this\optx}^2
        \\
        \MoveEqLeft[-1]
        + (1+t^{-1})\big[(1+\paffinecontrol)\norm{(W_k -\opt{W}_k)(\this\optx)}^2
            + (1+\paffinecontrol^{-1}) \norm{a_{k+1} - \opt{a}_{k+1}}^2
        \big]
        \\
        &
        \leq  (1+t)\Lambda_k\norm{\thisx - \this\optx}^2
        \\
        \MoveEqLeft[-1]
        + (1+t^{-1})\big[(1+\paffinecontrol)\norm{W_k -\opt{W}_k}^2M_{\optx}
        + (1+\paffinecontrol^{-1}) \norm{a_{k+1} - \opt{a}_{k+1}}^2
        \big].
    \end{align*}
    Choosing $t = 1 - \norm{W_k}^2/\Lambda_k$ yields the desired Lipschitz-like constants and penalties for the primal prediction. The dual prediction bounds and penalties are obtained by a similar computation.
\end{proof}

We can now state our main result regarding the prediction errors $e_N$. The following theorem also imposes additional step length condition so that the prediction errors remain bounded.

\begin{theorem}
    \label{thm:overallbounds}
    Let \cref{ass:pd:main} together with the assumptions of \cref{lemma:primaldualbounds} hold.
    Suppose further that $\norm{\frac{\eta_k}{\eta_{k+1}}K_k - T_k^{\ast}K_{k+1}W_k}^2 \leq C_k$ for some $C_k \ge 0$.
    If the testing and step length parameters satisfy
    \begin{equation}%
        \label{eq:pd:primaltestcond}
         \tauTest_k(1+\gamma_k\tau_k) > \tauTest_{k+1} \Lambda_k,
    \end{equation}
    then
    \[
        e_N(u^{0:N-1},\optu^{0:N}) \le \sum_{k=0}^{N-1} \epsilon_{k+1}^{\dagger}(\this\optu),
    \]
    where, for any $\pi_k ,\tilde{\pi}_k, \beta > 0$ and $\kappa \in (0,1)$,
    \begin{equation}
        \label{eq:pd:overallpredictionerror}
        \begin{split}
            \epsilon_{k+1}^\dagger(\this\optu)
            &
            =
            \bigg(\frac{\sigmaTest_{k+1} \Theta_k - \kappa\sigmaTest_k(1+\rho_k\sigma_k)}{2} + \frac{\eta_{k+1}^2(C_k\beta +\norm{K_{k+1}}^2\norm{W_k}^2)}{2\beta(\tauTest_k(1+\gamma_k\tau_k) - \tauTest_{k+1} \Lambda_k)}\bigg) \norm{\thisy - \this\opty}^2
            \\
            \MoveEqLeft[-1]
            +
            \bigg(\frac{\eta_{k+1}^2 \norm{K_{k+1}}^2  \norm{T_k}^2}{2(1-\kappa) \psi_k (1+\rho_k\sigma_k)} + \frac{\eta_{k+1} \norm{K_{k+1}}^2}{2} +  \frac{\tauTest_{k+1}\Lambda_k}{\Lambda_k - \norm{W_k}^2} \bigg)
            \\
            \MoveEqLeft[-2]
            \cdot
            \Bigl[  M_{\optx}(1+\paffinecontrol)\norm{\opt{W}_k - W_k}^2 +
            (1+\paffinecontrol^{-1})\norm{\opt{a}_{k+1} - a_{k+1}}^2 \Bigr]
            \\
            \MoveEqLeft[-1]
            +  \bigg( \frac{\eta_{k+1}^2  (C_k\beta + \norm{K_{k+1}}^2\norm{W_k}^2) }{2(\tauTest_k(1+\gamma_k\tau_k) - \tauTest_{k+1} \Lambda_k)} + \frac{\eta_{k+1}}{2} + \frac{\sigmaTest_{k+1}\Theta_k}{\Theta_k - \norm{T_k}^2}\bigg)
            \\
            \MoveEqLeft[-2]
            \cdot
            \Bigl[M_{\opty}(1 + \daffinecontrol)\norm{\opt{T}_k - T_k}^2 + (1+\daffinecontrol^{-1})\norm{\opt{b}_{k+1} - b_{k+1}}^2\Bigr].
        \end{split}
    \end{equation}
\end{theorem}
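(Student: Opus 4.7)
The plan is to bound each summand of $e_N$ separately by the corresponding $\epsilon_{k+1}^\dagger(\this\optu)$, reindexing $k \mapsto k+1$ in \eqref{eq:pd:estimate-rhs}. Writing $p_k \defeq \thisx - \this\optx$, $q_k \defeq \thisy - \this\opty$, $P_k \defeq \nexxt{\primalpredict} - \nexxt{\optx} = W_k p_k + \Delta_{k+1}^x$, $Q_k \defeq \nexxt{\dualpredict} - \nexxt{\opty} = T_k q_k + \Delta_{k+1}^y$ with $\Delta_{k+1}^x \defeq (W_k - \opt{W}_k)\this\optx + (a_{k+1} - \opt{a}_{k+1})$ and $\Delta_{k+1}^y$ analogous, and expanding both norms using the block structures of $\Precond_k$ and $\Gamma_k$ (whose off-diagonal is antisymmetric, contributing zero to $\iprod{\Gamma_k u}{u}$), the $k$-th summand reduces to
$\frac{\tauTest_{k+1}}{2}\norm{P_k}^2 + \frac{\sigmaTest_{k+1}}{2}\norm{Q_k}^2 - \eta_{k+1}\iprod{K_{k+1}P_k}{Q_k} - \frac{\tauTest_k(1+\gamma_k\tau_k)}{2}\norm{p_k}^2 - \frac{\sigmaTest_k(1+\rho_k\sigma_k)}{2}\norm{q_k}^2 + \eta_k\iprod{K_k p_k}{q_k}.$

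I next apply \cref{lemma:primaldualbounds} to the pure quadratics: the penalty terms $\tauTest_{k+1}\epsilon_{k+1}$ and $\sigmaTest_{k+1}\tilde\epsilon_{k+1}$ supply the last summands of the $\norm{\Delta_{k+1}^x}^2$ and $\norm{\Delta_{k+1}^y}^2$ coefficients of $\epsilon_{k+1}^\dagger$. Condition \eqref{eq:pd:primaltestcond} makes $A_p \defeq \tauTest_k(1+\gamma_k\tau_k) - \tauTest_{k+1}\Lambda_k > 0$, giving a negative excess $-A_p \norm{p_k}^2/2$ to absorb forthcoming Young's contributions. I combine the dual-side remainder $\sigmaTest_{k+1}\Theta_k\norm{q_k}^2/2$ with a retained portion $-\kappa\sigmaTest_k(1+\rho_k\sigma_k)\norm{q_k}^2/2$ (for $\kappa \in (0,1)$) to match the first summand of the $\norm{q_k}^2$ coefficient, and hold the remaining $-(1-\kappa)\sigmaTest_k(1+\rho_k\sigma_k)\norm{q_k}^2/2$ in reserve.

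The cross term $-\eta_{k+1}\iprod{K_{k+1}P_k}{Q_k}$ splits into four pieces. The $\iprod{K_{k+1}W_kp_k}{T_kq_k}$ piece combines with $\eta_k\iprod{K_k p_k}{q_k}$ via $T_k^*K_{k+1}W_k = D_k + (\eta_k/\eta_{k+1})K_k$ with $\norm{D_k}^2 \leq C_k$, yielding $-\eta_{k+1}\iprod{D_k p_k}{q_k}$. I apply Young's to this term and to $-\eta_{k+1}\iprod{K_{k+1}W_kp_k}{\Delta_{k+1}^y}$ with weights in the ratio $C_k\beta : \norm{K_{k+1}}^2\norm{W_k}^2$, so that their $\norm{p_k}^2$ contributions jointly saturate $A_p/2$; the induced $\norm{q_k}^2$ and $\norm{\Delta_{k+1}^y}^2$ coefficients then carry exactly the characteristic numerator $C_k\beta + \norm{K_{k+1}}^2\norm{W_k}^2$ of $\epsilon_{k+1}^\dagger$. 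Young's on $-\eta_{k+1}\iprod{K_{k+1}\Delta_{k+1}^x}{T_kq_k}$ with weight $(1-\kappa)\sigmaTest_k(1+\rho_k\sigma_k)$ on the $q_k$ side consumes the reserve and produces the first summand of the $\norm{\Delta_{k+1}^x}^2$ coefficient; the final piece is bounded by $\frac{\eta_{k+1}\norm{K_{k+1}}^2}{2}\norm{\Delta_{k+1}^x}^2 + \frac{\eta_{k+1}}{2}\norm{\Delta_{k+1}^y}^2$, supplying the middle summands. Finally, expanding $\norm{\Delta_{k+1}^x}^2$ and $\norm{\Delta_{k+1}^y}^2$ by Young's with parameters $\pi_k, \tilde\pi_k$ and invoking $\norm{\this\optx}^2 \leq M_{\optx}$, $\norm{\this\opty}^2 \leq M_{\opty}$ recovers the bracketed factors in \eqref{eq:pd:overallpredictionerror}; summing over $k = 0, \ldots, N-1$ finishes the argument.

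The main obstacle is the simultaneous, tight choice of the Young's parameters so that the four cross-term bounds consume precisely the two available negative reservoirs ($A_p/2$ on $\norm{p_k}^2$ and $(1-\kappa)\sigmaTest_k(1+\rho_k\sigma_k)/2$ on $\norm{q_k}^2$) and redistribute the remainder in the exact pattern of \eqref{eq:pd:overallpredictionerror}. The proportional splitting between the $D_k$- and $\Delta_{k+1}^y$-terms is what generates the nonobvious numerator $C_k\beta + \norm{K_{k+1}}^2\norm{W_k}^2$, with $\beta > 0$ parameterising the balance.
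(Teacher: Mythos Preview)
Your proposal is correct and follows essentially the same route as the paper: the same block expansion of the two weighted norms, the same four-way split of the cross term, the same combination of $\iprod{K_{k+1}W_kp_k}{T_kq_k}$ with $\eta_k\iprod{K_kp_k}{q_k}$ via the $C_k$ bound, and the same tuned Young's inequalities (your $\lambda_1=\beta\lambda_2$ with $\lambda_2$ chosen to exhaust $A_p$ is exactly the paper's choice $t=\beta s$, $s=A_p/[\eta_{k+1}(C_k\beta+\norm{K_{k+1}}^2\norm{W_k}^2)]$, and your handling of the $\Delta_{k+1}^x$--$q_k$ and $\Delta_{k+1}^x$--$\Delta_{k+1}^y$ pieces matches the paper's choices of $r$ and $\zeta=1$). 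Only the narrative differs: you phrase the parameter selection in terms of ``reservoirs'' being ``saturated'' and ``consumed'', whereas the paper introduces free Young's parameters $s,t,r,\zeta$ first and fixes them at the end.
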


\begin{remark}
    \label{rem:overallbounds}
    The first line of \eqref{eq:pd:overallpredictionerror}
    depends on the boundedness of the dual iterates; the remaining lines depend on the difference between the predictions and true temporal couplings, which disappear whenever the predictions are as good as the true temporal couplings. The factors $C_k$ measure the compatibility of the predictions with the operators $K_k$.
    In view of \cref{thm:overallbounds}, the regret estimate of \cref{thm:pd:regret_estimate} now shows that
    \begin{multline*}
        [Q_{1:N}(x^{1:N}) + \mathring G_{1:N}(K_{1:N}x^{1:N})]
        - \sup_{\optx^{1:N} \in  \PpredictConstr_{1:N}}
            [Q_{1:N}(\optx^{1:N}) + \mathring G_{1:N}(K_{1:N}\optx^{1:N})]
        \\
        \le
        \sup_{\optu^{0:N}\in  \PDpredictConstr_{0:N}} \bigg( \frac{1}{2}\norm{u^0-\optu^0}^2_{\eta_0\Precond_0+\Gamma_0} + c_N(\optx^{1:N}, y^{1:N}) + \sum_{k=0}^{N-1} \epsilon_{k+1}^{\dagger}(\this\optu) \bigg).
    \end{multline*}
    Minding the interpretation of $c_N(\optx^{1:N}, y^{1:N})$ in \cref{rem:pd:solution-set-disrepancy}, the regret therefore depends on how well (a) the comparison set solves the static problems, and (b) the predictors track the true temporal couplings.

    In the unaccelerated setting of \cref{ex:pd:noaccel}, with linear temporal evolution and predictors ($a_{k+1}=\bar a_{k+1}=0$, and $b_{k+1}=\bar b_{k+1}=0$), we can simplify
    \begin{equation*}
        \begin{split}
            \epsilon_{k+1}^\dagger(\this\optu)
            &
            =
            \bigg(\frac{\tau}{\sigma}\frac{\Theta_k - \kappa(1+\rho_k\sigma)}{2} + \frac{\tau^2(C_k\beta +\norm{K_{k+1}}^2\norm{W_k}^2)}{2\beta(1+\gamma_k\tau -  \Lambda_k)}\bigg) \norm{\thisy - \this\opty}^2
            \\
            \MoveEqLeft[-1]
            +
            \bigg(\frac{\tau\sigma \norm{K_{k+1}}^2  \norm{T_k}^2}{2(1-\kappa) (1+\rho_k\sigma)} + \frac{\tau \norm{K_{k+1}}^2}{2} +  \frac{\Lambda_k}{\Lambda_k - \norm{W_k}^2} \bigg)
            \cdot
            M_{\optx}\norm{\opt{W}_k - W_k}^2
            \\
            \MoveEqLeft[-1]
            +  \bigg( \frac{\tau^2  (C_k\beta + \norm{K_{k+1}}^2\norm{W_k}^2) }{2(1+\gamma_k\tau -  \Lambda_k) } + \frac{\tau}{2} + \frac{\tau}{\sigma}\frac{\Theta_k}{\Theta_k - \norm{T_k}^2}\bigg)
            \cdot
            M_{\opty}\norm{\opt{T}_k - T_k}^2.
        \end{split}
    \end{equation*}
    Moreover, if we have sufficient strong convexity compared to the bounds $\Theta_k$ and $\Lambda_k$, i.e., such that $1+\rho_k\sigma \ge \inv\kappa\Theta_k$ and $1+\gamma_k\tau > \Lambda_k$, we estimate
        \begin{equation*}
        \begin{split}
            \epsilon_{k+1}^\dagger(\this\optu)
            &
            \le
            \bigg(\frac{\tau\sigma \norm{K_{k+1}}^2  \norm{T_k}^2}{2(1-\kappa) (1+\rho_k\sigma)} + \frac{\tau \norm{K_{k+1}}^2}{2} +  \frac{\Lambda_k}{\Lambda_k - \norm{W_k}^2} \bigg)
            \cdot
            M_{\optx}\norm{\opt{W}_k - W_k}^2
            \\
            \MoveEqLeft[-1]
            +  \bigg( \frac{\tau}{2} + \frac{\tau}{\sigma}\frac{\Theta_k}{\Theta_k - \norm{T_k}^2}\bigg)
            \cdot
            M_{\opty}\norm{\opt{T}_k - T_k}^2.
        \end{split}
    \end{equation*}
    Additionally, using $\tau\sigma\norm{K_{k+1}}^2 \le 1$ from \eqref{eq:pd:primaltestcond-positivity} in \cref{ass:pd:main}, we further estimate
    \begin{equation*}
        \begin{split}
            \epsilon_{k+1}^\dagger(\this\optu)
            &
            \le
            \bigg(\frac{\norm{T_k}^2}{2(1-\kappa) \Theta_k} + \frac{1}{2\sigma} +  \frac{\Lambda_k}{\Lambda_k - \norm{W_k}^2} \bigg)
            \cdot
            M_{\optx}\norm{\opt{W}_k - W_k}^2
            \\
            \MoveEqLeft[-1]
            +  \frac{\tau}{\sigma}\bigg(\frac{1}{2\sigma} + \frac{\Theta_k}{\Theta_k - \norm{T_k}^2}\bigg)
            \cdot
            M_{\opty}\norm{\opt{T}_k - T_k}^2.
        \end{split}
    \end{equation*}
    Thus, in that case, the prediction penalty mainly depends on
    \begin{enumerate}[label=(\alph*)]
        \item the bounds $M_{\optx} \ge \norm{\this\optx}^2$ and $ M_{\opty} \ge \norm{\this\opty}^2$ on the comparison sequences;
        \item the closeness of the prediction operators $T_k$ and $W_k$ to the true temporal couplings $\opt T_k$ and $\opt W_k$; and
        \item the looseness of the upper bounds $\Theta_k$ and $\Lambda_k$ on $\norm{T_k}$ and $\norm{W_k}$.
    \end{enumerate}
\end{remark}

\begin{proof}[Proof of \cref{thm:overallbounds}]
    By the definition of the primal and dual predictors, we have
    \begin{multline}
        \label{eq:pd:predictineq0}
        \norm{\nexxt{\pdpredict}-\nexxt\optu}^2_{\eta_{k+1}\Precond_{k+1}}
        =
        \tauTest_{k+1}\norm{\nexxt x - \nexxt\primalpredict}^2
        +
        \sigmaTest_{k+1}\norm{\nexxt y  - \nexxt\dualpredict}^2
        \\
        -2\eta_{k+1} \iprod{K_{k+1}(W_k\thisx  - \opt{W}_k \this\optx + a_{k+1} - \opt{a}_{k+1} )}{T_k\thisy  - \opt{T}_k \this\opty + b_{k+1} - \opt{b}_{k+1}}
    \end{multline}
    and
    \begin{equation}
        \label{eq:pd:predictineq1}
        \begin{aligned}[b]
            \norm{\thisu-\this\optu}^2_{\eta_k\Precond_k+\Gamma_k}
            & =
            \tauTest_k(1+\gamma_k\tau_k)\norm{\thisx-\this\optx}^2
            +
            \sigmaTest_k(1+\rho_k\sigma_k)\norm{\thisy-\this\opty}^2
            \\
            \MoveEqLeft[-1]
            -2\eta_k \iprod{K_k(\thisx-\this\optx)}{\thisy-\this\opty}.
        \end{aligned}
    \end{equation}
    Expanding the last term at the right-hand side of \eqref{eq:pd:predictineq0} (without the scalar factor $\eta_{k+1}$) yields
    \begin{equation}
        \label{eqn:thirdterm_expansion}
        \begin{aligned}[b]
            -2\iprod{&K_{k+1}(W_k\thisx  - \opt{W}_k \this\optx + a_{k+1} - \opt{a}_{k+1} )}{T_k\thisy  - \opt{T}_k \this\opty + b_{k+1} - \opt{b}_{k+1}}
            \\
            &
            = -2\iprod{K_{k+1}W_k(\thisx  -  \this\optx)}{T_k(\thisy  - \this\opty)}
            \\
            \MoveEqLeft[-1]
            +2\iprod{K_{k+1}W_k( \this\optx -\thisx ) }{(T_k -\opt{T}_k) \this\opty + b_{k+1} - \opt{b}_{k+1}}
            \\
            \MoveEqLeft[-1]
            +2\iprod{ K_{k+1}( \opt{W}_k  - W_k)\this\optx + K_{k+1}(\opt{a}_{k+1} - a_{k+1})}{T_k(\thisy  -  \this\opty)}
            \\
            \MoveEqLeft[-1]
            +2\iprod{  K_{k+1}( \opt{W}_k  - W_k)\this\optx + K_{k+1}(\opt{a}_{k+1} - a_{k+1})}{(T_k -\opt{T}_k) \this\opty + b_{k+1}-\opt{b}_{k+1}}.
        \end{aligned}
    \end{equation}
    Using Young's inequality for some $s>0$,
    \begin{equation}
        \label{eqn:thirdterm_expansion_2nd}
        \begin{aligned}[b]
        2\iprod{K_{k+1}W_k&(\this\optx - \thisx ) }{(T_k -\opt{T}_k) \this\opty + b_{k+1} - \opt{b}_{k+1}}
        \\
        &
        \le
        s \norm{K_{k+1}}^2 \norm{W_k}^2 \norm{\this\optx -\thisx}^2
        + \inv s\norm{(T_k -\opt{T}_k) \this\opty + b_{k+1} - \opt{b}_{k+1}}^2
        \\
        &
        \le
        s \norm{K_{k+1}}^2 \norm{W_k}^2 \norm{\this\optx -\thisx}^2
        + \inv s (1 + \daffinecontrol)M_{\opt{y}}  \norm{T_k -\opt{T}_k }^2
        \\
        \MoveEqLeft[-1]
        + \inv s (1 + \daffinecontrol^{-1}) \norm{b_{k+1} - \opt{b}_{k+1}}^2.
        \end{aligned}
    \end{equation}
    Similarly, by Young's inequality (for some $r, \zeta >0$) we obtain
    \begin{equation}
        \label{eqn:thirdterm_expansion_3rd}
        \begin{aligned}[b]
            2\iprod{&K_{k+1}( \opt{W}_k  - W_k)\this\optx + K_{k+1}(\opt{a}_{k+1} - a_{k+1})}{T_k(\thisy  -  \this\opty)}
            \\
            &
            =
            2\iprod{( \opt{W}_k  - W_k)\this\optx + \opt{a}_{k+1} - a_{k+1}}{K_{k+1}^{\ast}T_k(\thisy  -  \this\opty)}
            \\
            &
            \le
            r \norm{( \opt{W}_k - W_k)\this\optx  + \opt{a}_{k+1} - a_{k+1} }^2
            + \inv r \norm{K_{k+1}}^2 \norm{T_k}^2 \norm{\thisy - \this\opty}^2
            \\
            &
            \le
            M_{\optx} r (1 + \paffinecontrol)  \norm{\opt{W}_k - W_k}^2
            +  r (1 + \paffinecontrol^{-1}) \norm{ \opt{a}_{k+1} - a_{k+1} }^2
            \\
            \MoveEqLeft[-1]
            + \inv r\norm{K_{k+1}}^2 \norm{T_k}^2\norm{\thisy - \this\opty}^2
        \end{aligned}
    \end{equation}
    and
    \begin{equation}
        \begin{aligned}[t]
            \label{eqn:thirdterm_expansion_4th}
            2\iprod{K_{k+1}&( \opt{W}_k  - W_k)\this\optx + K_{k+1}(\opt{a}_{k+1} - a_{k+1})}{(T_k -\opt{T}_k) \this\opty + b_{k+1}-\opt{b}_{k+1}}
            \\
            &
            \le
            \zeta \norm{K_{k+1}( \opt{W}_k  - W_k)\this\optx + K_{k+1}(\opt{a}_{k+1} - a_{k+1})}^2
            \\
            \MoveEqLeft[-1]
            + \inv\zeta\norm{(T_k -\opt{T}_k) \this\opty + b_{k+1}-\opt{b}_{k+1}}^2
            \\
            &
            \le
            \zeta M_{\optx} (1 + \paffinecontrol) \norm{K_{k+1}}^2 \norm{ \opt{W}_k  - W_k}^2
            \\
            \MoveEqLeft[-1]
            +  \zeta(1 + \paffinecontrol^{-1})\norm{K_{k+1}}^2 \norm{\opt{a}_{k+1} - a_{k+1}}^2
            \\
            \MoveEqLeft[-1]
            + \inv\zeta M_{\opt{y}}(1 + \daffinecontrol)\norm{T_k -\opt{T}_k}^2 + \inv\zeta (1 + \daffinecontrol^{-1})\norm{ b_{k+1}-\opt{b}_{k+1}}^2.
        \end{aligned}
    \end{equation}
    Substituting \eqref{eqn:thirdterm_expansion_2nd}, \eqref{eqn:thirdterm_expansion_3rd} and \eqref{eqn:thirdterm_expansion_4th} into \eqref{eqn:thirdterm_expansion} yields
    \begin{equation*}
        \begin{aligned}[b]
            - 2\iprod{K_{k+1}&(W_k\thisx   - \opt{W}_k \this\optx)}{T_k\thisy  - \opt{T}_k \this\opty}
            \\
            &
            \leq
            -  2\iprod{K_{k+1}W_k(\thisx  - \this\optx) }{T_k(\thisy  - \this\opty)}
            \\
            \MoveEqLeft[-1]
            +  s \norm{K_{k+1}}^2 \norm{W_k}^2 \norm{\this\optx -\thisx}^2
            + \inv s(1 + \daffinecontrol)M_{\opt{y}}  \norm{T_k -\opt{T}_k }^2
            \\
            \MoveEqLeft[-1]
            + \inv s (1 + \daffinecontrol^{-1}) \norm{b_{k+1} - \opt{b}_{k+1}}^2
            + M_{\optx} r (1 + \paffinecontrol)  \norm{\opt{W}_k - W_k}^2
            \\
            \MoveEqLeft[-1]
            + r (1 + \paffinecontrol^{-1}) \norm{ \opt{a}_{k+1} - a_{k+1} }^2
            + \inv r \norm{K_{k+1}}^2 \norm{T_k}^2\norm{\thisy - \this\opty}^2
            \\
            \MoveEqLeft[-1]
            + \zeta M_{\optx} (1 + \paffinecontrol) \norm{K_{k+1}}^2 \norm{ \opt{W}_k  - W_k}^2
            + \zeta (1 + \paffinecontrol^{-1}) \norm{K_{k+1}}^2 \norm{\opt{a}_{k+1} - a_{k+1}}^2
            \\
            \MoveEqLeft[-1]
            + \inv\zeta M_{\opt{y}}(1 + \daffinecontrol)\norm{T_k -\opt{T}_k}^2
            + \inv\zeta (1 + \daffinecontrol^{-1})\norm{ b_{k+1}-\opt{b}_{k+1}}^2.
        \end{aligned}
    \end{equation*}
    Combining this inequality with \eqref{eq:pd:predictineq0}, then invoking the prediction bounds and penalties from \cref{lemma:primaldualbounds}, we get
    \begin{equation}
        \label{eq:pd:predictineq0_upper}
        \begin{aligned}[b]
            \norm{\nexxt{\pdpredict}-\nexxt\optu}^2_{\eta_{k+1}\Precond_{k+1}}
            &
            \leq
            \tauTest_{k+1} \Lambda_k \norm{\thisx  -  \this\optx}^2
            + \sigmaTest_{k+1} \Theta_k \norm{\thisy  - \this\opty}^2
            + \eta_{k+1} h_{k+1}
            \\
            \MoveEqLeft[-1]
            + 2\tauTest_{k+1}\epsilon_{k+1} + 2\sigmaTest_{k+1}\tilde{\epsilon}_{k+1}
        \end{aligned}
    \end{equation}
    for
    \[
        \begin{aligned}
            h_{k+1}
            &
            \defeq
            - 2 \iprod{K_{k+1}W_k(\thisx  - \this\optx) }{T_k(\thisy  - \this\opty)}
            +   s \norm{K_{k+1}}^2 \norm{W_k}^2 \norm{\this\optx -\thisx}^2
            \\
            \MoveEqLeft[-1]
            + \inv s(1 + \daffinecontrol)M_{\opt{y}}  \norm{T_k -\opt{T}_k }^2
            + \inv s (1 + \daffinecontrol^{-1}) \norm{b_{k+1} - \opt{b}_{k+1}}^2
            \\
            \MoveEqLeft[-1]
            +  M_{\optx} r (1 + \paffinecontrol)  \norm{\opt{W}_k - W_k}^2
            +  r (1 + \paffinecontrol^{-1}) \norm{ \opt{a}_{k+1} - a_{k+1} }^2
            \\
            \MoveEqLeft[-1]
            + \inv r  \norm{K_{k+1}}^2 \norm{T_k}^2\norm{\thisy - \this\opty}^2
            +  \zeta \norm{K_{k+1}}^2 M_{\optx} (1 + \paffinecontrol)\norm{ \opt{W}_k  - W_k}^2
            \\
            \MoveEqLeft[-1]
            +  \zeta \norm{K_{k+1}}^2(1 + \paffinecontrol^{-1}) \norm{\opt{a}_{k+1} - a_{k+1}}^2
            + \inv\zeta M_{\opt{y}}(1 + \daffinecontrol)\norm{T_k -\opt{T}_k}^2
            \\
            \MoveEqLeft[-1]
            + \inv\zeta (1 + \daffinecontrol^{-1})\norm{ b_{k+1}-\opt{b}_{k+1}}^2.
        \end{aligned}
    \]
    Furthermore, we note that by another application of Young's inequality (for some $t>0$) and by using the assumption that $\norm{\frac{\eta_k}{\eta_{k+1}}K_k - T_k^{\ast}K_{k+1}W_k}^2 \leq C_k$, it follows that
    \begin{equation}
        \label{eqn:thirdterm_expansion_1st}
        \begin{aligned}[t]
            2 \eta_k \iprod{K_k(\thisx-\this\optx)}{\thisy-\this\opty} &- 2\eta_{k+1}\iprod{K_{k+1}W_k(\thisx  -  \this\optx) }{T_k(\thisy  -\this\opty)}
            \\
            &
            =
            2\eta_{k+1}\adaptiprod{\left(\frac{\eta_k}{\eta_{k+1}}K_k - T_k^{\ast}K_{k+1}W_k\right)(\thisx - \this\optx)}{\thisy - \this\opty}
            \\
            &
            \leq
            \eta_{k+1} C_k t \norm{\thisx - \this\optx}^2 + \inv t\eta_{k+1}\norm{\thisy-\this\opty}^2
        \end{aligned}
    \end{equation}
    Subtracting  \eqref{eq:pd:predictineq1} from  \eqref{eq:pd:predictineq0_upper}, and using  \eqref{eqn:thirdterm_expansion_1st} yields
    \begin{equation}
        \begin{aligned}[t]
        \label{eq:pd:overallfinal}
        \norm{\nexxt{\pdpredict}-\nexxt\optu}^2_{\eta_{k+1}\Precond_{k+1}} & - \norm{\thisu-\this\optu}^2_{\eta_k\Precond_k+\Gamma_k}
        \\
        \MoveEqLeft[6]
        \leq
        \bigl(\tauTest_{k+1} \Lambda_k + \eta_{k+1}C_k t +  \eta_{k+1} \norm{K_{k+1}}^2 \norm{W_k}^2 s  - \tauTest_k(1+\gamma_k\tau_k) \bigr)\norm{\this\optx -\thisx}^2
        \\
        \MoveEqLeft[9]
        + \bigl(\sigmaTest_{k+1} \Theta_k + \eta_{k+1}\inv t +  \eta_{k+1} \norm{K_{k+1}}^2 \norm{T_k}^2\inv r  - \sigmaTest_k(1+\rho_k\sigma_k)\bigr) \norm{\thisy - \this\opty}^2
        \\
        \MoveEqLeft[9]
        + \eta_{k+1}\bigl( r + \norm{K_{k+1}}^2  \zeta\bigr) \left[M_{\optx}(1+\paffinecontrol) \norm{ \opt{W}_k- W_k}^2 + (1+\paffinecontrol^{-1}) \norm{ \opt{a}_{k+1}- a_{k+1}}^2\right]
        \\
        \MoveEqLeft[9]
        + \eta_{k+1}\bigl(\inv s + \inv\zeta\bigr)\bigl[
            M_{\opty}(1+\daffinecontrol)\norm{ \opt{T}_k - T_k}^2 + (1+\daffinecontrol^{-1})\norm{ \opt{b}_{k+1} - b_{k+1}}^2
        \bigr]
        \\
        \MoveEqLeft[9]
        +  2\tauTest_{k+1}\epsilon_{k+1} + 2\sigmaTest_{k+1}\tilde{\epsilon}_{k+1}.
        \end{aligned}
    \end{equation}
    Since $\tauTest_k(1+\gamma_k\tau_k) - \tauTest_{k+1} \Lambda_k >0 $ by assumption, setting $t = \beta s$ for some $\beta > 0$ and choosing
    \[
        s =  \dfrac{\tauTest_k(1+\gamma_k\tau_k) - \tauTest_{k+1} \Lambda_k}{\eta_{k+1}(C_k \beta + \norm{K_{k+1}}^2 \norm{W_k}^2)} > 0
    \]
    gives us
    \[
            \tauTest_{k+1} \Lambda_k + \eta_{k+1}C_k t + \eta_{k+1} \norm{K_{k+1}}^2 \norm{W_k}^2 s - \tauTest_k(1+\gamma_k\tau_k) = 0.
    \]
    By using the definition of $\epsilon_{k+1}$ and $\tilde{\epsilon}_{k+1}$ from \cref{lemma:primaldualbounds}, and by setting
    \[
        r = \dfrac{\eta_{k+1}\norm{K_{k+1}}^2\norm{T_k}^2}{\psi_k(1-\kappa)(1 + \rho_k \sigma_k)} \quad\text{and}\quad
        \zeta = 1,
    \]
    the remaining terms on the right-hand side of \eqref{eq:pd:overallfinal} give the prediction penalties. Thus, \eqref{eq:pd:overallfinal} turns into the desired overall prediction bound.
\end{proof}

\begin{remark}
Alternatively, for any $\beta, \mu, \omega \in (0,1)$, we can set
    \begin{align*}
        t & =\dfrac{\eta_{k+1}}{(1-\omega)(1+\rho_k\sigma_k)\psi_k},
        &
        s & = \dfrac{(1-\mu)\phi_k (1+\gamma_k\tau_k)}{\eta_{k+1} \norm{K_{k+1}}^2 \norm{W_k}^2},
        \\
        r & = \dfrac{\eta_{k+1} \norm{K_k}^2 \norm{T_k}^2}{(1-\beta)\omega\psi_k(1+\rho_k\sigma_k)},
        \quad\text{and}
        &
        \zeta & = 1
    \end{align*}
    in \eqref{eq:pd:overallfinal} while imposing the dual predictor restriction and the primal metric update bounds
        \begin{align*}%
             \beta\kappa \psi_k(1+\rho_k\sigma_k)
            &> \psi_{k+1}\Theta_k
            \quad\text{and}
            \\
            \mu \tauTest_k(1+\gamma_k \tau_k)
            & >
            { \tauTest_{k+1}\Lambda_k + \frac{ \eta_{k+1}^2 C_k}{\psi_k(1-\kappa)(1+\sigma_k\rho_k)}}.
        \end{align*}%
    Then
    $
        e_N(u^{0:N-1},\optu^{0:N}) \le \sum_{k=0}^{N-1} \epsilon_{k+1}^{\dagger}(\this\optu)
    $
    with
    \[
        \begin{aligned}
            \epsilon_{k+1}^\dagger
            &
            =
            \bigg(\frac{\eta_{k+1}^2  \norm{K_{k+1}}^2 \norm{T_k}^2}{2(1-\beta)\kappa\omega \psi_k (1+\rho_k\sigma_k)} + \frac{\eta_{k+1}  \norm{K_{k+1}}^2}{2}  +  \frac{\tauTest_{k+1}\Lambda_k}{\Lambda_k - \norm{W_k}^2} \bigg)
            \\
            \MoveEqLeft[-2]
            \cdot
            \Bigl[M_{\optx}(1+\paffinecontrol) \norm{ \opt{W}_k- W_k}^2 + (1+\paffinecontrol^{-1}) \norm{ \opt{a}_{k+1}- a_{k+1}}^2\Bigr]
             \\
            \MoveEqLeft[-1]
            +  \bigg( \frac{\eta_{k+1}^2  \norm{K_{k+1}}^2 \norm{W_k}^2}{2(1-\mu) \phi_k (1+\gamma_k \tau_k)} + \frac{\eta_{k+1}}{2} + \frac{\sigmaTest_{k+1}\Theta_k}{\Theta_k - \norm{T_k}^2} \bigg)
            \\
            \MoveEqLeft[-2]
            \cdot
             \Bigl[M_{\opty}(1+\daffinecontrol)\norm{ \opt{T}_k - T_k}^2 + (1+\daffinecontrol^{-1})\norm{ \opt{b}_{k+1} - b_{k+1}}^2\Bigr].
        \end{aligned}
    \]
    Observe that, in contrast to \cref{thm:overallbounds}, this version of $\epsilon_{k+1}^\dagger$ does not depend on  $\norm{\thisy - \this\opty}^2$.
\end{remark}

\subsection{Total variation preserving predictors}
\label{ssec:TV_predictors}

We now consider examples of pseudo-linear predictors within the framework of optical flow. In what follows, we let
\[
    G_k = \alpha\|\cdot\|_{2,1} \text{ for some } \alpha>0, \text{ and } K_k = D
\]
some $D \in \linear(X_k; Y_k)$, typically, but not necessarily a differential operator, with $Y_k \subset L^2(\Omega; \R^m)$ for some $m \ge 1$ and a domain $\Omega$ such that the “global 1-norm, pointwise 2-norm” is defined. We will make more specific assumptions on $\Omega$ and $X_k$ for individual predictors.
Recall our notation for the primal-dual predictor $P_k: X_k \times Y_k \to X_{k+1} \times Y_{k+1}$ where we wrote $ (\nexxt\primalpredict,\nexxt\dualpredict)=P_k(\thisx,\thisy)$ for all $k \in \N$. We say that a predictor $P_k$ is (strictly) \emph{total variation preserving}\footnote{The nomenclature changes if $K_k$ is chosen differently from a differential operator.} if the primal and dual predictions satisfy
\begin{multline}
    \label{eq:TV_predictors:preservation}
    \alpha \|D\thisx\|_{2,1} = \iprod{D\thisx}{\thisy} \text{ and } \norm{\thisy}_{2,\infty} \leq  \alpha
    \\
    \implies
    \alpha \norm{D \nexxt\primalpredict}_{2,1}= \iprod{D\nexxt\primalpredict}{\nexxt\dualpredict}
    \quad (\text{and, for strict preservation, } \norm{\nexxt\dualpredict}_{2,\infty} \le \alpha).
\end{multline}
We give several examples of this kind of predictor in the following discussion.
In each of these examples, we establish conditions guaranteeing the assumptions of \cref{thm:overallbounds} on the predictors, that is, bounds $C_k > \norm{\frac{\eta_k}{\eta_{k+1}}K_k - T_k^{\ast}K_{k+1}W_k}^2$, $\Theta_k > \norm{T_k}^2$, and $\Lambda_k > \norm{W_k}^2$, for the linear parts $T_k$ and $W_k$ of the primal and dual predictors, where the overall prediction $P_k(\thisx,\thisy) = (W_k\thisx + a_k, T_k\thisy + b_k)$.

Observe that \eqref{eq:pd:cn-zero-cond}, which guarantees that the comparison set solution discrepancy $ c_N(\optx^{1:N}, y^{1:N})  \le 0$, holds if  $\alpha \|D\opt x^0\|_{2,1} = \iprod{D\opt x^0}{\opt y^0}$ and $\norm{\opt y^0}_{2,\infty} \leq  \alpha$, and the true temporal coupling is strictly total variation preserving:
\begin{multline}
    \label{eq:TV_predictors:preservation:strong:coupling}
    \alpha \|D\this{\optx}\|_{2,1} = \iprod{D\this{\optx}}{\this{\opty}}
    \text{ and }
    \norm{\this{\opty}}_{2,\infty} \leq  \alpha
    \\
    \implies
    \alpha \norm{D \nexxt{\optx}}_{2,1}= \iprod{D\nexxt{\optx}}{\nexxt{\opty}}
    \text{ and }
    \norm{\nexxt{\opty}}_{2,\infty} \le \alpha.
\end{multline}

\subsubsection*{Pointwise preservation in $L^{2}$}

For a domain $\Omega \subset \R^n$, we work with the subspaces $X_k \subset L^2(\Omega)$, $Y_k \subset L^2(\Omega; \R^{m})$, and a subset $\InvDisplacements_k \subset H^1(\Omega; \Omega)$ of bijective displacement fields, from which we take a measured $v^k$ and a true $\optv^k$ that give the primal prediction as $\nexxt\primalpredict = x^k \circ v^k$, and the true primal temporal evolution as $\nexxt\optx=\this\optx \circ \this\optv$.

\renewcommand{\abs}[1]{|#1|}

\begin{lemma}
    \label{lemma:predictors:pointwise-l2}
    Let  $\InvDisplacements_k \subset H^1(\Omega; \Omega)$ be a set of bijective displacement fields satisfying $\inv{(\thisv)} \in H^1(\Omega; \Omega)$ for all $\thisv \in \InvDisplacements_k$, and
     \[
        \Lambda_{ \InvDisplacements_k} \defeq \sup_{\thisv \in \InvDisplacements_k, \xi \in \Omega }\abs{\det{\grad \inv{(\thisv)}(\xi)}} < \infty.
    \]
    Let $\thisx \in X_k$, $\thisy \in Y_k$, $\thisv \in {\InvDisplacements_k}$, and
    \[
        (\nexxt\primalpredict,\nexxt\dualpredict) = P_k(\thisx,\thisy) = (W_k\thisx, T_k\thisy),
    \]
    where the primal  $W_k: X_k \to X_{k+1}$ and  dual predictors $T_k: Y_k \to Y_{k+1}$  are defined  pointwise by
    \begin{subequations}
    \label{lemma:eq:predictions_ex1}
    \begin{align}
        (W_k\thisx)(\xi) & \defeq (\thisx \circ \thisv)(\xi)
        \quad\text{ and } \quad
        (T_k\thisy)(\xi)\defeq
            \this{t}(\xi) \thisy(\thisv(\xi))
    \intertext{with $\this{t}(\xi)$ given by}
        \this{t}(\xi) & \defeq
        \begin{cases}
            \frac{| \grad \thisv(\xi)^* D \thisx (\thisv(\xi))| }{|D\thisx (\thisv(\xi))|}(\grad \thisv(\xi))^{-1},
            & \ |D\thisx (\thisv(\xi))| \neq 0,
            \\
            \Id,
            & \ |D\thisx (\thisv(\xi))| = 0.
        \end{cases}
    \end{align}
    \end{subequations}
    This predictor is total variation preserving, i.e., satisfies the non-strict variant of \eqref{eq:TV_predictors:preservation}.
    If we have $\abs{(\grad \thisv(\xi))^{-1}} \abs{\grad \thisv(\xi)} \le 1$ for all $\xi \in \Omega$ (e.g., $\thisv$ is a simple translation and rotation), this preservation is strict, and the true temporal couplings constructed analogously based on a true displacement field $\this{\bar v}$, satisfy \eqref{eq:TV_predictors:preservation:strong:coupling}. Moreover, $\norm{W_k}^2, \norm{T_k}^2 \leq \Lambda_{ \InvDisplacements_k}$ and
    \[
        \adaptnorm{\frac{\eta_k}{\eta_{k+1}}K_k - T_k^{\ast}K_{k+1}W_k}^2 \leq \tilde{C}_k\norm{D}^2
    \]
    for
    \[
       \tilde{C}_k \defeq \max\left(\frac{\eta_k}{\eta_{k+1}}, \sup_{\xi \in \Omega \setminus\Omega_0} \left|\frac{\eta_k}{\eta_{k+1}}  -  \frac{|\det \grad \inv{(\thisv)}(\xi)|| \grad \thisv(\inv{(\thisv)}(\xi))^* D \thisx (\xi)| }{|D\thisx(\xi)|}    \right|^2\right).
    \]
     with $\Omega_0 \defeq \{ \xi \in \Omega \mid D\thisx(\thisv(\xi)) = 0\}$.
\end{lemma}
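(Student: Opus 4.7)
The plan is to handle the four claims---(a) non-strict TV preservation, (b) strict preservation under $|(\grad\thisv(\xi))^{-1}||\grad\thisv(\xi)|\le1$ plus the analogous result for true couplings, (c) the norm bounds $\|W_k\|^2,\|T_k\|^2\le\Lambda_{\InvDisplacements_k}$, and (d) the $\tilde C_k$ bound on $\frac{\eta_k}{\eta_{k+1}}K_k - T_k^{\ast}K_{k+1}W_k$---in sequence. Throughout, the key observation is that pointwise in $\xi$, the primal displacement $\nexxt\primalpredict=\thisx\circ\thisv$ satisfies $D\nexxt\primalpredict(\xi) = \grad\thisv(\xi)^{*}D\thisx(\thisv(\xi))$ by the chain rule (for $D=\grad$; more generally this follows from the assumed pseudo-linear structure), which is what makes the factor $\frac{|\grad\thisv(\xi)^{*}D\thisx(\thisv(\xi))|}{|D\thisx(\thisv(\xi))|}$ in $\this t(\xi)$ do its job.

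For (a), I would first use that the global equality $\alpha\|D\thisx\|_{2,1}=\iprod{D\thisx}{\thisy}$ together with $|\thisy(\zeta)|\le\alpha$ forces pointwise $\iprod{D\thisx(\zeta)}{\thisy(\zeta)}=\alpha|D\thisx(\zeta)|$ a.e.\ in $\zeta$. Substituting the chain-rule expression and the definition of $\this t(\xi)$ into $\iprod{D\nexxt\primalpredict(\xi)}{\nexxt\dualpredict(\xi)}$ and applying the adjoint identity $(\grad\thisv(\xi))^{-*}\grad\thisv(\xi)^{*}=\Id$ cancels the inverse gradient factor, reducing the integrand on $\Omega\setminus\Omega_0$ to $\frac{|\grad\thisv(\xi)^{*}D\thisx(\thisv(\xi))|}{|D\thisx(\thisv(\xi))|}\iprod{D\thisx(\thisv(\xi))}{\thisy(\thisv(\xi))}=\alpha|\grad\thisv(\xi)^{*}D\thisx(\thisv(\xi))|=\alpha|D\nexxt\primalpredict(\xi)|$, while both sides vanish on $\Omega_0$. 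Integrating yields (a). For (b), from the definition $|\this t(\xi)|_{op} \le \frac{|\grad\thisv(\xi)^{*}D\thisx(\thisv(\xi))|}{|D\thisx(\thisv(\xi))|}|(\grad\thisv(\xi))^{-1}|\le|\grad\thisv(\xi)||(\grad\thisv(\xi))^{-1}|\le1$ gives $|\nexxt\dualpredict(\xi)|\le|\this t(\xi)|_{op}\alpha\le\alpha$; the same argument with $\this{\opt v}$ in place of $\thisv$ establishes \eqref{eq:TV_predictors:preservation:strong:coupling}.

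For (c), the change of variables $\zeta=\thisv(\xi)$ together with $|\det\grad\inv{(\thisv)}(\zeta)|\le\Lambda_{\InvDisplacements_k}$ yields $\|W_k\thisx\|^{2}=\int|\thisx(\zeta)|^{2}|\det\grad\inv{(\thisv)}(\zeta)|\,d\zeta\le\Lambda_{\InvDisplacements_k}\|\thisx\|^{2}$, and the same change of variables combined with $|\this t(\xi)|_{op}\le1$ (from (b)) gives the identical bound for $T_k$. The main technical obstacle is (d). Here I would first derive the adjoint by the same change of variables: $(T_k^{*}\tilde y)(\zeta)=|\det\grad\inv{(\thisv)}(\zeta)|\,\this t(\inv{(\thisv)}(\zeta))^{*}\tilde y(\inv{(\thisv)}(\zeta))$. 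Applying this to $\tilde y=K_{k+1}W_k\thisx$ and using the chain-rule identity for $D(\thisx\circ\thisv)$ together with the cancellation $(\grad\thisv(\inv{(\thisv)}(\zeta)))^{-*}\grad\thisv(\inv{(\thisv)}(\zeta))^{*}=\Id$ collapses the composition to the scalar-times-$D\thisx$ form
\[
    (T_k^{*}K_{k+1}W_k\thisx)(\zeta)=\frac{|\det\grad\inv{(\thisv)}(\zeta)||\grad\thisv(\inv{(\thisv)}(\zeta))^{*}D\thisx(\zeta)|}{|D\thisx(\zeta)|}D\thisx(\zeta)
\]
on $\Omega\setminus\Omega_0$ (under the natural identification of $\Omega_0$ with its image under $\thisv$), while both $K_k\thisx$ and $T_k^{*}K_{k+1}W_k\thisx$ vanish on $\Omega_0$. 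Consequently $(\tfrac{\eta_k}{\eta_{k+1}}K_k - T_k^{*}K_{k+1}W_k)\thisx$ is pointwise a scalar multiple of $D\thisx$, so its $L^{2}$-norm is controlled by the $L^{\infty}$-norm of that scalar times $\|D\thisx\|\le\|D\|\|\thisx\|$; taking the supremum over $\zeta\in\Omega\setminus\Omega_0$ (and covering the trivial $\Omega_0$ contribution) gives the claimed $\tilde C_k$. The bookkeeping in identifying the two different roles of the variable $\xi$ (as a point in the source vs.\ target of $\thisv$) and tracking when to apply $\inv{(\thisv)}$ is the main place a careful proof must not slip.
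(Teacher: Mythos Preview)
Your proposal is correct and follows essentially the same approach as the paper's proof: pointwise verification of TV preservation via the chain-rule identity $D\nexxt\primalpredict(\xi)=\grad\thisv(\xi)^{*}D\thisx(\thisv(\xi))$ together with the Cauchy--Schwarz equality case, the change of variables $\zeta=\thisv(\xi)$ for the norm bounds, and an explicit formula for $T_k^{*}$ followed by the cancellation $((\grad\thisv)^{-1})^{*}(\grad\thisv)^{*}=\Id$ to reduce $T_k^{*}DW_k$ to a pointwise scalar multiple of $D$. Your bookkeeping in part (d)---distinguishing $\Omega_0$ from its image $\thisv(\Omega_0)$---is in fact slightly more careful than the paper's, which conflates the two sets and thereby picks up the extra $\tfrac{\eta_k}{\eta_{k+1}}$ term in $\tilde C_k$ from a contribution that, under your identification, vanishes.
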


\begin{proof}
    Note that due to $\iprod{D\this{\optx}}{\this{\opty}} \le \norm{D\this{\optx}}_{2,1}\norm{\this{\opty}}_{2,\infty}$ the strict variant of the total variation preservation \eqref{eq:TV_predictors:preservation} can now be written pointwise
    \begin{multline}
        \label{eq:TV_predictors:preservation:l2}
        \alpha|D \thisx(\xi)|=\iprod{D \thisx(\xi)}{\thisy(\xi)} \text{ and } |\thisy(\xi)| \leq \alpha \text{ for a.e. }\xi \in \Omega
        \\
        \implies
        \alpha|D\nexxt\primalpredict(\xi)| =  \iprod{D\nexxt\primalpredict(\xi)}{\nexxt\dualpredict(\xi)}
        \text{ and } |\nexxt\dualpredict(\xi)| \leq \alpha
        \text{ for a.e. }\xi \in \Omega.
    \end{multline}

    Let $\xi \in \Omega$ be an arbitrary point where the antecedent \eqref{eq:TV_predictors:preservation:l2} holds.
    When $|D\thisx (\thisv(\xi))| =0$, we deduce that $|D\nexxt\primalpredict(\xi)| =0$ and so $\alpha|D\nexxt\primalpredict(\xi)| =  \iprod{D\nexxt\primalpredict(\xi)}{\nexxt\dualpredict(\xi)}$ holds trivially.
    Suppose then that $|D\thisx (\thisv(\xi))| \neq 0$.
    Since  $0<\alpha|D \thisx(\zeta)|=\iprod{D \thisx(\zeta)}{\thisy(\zeta)}$ with $\abs{\thisy(\zeta)}\le\alpha$ for all $\zeta$, we must have $\thisy(\zeta)=\alpha|D \thisx(\zeta)|^{-1}D \thisx(\zeta)$.
    Taking $\zeta=\thisv(\xi)$, we obtain $\thisy(\thisv(\xi))=\alpha|D \thisx(\thisv(\xi))|^{-1}D \thisx(\thisv(\xi))$. By definition of $\this\primalpredict$ and $\this\dualpredict$ we obtain, as required
    \begin{equation*}
        \begin{aligned}[t]
        \iprod{D\nexxt\primalpredict(\xi)}{\nexxt\dualpredict(\xi)}
                &
                =   \adaptiprod{\grad \thisv(\xi)^* D \thisx (\thisv(\xi))}{\this{t}(\xi)y^k(v^k(\xi))}
                \\
                &
                = \adaptiprod{D \thisx (\thisv(\xi))}{\frac{| \grad \thisv(\xi)^* D \thisx (\thisv(\xi))| }{|D\thisx (\thisv(\xi))|}  \thisy(\thisv(\xi))}
                \\
                &
                = | \grad \thisv(\xi)^* D \thisx (\thisv(\xi))| \adaptiprod{\frac{D \thisx (\thisv(\xi))}{|D\thisx (\thisv(\xi))|}}{\frac{\alpha D \thisx(\thisv(\xi)) }{|D\thisx (\thisv(\xi))|}}
                \\
                &
                = \alpha|D\nexxt\primalpredict(\xi)|.
        \end{aligned}
    \end{equation*}
    Thus the fist part of the consequent of \eqref{eq:TV_predictors:preservation:l2} holds, proving non-strict total variation preservation.
    If $\abs{(\grad \thisv(\xi))^{-1}} \abs{\grad \thisv(\xi)} \le 1$, we have $\abs{\this t(\xi)} \le 1$, so $|\nexxt{\dualpredict(\xi)}| \le \alpha$ for all $\xi$, so the second part of the consequent of \eqref{eq:TV_predictors:preservation:l2} holds, proving strict total variation preservation.
    The same argumentation applied to analogous true temporal couplings, proves \eqref{eq:TV_predictors:preservation:strong:coupling}.

    Moreover, with $\Omega_0 = \{ \xi \in \Omega \mid D\thisx(\thisv(\xi)) = 0\}$,
    \begin{equation*}
        \begin{aligned}
        \norm{T_k}^2
            &
            = \sup_{\|\thisy\| = 1}\left(
                \int_{\Omega \setminus \Omega_0}\left| \frac{| \grad \thisv(\xi)^* D \thisx (\thisv(\xi))| }{|D\thisx (\thisv(\xi))|}(\grad \thisv(\xi))^{-1} \thisy(\thisv(\xi)) \right|^2 d\xi
                + \int_{\Omega_0}\bigl| \thisy(\thisv(\xi)) \bigr|^2 d\xi
            \right)
            \\
            &
            \leq \sup_{\|\thisy\| = 1}  \int_{\Omega} \bigl|\thisy(\thisv(\xi))\bigr|^2 d\xi
            \\
            &
            = \sup_{\|\thisy\| = 1}  \int_{\Omega} \bigl|\thisy(\xi)\bigr|^2 \bigl|\det \grad \inv{(\thisv)}(\xi)\bigr| d\xi
            \leq \Lambda_{ \InvDisplacements_k}.
        \end{aligned}
    \end{equation*}
    The upper bound for $\norm{W_k}$ is obtained by a similar computation. For any $\nexxt{z} \in Y_{k+1}$ and $\xi \in \Omega$, we have
    \[
        \begin{aligned}
        \int_{\Omega}\iprod{\nexxt z(\xi)}{t^k(\xi)\thisy(\thisv(\xi))}d\xi
        &
        =
        \int_{\Omega}\iprod{t^k(\xi)^{\ast} \nexxt z(\xi)}{\thisy(\thisv(\xi))}d\xi
        \\
        &
        =
        \int_{\Omega}\int\iprod{t^k((\thisv)^{-1}(\xi))^{\ast} \nexxt z((\thisv)^{-1}(\xi))}{\thisy(\xi)} |\det \grad \inv{(\thisv)}(\xi)|d\xi
        \\
        &
        =
        \int_\Omega\iprod{|\det \grad \inv{(\thisv)}(\xi)| t^k((\thisv)^{-1}(\xi))^{\ast} \nexxt z((\thisv)^{-1}(\xi))}{\thisy(\xi)}d\xi
        \end{aligned}
    \]
    for
    \[
        \this{t}(\xi)^{\ast} =
        \begin{cases}
            \frac{| \grad \thisv(\xi)^* D \thisx (\thisv(\xi))| }{|D\thisx (\thisv(\xi))|}((\grad \thisv(\xi))^{-1})^{\ast},
            & \ |D\thisx (\thisv(\xi))| \neq 0,
            \\
            \Id,
            & \ |D\thisx (\thisv(\xi))| = 0.
        \end{cases}
    \]
    Hence
    \[
        \iprod{\nexxt z}{T_k \thisy}
        = \int_{\Omega} \iprod{\nexxt z(\xi)}{t^k(\xi)\thisy(\thisv(\xi))} \d\xi
        = \int_{\Omega} \iprod{(T_k^*\nexxt z)(\xi)}{\thisy} \d\xi
    \]
    for
    \[
        (T_k^{\ast} \nexxt z)(\xi) =
        |\det \grad \inv{(\thisv)}(\xi)| \, \this{t}(\inv{(\thisv)}(\xi))^{\ast} \nexxt z(\inv{(\thisv)}(\xi)).
    \]
    Using this expression for $T_k^{\ast}$, for any $\xi \in \Omega\setminus\Omega_0$ we obtain
    \[
        \begin{aligned}
        [T_k^{\ast} D W_k \thisx] (\xi)
        &
        =
        T_k^{\ast}[\xi \mapsto \grad \thisv(\xi)^{\ast} D\thisx(\thisv(\xi))](\xi)
        \\
        &
        =
        \frac{|\det \grad \inv{(\thisv)}(\xi)|| \grad \thisv(\inv{(\thisv)}(\xi))^* D \thisx (\xi)| }{|D\thisx(\xi)|}  D\thisx(\xi),
        \end{aligned}
    \]
    while for any $\xi \in \Omega_0$ we have $[T_k^{\ast} D W_k \thisx] (\xi)=0$.
    Thus
    \begin{equation*}
        \begin{aligned}
        \biggl\|\frac{\eta_k}{\eta_{k+1}}D - T_k^{\ast}DW_k\biggr\|^2
            &
            =
            \sup_{\|\thisx\| = 1} \biggl( \int_{\Omega\setminus\Omega_0}\left|\frac{\eta_k}{\eta_{k+1}}D\thisx(\xi) - [T_k^{\ast} D W_k \thisx] (\xi)\right|^2 d\xi
            +
            \int_{\Omega_0}\left|\frac{\eta_k}{\eta_{k+1}}D\thisx(\xi)\right|^2 d\xi \biggr)
            \\
            &
            = \sup_{\|\thisx\| = 1} \biggl( \int_{\Omega\setminus \Omega_0} \left| \frac{\eta_k}{\eta_{k+1}}-   \frac{|\det \grad \inv{(\thisv)}(\xi)|| \grad \thisv(\inv{(\thisv)}(\xi))^* D \thisx (\xi)| }{|D\thisx(\xi)|}  \right|^2  |D\thisx(\xi) |^2 d\xi
            \\
            \MoveEqLeft[-1]
            +
            \int_{\Omega_0}\left|\frac{\eta_k}{\eta_{k+1}}\right|^2 \left|D\thisx(\xi)\right|^2 d\xi  \biggr)
            \\
            &
            \leq \tilde{C}_k  \sup_{\|\thisx\| = 1} \int_{\Omega} |D\thisx(\xi)|^2 d\xi
            = \tilde{C}_k \norm{D}^2. \qedhere
        \end{aligned}
    \end{equation*}
\end{proof}

\begin{remark}
Note that if  $\eta_k = \eta_{k+1}$ and $\thisv = \Id$ (i.e. there is no prediction),  then we can choose $\tilde{C}_k = 1$.
\end{remark}

\begin{remark}[Strict Greedy predictor]
    \label{rem:predictors:strictgreedy}
    During the review process, we realised that a slightly modified version of the dual predictor of \cref{lemma:predictors:pointwise-l2} strictly preserves total variation.
    Indeed, take
    \[
        \nexxt{\breve y}(\xi) \defeq [T_k y^k](\xi) \defeq \frac{\iprod{D\this x(\this v(\xi))}{\this y(\this v(\xi))}}{|D\this x(\this v(\xi))|} \frac{D\nexxt{\breve x}(\xi)}{|D\nexxt{\breve x}(\xi)|},
    \]
    where we can replace $D\nexxt{\breve x}(\xi)/|D\nexxt{\breve x}(\xi)|$ by any unit vector if $D\nexxt{\breve x}(\xi)=0$.
    Then $|\nexxt{\breve y}(\xi)| \le |\this y(\this v(\xi))| \le \alpha$ and
    $\iprod{D\nexxt{\breve x}(\xi)}{\nexxt{\breve y}(\xi)} = \alpha|D\nexxt{\breve x}(\xi)|$ if  $\iprod{D\this x(\this v(\xi))}{\this y(\this v(\xi))}=\alpha|D\this x(\this v(\xi))|$.

    We call this the Strict Greedy predictor, as $\nexxt{\breve y}(\xi)$ has the same direction as $D\nexxt{\breve x}(\xi)$ with a scaling baed on the total variation attainment discrepancy of the previous step.
    Although this is a theoretically highly satisfying predictor, we do not include a derivation of the prediction error bounds, as the practical performance is comparable to our other proposed predictors.
\end{remark}

\subsubsection*{Pointwise preservation in $L^{2}$ by rotation}

An alternative way to preserve total variation following a primal predictor is by implementing a suitable rotation on the dual variable. We use this idea to construct a dual predictor as in the following lemma, which uses the same spaces $X_k \subset L^2(\Omega)$ and $Y_k \subset L^2(\Omega; \R^{m})$ as \cref{lemma:predictors:pointwise-l2}.

\begin{lemma}
    \label{lemma:predictor-rotation}
    Let $\InvDisplacements_k \subset H^1(\Omega; \Omega)$ be a set of bijective displacement fields satisfying $\inv{(\thisv)} \in H^1(\Omega; \Omega)$ for all $\thisv \in \InvDisplacements_k$, and
    \[
        \Lambda_{ \InvDisplacements_k} \defeq \sup_{\thisv \in \InvDisplacements_k, \xi \in \Omega }\abs{\det{\grad \inv{(\thisv)}(\xi)}} < \infty.
    \]
    Let $\thisx \in X_k$, $\thisy \in Y_k$, $\thisv \in \InvDisplacements_k$,  and  $(\nexxt\primalpredict,\nexxt\dualpredict) = P_k(\thisx,\thisy) = (W_k\thisx, T_k\thisy + b_k)$ where the primal predictor $W_k: X_k \to X_{k+1}$ and the affine dual predictor consisting of $T_k: Y_k \to Y_{k+1}$ and $b_k \in Y_{k+1}$, are defined pointwise for $\xi \in \Omega$ by
    \begin{equation}
        \label{lemma:eq:TVpreserve_rotation}
        (W_k\thisx)(\xi) = (\thisx \circ \thisv)(\xi),
        \quad
        (T_k\thisy)(\xi) =
        \begin{cases}
            R_{\theta_{\xi}}(\thisy(\xi)), & D \thisx(\xi) \ne 0, \\
            0, & D \thisx(\xi) = 0 \\
        \end{cases}
    \end{equation}
    and
    \[
        b_k(\xi)
        \defeq
        \begin{cases}
            \alpha\frac{D\nexxt\primalpredict(\xi)}{\norm{D\nexxt\primalpredict(\xi)}_2}, & D \thisx(\xi) = 0, D\nexxt\primalpredict(\xi) \ne 0, \\
            0, & \text{otherwise,}
        \end{cases}
    \]
    where $R_{\theta_{\xi}}$ is a rotation operator defined by an oriented angle $\theta_{\xi}$ from $D\thisx(\xi)$ to $D\nexxt\primalpredict(\xi)$ such that
    \[
        D\nexxt\primalpredict(\xi) = c_{\xi} R_{\theta_{\xi}}(D\thisx(\xi)) \quad\text{for some}\quad  c_{\xi} \geq 0.
    \]
    This predictor is strictly total variation preserving, i.e., satisfies the strict variant of \eqref{eq:TV_predictors:preservation}. Moreover, true temporal couplings constructed analogously based on a true displacement field $\this{\bar v}$, satisfy \eqref{eq:TV_predictors:preservation:strong:coupling}.
    Moreover, $\norm{W_k}^2 \leq \Lambda_{\InvDisplacements_k}$, $\norm{T_k}^2=1$, and
    \[
        \adaptnorm{\frac{\eta_k}{\eta_{k+1}}K_k - T_k^{\ast}K_{k+1}W_k}^2 \leq \tilde{C}_k\norm{D}^2 \quad \text{for} \quad \tilde{C}_k \defeq  \sup_{\xi  \in \Omega\setminus\Omega_0} \left| \frac{\eta_k}{\eta_{k+1}} - c_{\xi} ,\right|^2
    \]
    where $\Omega_0 \defeq \{\xi \in \Omega \mid D\thisx(\xi)=0\}$.
\end{lemma}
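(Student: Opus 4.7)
The proof parallels \cref{lemma:predictors:pointwise-l2}: we verify the pointwise strict analogue of \eqref{eq:TV_predictors:preservation} (written as \eqref{eq:TV_predictors:preservation:l2} in the proof of that lemma), and then compute the operator norms and the compatibility bound by direct pointwise calculation, exploiting that $R_{\theta_\xi}$ is an isometry at each $\xi$ with $R_{\theta_\xi}^{\ast}=R_{\theta_\xi}^{-1}=R_{-\theta_\xi}$.

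For strict total variation preservation, fix $\xi \in \Omega$ at which the antecedent of \eqref{eq:TV_predictors:preservation:l2} holds. If $D\thisx(\xi) \ne 0$, the antecedent forces $\thisy(\xi)=\alpha D\thisx(\xi)/|D\thisx(\xi)|$, and since $R_{\theta_\xi}$ preserves norms,
\[
    |\nexxt{\dualpredict}(\xi)|=|R_{\theta_\xi}(\thisy(\xi))|=|\thisy(\xi)|\le \alpha,
\]
while by the defining identity $D\nexxt\primalpredict(\xi)=c_\xi R_{\theta_\xi}(D\thisx(\xi))$,
\[
    \iprod{D\nexxt\primalpredict(\xi)}{\nexxt{\dualpredict}(\xi)}
    = c_\xi\iprod{R_{\theta_\xi}(D\thisx(\xi))}{\alpha R_{\theta_\xi}(D\thisx(\xi))/|D\thisx(\xi)|}
    =\alpha c_\xi|D\thisx(\xi)|
    =\alpha|D\nexxt\primalpredict(\xi)|.
\]
If instead $D\thisx(\xi)=0$, then either $D\nexxt\primalpredict(\xi)=0$ (so both sides of the duality identity vanish and $\nexxt{\dualpredict}(\xi)=0$) or $D\nexxt\primalpredict(\xi)\ne 0$, in which case $\nexxt{\dualpredict}(\xi)=b_k(\xi)=\alpha D\nexxt\primalpredict(\xi)/|D\nexxt\primalpredict(\xi)|$ has norm $\alpha$ and satisfies $\iprod{D\nexxt\primalpredict(\xi)}{\nexxt{\dualpredict}(\xi)}=\alpha|D\nexxt\primalpredict(\xi)|$. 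The exact same argument, applied with $\thisv$ replaced by $\this{\bar v}$, yields \eqref{eq:TV_predictors:preservation:strong:coupling}.

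The bound $\|W_k\|^2\le \Lambda_{\InvDisplacements_k}$ follows from the change-of-variables computation already carried out in \cref{lemma:predictors:pointwise-l2}. For $\|T_k\|^2=1$, note that $|(T_k\thisy)(\xi)|\le|\thisy(\xi)|$ pointwise (with equality on $\Omega\setminus\Omega_0$), so $\|T_k\|\le 1$; testing against any unit $\thisy$ supported on $\Omega\setminus\Omega_0$ attains the bound because rotations are isometries.

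The main remaining task is the compatibility bound. Since $T_k$ acts pointwise as either $R_{\theta_\xi}$ or $0$, its adjoint acts pointwise as $R_{\theta_\xi}^{\ast}=R_{-\theta_\xi}$ or $0$. Using $[K_{k+1}W_k\thisx](\xi)=D\nexxt\primalpredict(\xi)=c_\xi R_{\theta_\xi}(D\thisx(\xi))$ for $\xi\in\Omega\setminus\Omega_0$ gives
\[
    [T_k^{\ast}K_{k+1}W_k\thisx](\xi)=R_{-\theta_\xi}\bigl(c_\xi R_{\theta_\xi}(D\thisx(\xi))\bigr)=c_\xi D\thisx(\xi),
\]
while for $\xi\in\Omega_0$ both $[T_k^{\ast}K_{k+1}W_k\thisx](\xi)$ and $[K_k\thisx](\xi)=D\thisx(\xi)$ vanish. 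Hence
\[
    \Bignorm{\tfrac{\eta_k}{\eta_{k+1}}K_k-T_k^{\ast}K_{k+1}W_k}^2
    =\sup_{\|\thisx\|=1}\int_{\Omega\setminus\Omega_0}\Bigabs{\tfrac{\eta_k}{\eta_{k+1}}-c_\xi}^2|D\thisx(\xi)|^2\,d\xi
    \le \tilde{C}_k\|D\|^2,
\]
which is the claim. The only subtle point is the well-definedness of $\theta_\xi$ when $D\thisx(\xi)\ne 0$ but $D\nexxt\primalpredict(\xi)=0$: the rotation is then not unique, but any choice works since $c_\xi=0$ makes the subsequent computations independent of $\theta_\xi$; no additional effort is needed.
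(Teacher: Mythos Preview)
Your proof is correct and follows essentially the same approach as the paper: pointwise verification of \eqref{eq:TV_predictors:preservation:l2} using the isometry of $R_{\theta_\xi}$, followed by the identical computation $T_k^{\ast}DW_k\thisx(\xi)=c_\xi D\thisx(\xi)$ for the compatibility bound. You give slightly more detail in the $D\thisx(\xi)=0$ subcase and add the helpful observation about non-uniqueness of $\theta_\xi$ when $c_\xi=0$, but the argument is otherwise the paper's own.
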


\begin{proof}
    Let $\xi \in \Omega$ satisfy the antecedent of the $L^2$ characterisation \eqref{eq:TV_predictors:preservation:l2} of the strict variant of \eqref{eq:TV_predictors:preservation}.
    If $D \thisx(\xi) = 0$, the consequent of \eqref{eq:TV_predictors:preservation:l2} is clearly true for $\xi$ by construction. So suppose $D \thisx(\xi) \ne 0$.
    Since $\nexxt\dualpredict(\xi) =  T_k\thisy(\xi) = R_{\theta_{\xi}}(\thisy(\xi))$ and $D\nexxt\primalpredict(\xi) = c_{\xi} R_{\theta_{\xi}}(D\thisx(\xi))$  for a $c_{\xi} \geq 0$ such that $ |D\nexxt\primalpredict(\xi)| = c_{\xi}|D\thisx(\xi)|$, then, as required by \eqref{eq:TV_predictors:preservation:l2},
    \begin{equation*}
        \begin{aligned}
        \iprod{D\nexxt\primalpredict(\xi)}{\nexxt\dualpredict(\xi)}
            &
            = \iprod{c_{\xi} R_{\theta_{\xi}}(D\thisx(\xi))}{R_{\theta_{\xi}}(\thisy(\xi))}
            \\
            &
            = c_{\xi}\iprod{D\thisx(\xi)}{\thisy(\xi)}
            = c_{\xi}\alpha|D \thisx(\xi)|
            = \alpha|D\nexxt\primalpredict(\xi)|.
        \end{aligned}
    \end{equation*}
    Clearly $|\nexxt{\dualpredict(\xi)}| \le \alpha$, so the consequent of \eqref{eq:TV_predictors:preservation:l2} holds for $\xi$, as required. The same argumentation applied to analogously constructed true temporal couplings, proves \eqref{eq:TV_predictors:preservation:strong:coupling}.

    Computing for upper bounds of $\norm{W_k}^2$ and $\norm{T_k}^2$ is straightforward. Moreover, by the definition of predictors, for any $\xi \in \Omega$ with $D\thisx(\xi) \ne 0$, we have
    \[
        \begin{aligned}
        T_k^{\ast} D W_k \thisx (\xi)
        &
        =
        T_k^{\ast} D \nexxt\primalpredict(\xi)
        =
        T_k^{\ast}(c_{\xi} R_{\theta_{\xi}}(D\thisx(\xi))
        \\
        &
        =
        R_{-\theta_{\xi}}(c_{\xi} R_{\theta_{\xi}}(D\thisx(\xi))
        =
        c_{\xi} D\thisx(\xi).
        \end{aligned}
    \]
    On the other hand, if $D\thisx(\xi) = 0$, by the construction of $T_k^{\ast}$, clearly $T_k^{\ast} D W_k \thisx (\xi)=0$.
    Thus
    \begin{equation*}
        \begin{aligned}
        \adaptnorm{\frac{\eta_k}{\eta_{k+1}}K_k - T_k^{\ast}K_{k+1}W_k}^2
        &
        =
        \sup_{\|\thisx\| = 1} \int_{\Omega}\left|\frac{\eta_k}{\eta_{k+1}}D\thisx(\xi) - T_k^{\ast} D W_k \thisx (\xi) \right|^2 d\xi
        \\
        &
        =
        \sup_{\|\thisx\| = 1}\int_{\Omega \setminus \Omega_0}\left|\frac{\eta_k}{\eta_{k+1}}D\thisx(\xi) - c_{\xi} D\thisx(\xi)\right|^2 d\xi
        \leq
        \tilde{C}_k \|D\|^2. \qedhere
        \end{aligned}
    \end{equation*}
\end{proof}

\begin{remark}
If $\eta_k = \eta_{k+1}$ and $c_{\xi} = 1$ (i.e. $\norm{D\nexxt\primalpredict(\xi)} = \norm{D\thisx(\xi)}$) for all $\xi$, then we can choose $\tilde{C}_k = 0$.
\end{remark}

\subsubsection*{Global preservation for general operators}

We now study a primal-dual predictor that preserves total variation (defined with a general operator $D$) vectorwise.
To do so, we need a left-invertible modification $\breve D$ of the (discretised, differential) operator $D$.
We let $X_k$ be a Hilbert space, $Y_k \subset L^2(\Omega; \R^m)$, and $K_k = D \in \linear(X_k;Y_k)$ for all $k$ for an arbitrary domain $\Omega$, such that the $2,1$-norm is defined.

\begin{lemma}
    \label{lemma:predictors:global}
    Let $\thisx \in X_k, \thisy \in Y_k$,  $W_k \in \linear(X_k;X_{k+1})$, $D,\breve D \in \linear(X_k;Y_k)$.
    Suppose $(\nexxt\primalpredict,\nexxt\dualpredict) = P_k(\thisx,\thisy)$ where, for $Q_k \defeq  \norm{\breve D\nexxt\primalpredict}_{2,1} \norm{ D\thisx}_{2,1}^{-1} \Id \in \linear(Y_k; Y_k)$, the primal-dual predictor
    \begin{equation}
        \label{lemma:eq:TVpreserve_operator}
        P_k(\thisx,\thisy)= (W_k \thisx ,  \breve{D} \thisz ) \text{ for a } \thisz \in X_k \text{ satisfying } W_k^{\ast}\breve{D}^{\ast}\breve{D}\thisz = D^{\ast}Q_k\thisy
    \end{equation}

    This predictor is total variation preserving, i.e., satisfies \eqref{eq:TV_predictors:preservation}.
    If $\,W_k$ is invertible and $\range \breve{D}^*= \range \inv{(W_k^*)}D^*$, then we can write \eqref{lemma:eq:TVpreserve_operator} in explicit form as
    \[
        P_k(\thisx,\thisy)= \bigl(W_k \thisx ,  T_k \thisy \bigr) \text{ where } T_k =\breve{D} \inv{(\breve{D}^*\breve{D})} \inv{(W_k^*)} D^*Q_k .
    \]
    Furthermore,
    \begin{align*}
        \norm{T_k}^2
        &
        \le
        \frac{\norm{\breve{D} \inv{(\breve{D}^*\breve{D})} \inv{(W_k^*)} D^*} \norm{\breve D\nexxt\primalpredict}_{2,1}}{\norm{ D\thisx}_{2,1}}
        \quad \text{and}
        \\
        \adaptnorm{\frac{\eta_k}{\eta_{k+1}}K_k - T_k^{\ast}K_{k+1}W_k}^2
        &
        \le
        \tilde{C}_k \defeq \left\|\frac{\eta_k}{\eta_{k+1}}D -  Q_kD \inv{W_k} \inv{(\breve D^*\breve D)} \breve{D}^{\ast} D W_k \right\|^2.
    \end{align*}
\end{lemma}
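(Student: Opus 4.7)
The plan is to prove the lemma in its three parts in sequence: first the total variation preservation property, then the explicit form of $T_k$ under the invertibility and range assumptions, and finally the two operator-norm estimates by direct calculation. All steps reduce to routine manipulations once the defining equation $W_k^{\ast}\breve D^{\ast}\breve D\thisz = D^{\ast}Q_k\thisy$ is exploited via adjoint pairings.

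For the preservation property, I would start from the antecedent $\iprod{D\thisx}{\thisy} = \alpha\|D\thisx\|_{2,1}$ and $\|\thisy\|_{2,\infty} \le \alpha$ of \eqref{eq:TV_predictors:preservation}. The central computation would be
\[
  \iprod{\breve D\nexxt\primalpredict}{\nexxt\dualpredict}
  = \iprod{\breve D W_k\thisx}{\breve D\thisz}
  = \iprod{W_k\thisx}{\breve D^{\ast}\breve D\thisz}
  = \iprod{\thisx}{W_k^{\ast}\breve D^{\ast}\breve D\thisz}
  = \iprod{\thisx}{D^{\ast}Q_k\thisy}
  = Q_k\iprod{D\thisx}{\thisy},
\]
where in the penultimate equality I use the defining relation for $\thisz$, and in the last I use that $Q_k$ is a scalar multiple of the identity. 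Substituting the antecedent yields $Q_k\alpha\|D\thisx\|_{2,1}$, which by the very definition of $Q_k$ equals $\alpha\|\breve D\nexxt\primalpredict\|_{2,1}$, establishing the preservation in the form appropriate to the left-invertible modification $\breve D$ of $D$.

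For the explicit form of $T_k$, left-invertibility of $\breve D$ makes $\breve D^{\ast}\breve D$ invertible, and invertibility of $W_k$ makes $(W_k^{\ast})^{-1}$ well defined. The assumption $\range\breve D^{\ast} = \range (W_k^{\ast})^{-1}D^{\ast}$, combined with $\range\breve D^{\ast}\breve D = \range\breve D^{\ast}$, then guarantees that $(W_k^{\ast})^{-1}D^{\ast}Q_k\thisy$ lies in the range on which $(\breve D^{\ast}\breve D)^{-1}$ is meaningfully defined, so one can solve uniquely $\thisz = (\breve D^{\ast}\breve D)^{-1}(W_k^{\ast})^{-1}D^{\ast}Q_k\thisy$, whence $T_k = \breve D(\breve D^{\ast}\breve D)^{-1}(W_k^{\ast})^{-1}D^{\ast}Q_k$.

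For the norm bounds, writing $T_k = Q_k A$ with $A \defeq \breve D(\breve D^{\ast}\breve D)^{-1}(W_k^{\ast})^{-1}D^{\ast}$ gives $\|T_k\|\le\|Q_k\|\|A\|$, and substituting $\|Q_k\|=\|\breve D\nexxt\primalpredict\|_{2,1}/\|D\thisx\|_{2,1}$ yields the stated estimate. For the compatibility bound, taking adjoints (and using that $Q_k$ is self-adjoint as a scalar operator) yields $T_k^{\ast} = Q_k D\inv{W_k}\inv{(\breve D^{\ast}\breve D)}\breve D^{\ast}$; plugging into $\tfrac{\eta_k}{\eta_{k+1}}K_k - T_k^{\ast}K_{k+1}W_k$ with $K_k=K_{k+1}=D$ and taking norms reads off $\tilde C_k$ directly. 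I expect the main delicate point to be the range bookkeeping in the second step — confirming that the implicit definition of $T_k$ is globally consistent on $Y_k$ rather than only on a subspace — since the remaining parts are straightforward unfoldings of definitions, adjoints, and the triangle inequality.
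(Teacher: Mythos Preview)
Your proposal is correct and follows essentially the same route as the paper's proof: the same adjoint-pairing chain for the preservation identity, the same use of the range condition to invert $W_k^{\ast}\breve D^{\ast}\breve D$ and read off $T_k$, and the same direct substitution of $T_k$ and $T_k^{\ast}$ to obtain the two norm bounds. If anything, your remark about the range bookkeeping is slightly more explicit than the paper's one-line justification.
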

\begin{proof}
    Using the definition of predictors, we have
    \begin{equation*}
        \begin{aligned}
        \iprod{ \breve{D} \nexxt\primalpredict } {\nexxt\dualpredict}
        &
        = \iprod{\breve{D} W_k \thisx } { \breve{D} \thisz }
        =  \iprod{ \thisx } {W_k^{\ast}\breve{D}^{\ast}\breve{D}\thisz }
        \\
        &
        =  \iprod{ \thisx } {D^{\ast}Q_k\thisy}
        =  \iprod{D  \thisx }{Q_k\thisy}
        =  \frac{\norm{\breve D\nexxt\primalpredict}_{2,1}}{ \norm{ D\thisx}_{2,1}}\iprod{D  \thisx }{\thisy}
        = \alpha \norm{\breve D\nexxt\primalpredict}_{2,1},
    \end{aligned}
    \end{equation*}
    where in the last equality we used $\alpha\norm{D\thisx}_{2,1}= \iprod{D\thisx}{\thisy}$. Now, let $W_k$ be invertible. Then using $\range \breve{D}^* = \range \inv{(W_k^*)}D^*$ and \eqref{lemma:eq:TVpreserve_operator} we have
    \[
    W_k^{\ast}\breve{D}^{\ast}\breve{D}\thisz = D^{\ast}Q_k\thisy \,
    \implies \,
    T_k =  \breve{D} \inv{(\breve{D}^*\breve{D})} \inv{(W_k^*)} D^*Q_k.
    \]
    With this, the bounds for $\norm{T_k}^2$ and $\norm{\frac{\eta_k}{\eta_{k+1}}K_k - T_k^{\ast}K_{k+1}W_k}^2$ follow immediately.
\end{proof}

\begin{example}
    \label{ex:predictors:d}
    For total variation, it is natural to take $D$ with (discrete) Neumann boundary conditions. This operator is not left-invertible ($\range D^* \ne X_k$): constant functions are in the kernel.
    To form a left-invertible $\breve D$, for example, with forward differences discretisation, it suffices to take it with (discrete) Dirichlet boundary conditions.
    If constant functions are an invariant subspace of $\inv{(W_k^*)}$, then also $D$ with forward differences discretisation and (discrete) Neumann boundary conditions satisfies the invertibility condition.
\end{example}

\begin{remark}
    If $D = \breve{D}$, then
    \[
        \adaptnorm{\frac{\eta_k}{\eta_{k+1}}K_k - T_k^{\ast}K_{k+1}W_k}^2
        =
        \adaptnorm{\frac{\eta_k}{\eta_{k+1}}D -  Q_kD}^2 =  \left|
            \frac{\eta_k}{\eta_{k+1}} -  \frac{\norm{\breve D\nexxt\primalpredict}_{2,1}}{ \norm{ D\thisx}_{2,1}}
        \right|^2\norm{D}^2.
    \]
    Thus, if  we further set $\eta_k = \eta_{k+1}$ and $W_k = \Id$ (i.e., there are no predictions), then we can take $\tilde{C}_k =0$.
\end{remark}

\subsection{Inner product preserving predictors}
\label{ssec:IP_predictors}

Instead of preserving the total variation after each prediction, we can impose that only the angles are preserved, that is,
\begin{equation}
    \label{eq:innerpreserve}
    \iprod{D \nexxt{\primalpredict}}{\nexxt{\dualpredict}} = \iprod{D \thisx}{\thisy}
\end{equation}
for all $k$.  We refer to such predictors as \emph{inner product preserving}. This is motivated by the fact that, in the static case, the inner product $\iprod{D \nexxt{x}}{\nexxt{y}}$ converges to the total variation. Thus, if we are given a primal predictor, we mitigate the error caused by the predictions by computing for a dual prediction such that equality in \eqref{eq:innerpreserve} is preserved.

We start by noting that the predictors from \cref{ssec:TV_predictors} preserve inner products by a simple rescaling of the dual predictors:
\begin{itemize}
    \item The predictors of \cref{lemma:predictors:pointwise-l2} with $\this{t}(\xi) = |\det \grad\thisv(\thisv(\xi))| \inv{(\grad \thisv({\xi}))}$.
    \item The rotating predictors of \cref{lemma:predictor-rotation} with $(T_k\thisy)(\xi) =  c_{\xi}^{-1}R_{\theta_{\xi}}(\thisy(\xi))$.
    \item The abstract predictors of \cref{lemma:predictors:global} with $Q_k = \Id$.
\end{itemize}
We omit the proofs and bounds, as we consider inner product preservation a weaker result than total variation preservation.

\subsubsection*{(Greedy) Component-wise preservation in finite dimensions}

An alternative approach to guarantee the preservation of the inner product is a component-wise update of the dual variable. This strategy directly enforces the point-wise equality $ \iprod{D \nexxt{\primalpredict}(\xi)}{\nexxt{\dualpredict}(\xi)} = \iprod{D \thisx(\xi)}{\thisy(\xi)}$ down to each individual component of $\xi$. Although straightforward, it does not consider the potential impact on other components, hence the prediction is greedy. However, as will be shown later, this method can still exhibit good numerical performance in practice.

\begin{lemma}
    \label{lemma:predictor-adhoc}
    Let $\thisx \in \R^n$, $\thisy \in \R^{m}$ and  $D \in \R^{m\times n}$. For $W_k \in \R^{n \times n}$, if $(\nexxt\primalpredict,\nexxt\dualpredict) = P_k(\thisx,\thisy) = (W_k\thisx, T_k\thisy)$ where the primal prediction $\nexxt\primalpredict = W_k\thisx$ and, for each component $i$, the dual prediction
    \begin{equation}
        \label{lemma:eq:innerpreserve_discrete}
            (T_k\thisy)_i =
            \begin{cases}
            \frac{(D\thisx)_i}{(D\nexxt\primalpredict)_i} \thisy_i,
            & |(D\nexxt\primalpredict)_i| > \epsilon
            \\
            \thisy_i,
            & |(D\nexxt\primalpredict)_i |\leq \epsilon
        \end{cases},
    \end{equation}
    for some tolerance $\epsilon > 0$,
    then $ (D\nexxt\primalpredict )_i {\nexxt\dualpredict}_i =  (D \thisx)_i \thisy_i$  whenever $|(D\nexxt\primalpredict)_i| > \epsilon$.  Moreover, if $(D\nexxt\primalpredict)_i \neq 0$ for all $i$, then
    \begin{gather*}
        \norm{T_k}^2 \leq  \max_{i} \frac{|(D\thisx)_i|^2}{|(D\nexxt\primalpredict)_i|^{2}}
    \shortintertext{and}
        \adaptnorm{\frac{\eta_k}{\eta_{k+1}}K_k - T_k^{\ast}K_{k+1}W_k}^2 \leq \tilde{C}_k\norm{D}^2
        \quad {for} \quad
        \tilde{C}_k \defeq  \left|\frac{\eta_k}{\eta_{k+1}} - 1 \right|^2.
    \end{gather*}
\end{lemma}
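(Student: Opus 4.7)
The plan is to follow the three-step pattern used in the proofs of \cref{lemma:predictors:pointwise-l2,lemma:predictor-rotation,lemma:predictors:global}: verify inner product preservation by direct componentwise computation, obtain the bound on $\norm{T_k}^2$ by exploiting the diagonal structure of the dual predictor, and derive the compatibility bound via the identity $T_k^{\ast} D W_k \thisx = D \thisx$. Under the standing hypothesis $(D\nexxt\primalpredict)_i \ne 0$ for all $i$, we may choose $\epsilon$ smaller than $\min_i |(D\nexxt\primalpredict)_i|$ so that only the first branch of \eqref{lemma:eq:innerpreserve_discrete} is active; then $T_k$ is the real $m \times m$ diagonal matrix with entries $s_i \defeq (D\thisx)_i / (D\nexxt\primalpredict)_i$.

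For the preservation claim, I would simply multiply: $(D\nexxt\primalpredict)_i (\nexxt\dualpredict)_i = (D\nexxt\primalpredict)_i s_i \thisy_i = (D\thisx)_i \thisy_i$ whenever $|(D\nexxt\primalpredict)_i| > \epsilon$, which is precisely the desired componentwise identity. For the operator norm, I would use the elementary fact that a real diagonal matrix has operator norm equal to the maximum absolute value of its diagonal entries, so $\norm{T_k}^2 \le \max_i s_i^2 = \max_i |(D\thisx)_i|^2/|(D\nexxt\primalpredict)_i|^2$, as claimed.

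For the compatibility bound, I would exploit that $T_k^{\ast}$ coincides with $T_k$ as a real diagonal matrix. Then componentwise $(T_k^{\ast} D W_k \thisx)_i = s_i (D W_k \thisx)_i = s_i (D\nexxt\primalpredict)_i = (D\thisx)_i$, yielding $T_k^{\ast} D W_k \thisx = D \thisx$. Consequently $\bigl(\tfrac{\eta_k}{\eta_{k+1}} D - T_k^{\ast} D W_k\bigr)\thisx = \bigl(\tfrac{\eta_k}{\eta_{k+1}} - 1\bigr) D\thisx$, and the supremum-over-unit-vectors argument used in the earlier lemmas delivers the stated bound with $\tilde{C}_k = \bigl|\tfrac{\eta_k}{\eta_{k+1}} - 1\bigr|^2$.

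There is no substantive obstacle: each step is a one-line verification once the diagonal structure of $T_k$ has been made explicit. The only subtlety, shared with the preceding pseudo-affine predictor lemmas, is that $T_k$ (and potentially $W_k$) depends implicitly on the current primal iterate $\thisx$, so the symbol $\norm{\tfrac{\eta_k}{\eta_{k+1}} K_k - T_k^{\ast} K_{k+1} W_k}$ must be read in the same conventional sense as in \cref{lemma:predictors:pointwise-l2,lemma:predictor-rotation,lemma:predictors:global}, with the operators held fixed at the values determined by $\thisx$ when the supremum is taken.
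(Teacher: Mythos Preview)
Your proposal is correct and follows essentially the same approach as the paper: direct componentwise verification of the preservation identity, the diagonal-matrix bound for $\norm{T_k}$, and the key observation $T_k^{\ast} D W_k \thisx = D\thisx$ to obtain the compatibility estimate. The paper's own proof is in fact terser than yours, merely asserting the preservation and the $\norm{T_k}$ bound as ``straightforward'' and writing the one-line supremum computation for the compatibility bound.
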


\begin{proof}
    It is straightforward to show that  $(D\nexxt\primalpredict )_i {\nexxt\dualpredict}_i =  (D \thisx)_i \thisy_i$ whenever $(D\nexxt\primalpredict)_i \neq 0$. Now, if $(D\nexxt\primalpredict)_i \neq 0$ for all $i$, then
    \begin{equation*}
        \begin{aligned}
        \adaptnorm{\frac{\eta_k}{\eta_{k+1}}K_k - T_k^{\ast}K_{k+1}W_k}^2
        &
        =
        \sup_{\|\thisx\| = 1} \adaptnorm{\frac{\eta_k}{\eta_{k+1}} D \thisx - D\thisx}^2
        \leq \tilde{C}_k\|D\|^2.\qedhere
        \end{aligned}
    \end{equation*}
The computation for the upper bound of $\norm{T_k}^2$ is straightforward.
\end{proof}
\begin{remark}
By simply setting $\eta_k = \eta_{k+1}$,  we can choose $\tilde{C}_k = 0$.
\end{remark}

\subsection{Dual scaling predictors}
\label{ssec:dualscaling}

We now consider another type of predictor that uses pointwise scaling in the dual variable.
For these predictors, we do not have any explicit preservation results, although, as we shall see, they perform numerically remarkably well.

\begin{lemma}
    \label{lemma:predictor-dualscaling}
    For a domain  $\Omega \subset \R^n$, consider  the subspaces $X_k \subset L^2(\Omega)$ and $Y_k \subset L^2(\Omega; \R^n)$ equipped with the $L^2$-norm.  Let $\thisx \in X_k$, $\thisy \in Y_k$, $K_k = D \in \linear(X_k; Y_k)$  a differential operator,  and  $(\nexxt\primalpredict,\nexxt\dualpredict) = P_k(\thisx,\thisy) = (W_k\thisx, T_k\thisy)$ for any primal predictor $W_k \in \linear(X_k; X_{k+1})$, and dual predictor $T_k \in \linear(Y_k;Y_{k+1})$, defined pointwise by
    \begin{gather}
        \label{lemma:eq:dualscaling}
        (T_k\thisy)(\xi) = c_k(\xi) \thisy(\xi).
    \shortintertext{where $c_k\in L^{\infty}(\Omega)$. Then}
        \nonumber
        \norm{T_k}^2 = \opt{C}_k
        \quad \text{and} \quad
        \adaptnorm{\frac{\eta_k}{\eta_{k+1}}K_k - T_k^{\ast}K_{k+1}W_k}^2  \leq 2\left( \tilde{C}_k + \bar{C}_k \|\Id-W_k\|^2\right)\|D\|^2
    \shortintertext{for}
        \nonumber
        \tilde{C}_k \defeq  \sup_{\xi \in \Omega} \left| \frac{\eta_k}{\eta_{k+1}} - c_k(\xi) \right|^2
        \quad \text{and} \quad
        \opt{C}_k \defeq \displaystyle \sup_{\xi \in \Omega} |c_k(\xi)|^2.
    \end{gather}
\end{lemma}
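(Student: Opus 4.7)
The plan is to exploit the very simple, pointwise nature of the dual predictor $T_k$: since it is just multiplication by the scalar field $c_k(\xi)$, its adjoint is pointwise multiplication by $c_k(\xi)$ as well, and both the operator norm of $T_k$ and all relevant compositions reduce to integrals of squares against $c_k(\xi)^2$ that are trivially bounded by the pointwise supremum $\opt{C}_k$.

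First I would verify the identity $\norm{T_k}^2 = \opt{C}_k$ directly: for any $\thisy \in Y_k$,
\[
    \norm{T_k\thisy}^2 = \int_\Omega |c_k(\xi)|^2 |\thisy(\xi)|^2 \d\xi \le \opt{C}_k \norm{\thisy}^2,
\]
with equality achieved in the supremum by concentrating $\thisy$ on a sequence of sets where $|c_k|$ approaches its essential supremum. Next I would record that $(T_k^* \nexxt z)(\xi) = c_k(\xi) \nexxt z(\xi)$, which follows from Fubini/Tonelli applied to $\iprod{T_k \thisy}{\nexxt z}_{L^2}$.

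For the second estimate, the key step is the decomposition
\[
    \frac{\eta_k}{\eta_{k+1}}D\thisx - T_k^* D W_k \thisx = \left(\frac{\eta_k}{\eta_{k+1}} - c_k\right) D\thisx + c_k D(\Id - W_k)\thisx,
\]
obtained by adding and subtracting $c_k D \thisx$. Applying the elementary inequality $|a+b|^2 \le 2|a|^2 + 2|b|^2$ pointwise and integrating gives
\[
    \adaptnorm{\tfrac{\eta_k}{\eta_{k+1}}D\thisx - T_k^* D W_k \thisx}^2
    \le 2\int_\Omega \left|\tfrac{\eta_k}{\eta_{k+1}} - c_k(\xi)\right|^2 |D\thisx(\xi)|^2 \d\xi
    + 2\int_\Omega |c_k(\xi)|^2 |D(\Id - W_k)\thisx(\xi)|^2 \d\xi.
\]
Bounding the first integrand by $\tilde{C}_k |D\thisx(\xi)|^2$ and the second by $\opt{C}_k |D(\Id-W_k)\thisx(\xi)|^2$, then estimating both $L^2$ norms by $\norm{D}$ times the appropriate norm of $\thisx$ (using $\norm{D(\Id-W_k)\thisx} \le \norm{D}\norm{\Id-W_k}\norm{\thisx}$), yields
\[
    \adaptnorm{\tfrac{\eta_k}{\eta_{k+1}}D\thisx - T_k^* D W_k \thisx}^2
    \le 2\bigl(\tilde{C}_k + \opt{C}_k \norm{\Id - W_k}^2\bigr) \norm{D}^2 \norm{\thisx}^2,
\]
and taking the supremum over $\norm{\thisx}=1$ completes the proof.

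No step here is genuinely difficult; the only subtle point is choosing the right splitting so that one term captures the mismatch between $c_k$ and the ratio $\eta_k/\eta_{k+1}$ and the other isolates the deviation of the primal predictor from the identity, which is why the factor of $2$ appears.
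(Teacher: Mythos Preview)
Your proposal is correct and follows essentially the same route as the paper: the paper uses the identical add-and-subtract decomposition $\tfrac{\eta_k}{\eta_{k+1}}D\thisx - c_k D W_k\thisx = (\tfrac{\eta_k}{\eta_{k+1}} - c_k)D\thisx + c_k D(\Id - W_k)\thisx$, applies $|a+b|^2 \le 2|a|^2 + 2|b|^2$ pointwise, and bounds by $\tilde C_k$ and $\opt C_k$ exactly as you do. Your write-up merely adds a little more detail (the explicit form of $T_k^*$ and why the supremum in $\norm{T_k}^2=\opt C_k$ is attained), which is fine.
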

\begin{proof}
It is straightforward to verify that $\norm{T_k}^2 = \opt{C}_k$. Now,
   \begin{equation*}
    \begin{aligned}
    \adaptnorm{\frac{\eta_k}{\eta_{k+1}}K_k - T_k^{\ast}K_{k+1}W_k}^2
    &
    =
    \sup_{\|\thisx\| = 1} \int_{\Omega}\left|\frac{\eta_k}{\eta_{k+1}}D\thisx(\xi) - c_k(\xi) D W_k \thisx (\xi)\right|^2 d\xi
    \\
    &
    \leq
    2\sup_{\|\thisx\| = 1}  \int_{\Omega} \left(\left|\frac{\eta_k}{\eta_{k+1}} - c_k(\xi)\right|^2|D\thisx(\xi)|^2 + |c_k(\xi)|^2 | D(\Id -  W_k) \thisx (\xi) |^2 \right) d\xi
    \\
    &
    \leq
    2\bigl( \tilde{C}_k + \bar{C}_k \|\Id-W_k\|^2\bigr)\|D\|^2. \qedhere
    \end{aligned}
    \end{equation*}
\end{proof}

\begin{remark}
    \label{rem:dualscaling}
    If $\eta_{k} = \eta_{k+1}$,  and $|c_k(\xi)| \to 1$ for all $\xi$ as $k \to \infty$ whenever $W_k \to \Id$, then $\norm{\frac{\eta_k}{\eta_{k+1}}K_k - T_k^{\ast}K_{k+1}W_k} \to 0$.
\end{remark}

In \cref{sec:numerics}, we test in application a dual scaling predictor tailored in such a way that its scaling factors invalidate the dual at points where there are significant changes in the primal.

\section{Numerical experiments}
\label{sec:numerics}

We conducted experiments with our method in two applications: the image stabilisation (compare with \cite{tuomov-predict}) and the dynamic Positron Emission Tomography (PET) reconstruction.

In our image stabilisation experiment, we sequentially process highly noisy randomly displaced sub-images of a bigger image. For simplicity we assume to have access to noisy measurements of the displacements.
Leveraging these displacements in the primal predictors, our objective is to achieve real-time denoising of the displaced sub-images with total variation regularisation. In dynamic PET imaging, our focus shifts to an unknown density (image) undergoing rotational motion during the measurement process. Our task involves reconstructing this density using data obtained from its partial Radon transform, again subject to Poisson noise. We also assume access to noisy measurements of the rotation angle. If measurements of the displacement or the rotation is not otherwise available, they can be incorporated into our model following the optical flow displacement estimation approach in \cite{tuomov-predict}, for PET by adapting patient body motion tracking such as that in \cite{iwao2022brain}.

\subsection{Predictors}

For image stabilisation, the primal prediction is simply $\nexxt\primalpredict \defeq \thisx \circ \thisv$, where $\thisv$ represents the noisy measurement of an unknown displacement $\this\optv$ of the subimage for frame $k$. For the dynamic PET experiment, the primal prediction is $\nexxt\primalpredict(\xi) \defeq R_{\theta_\xi}^k(\thisx(\xi))$, where $R_{\theta_\xi}^k$ models the random rotational motion.
Based on the theoretical framework outlined in \cref{sec:predictors}, we evaluate several choices for the dual predictors:

\begin{itemize}
    \item \textit{Rotation}: A total variation preserving predictor that employs rotation on the dual variable as described in \cref{lemma:predictor-rotation}.
    \item \textit{Greedy}: An inner product preserving predictor by components as described in \cref{lemma:predictor-adhoc}.
    \item \textit{Strict Greedy}: A strict total variation preserving predictor as described in \cref{rem:predictors:strictgreedy}.
    \item \textit{Dual Scaling}: A predictor enforces component-wise scaling as described in \cref{lemma:predictor-dualscaling} with
    \[
       c_k(\xi) \defeq 1 - \chi_k \nu_k( \nextx_\delta(\xi) )  \quad \text{where} \quad \nextx_\delta(\xi) \defeq \frac{|\nexxt\primalpredict(\xi) - \thisx(\xi)|}{\max\lbrace 10^{-12}, \max_\xi |\nexxt\primalpredict(\xi) - \thisx(\xi)| \rbrace},
    \]
    for a scalar $\chi_k$ and an “activation” function $\nu_k$ satisfying $\nu_k(0) = 0$ and $\nu_k(1)=1$. For experiments where the test image or phantom has mostly flat regions (e.g., Shepp-Logan phantom below), we use $\chi_k = 1.0$ and $\nu_k (\cdot) = (1 + e^{-1000(\cdot-0.05)})^{-1}$. Otherwise (e.g., for lighthouse test image and brain phantom), we fix $\chi_k = 0.75$ and $\nu_k (\cdot) = 1 - \abs{\cdot-1}^{1/5}$.

    The idea is that the dual predictor attempts to move $\norm{\this{\breve y}(\xi_k)}$ towards either $\alpha$ or $0$ when the primal predictor has changed the corresponding primal variable. However, when the changes in the primal variable become small, the activation function also causes the changes in the dual variable to become small, allowing it to stabilise, and the prediction errors to become small via $\abs{c_k(\xi)} \to 1$ as in \cref{rem:dualscaling}.
\end{itemize}
We compare these proposed predictors to
\begin{itemize}
    \item \textit{No Prediction}, i.e., both primal and dual predictors are identity maps;
    \item \textit{Primal Only} predictor with identity dual prediction;
    \item \textit{Zero Dual} predictor, i.e., $\this{\breve y}=0$; and the
    \item \textit{Proximal} (old) dual predictor from \cite{tuomov-predict}.
\end{itemize}
For the latter, we use  $\tilde{G}_k^{\ast} = G_k^{\ast} + \frac{\tilde{\rho_k}}{2}\|\cdot\|^2_{L^2(\Omega)}$ in the proximal prediction step, and a ``phantom'' $\rho = 100$, as discussed in \cite{tuomov-predict}, to allow for larger dual step lengths.
The Zero Dual predictor is motivated by the staircasing effect of total variation: it promotes flat regions, where the dual variable can be close to zero. As we will see, it performs remarkably well when there is movement in the images, but fails to stabilise when the movement is stopped.

\def\denoisegreedy{lighthouse200x300_pdps_known_greedy_25_10000_100}
\def\denoisestrictgreedy{lighthouse200x300_pdps_known_strictgreedy_25_10000_100}
\def\denoiseprimalonly{lighthouse200x300_pdps_known_primalonly_25_10000_100}
\def\denoiserotation{lighthouse200x300_pdps_known_rotation_25_10000_100}
\def\denoiseproximal{lighthouse200x300_pdps_known_proximal_25_10000_100}
\def\denoisenoprediction{lighthouse200x300_pdps_known_noprediction_25_10000_100}
\def\denoisedualscaling{lighthouse200x300_pdps_known_dualscaling_25_10000_100}
\def\denoisezerodual{lighthouse200x300_pdps_known_zerodual_25_10000_100}

\subsection{Image stabilisation}

\begin{figure}[t]%
    \centering%
    \begin{subfigure}{0.33\textwidth}%
        \includegraphics[width=\textwidth]{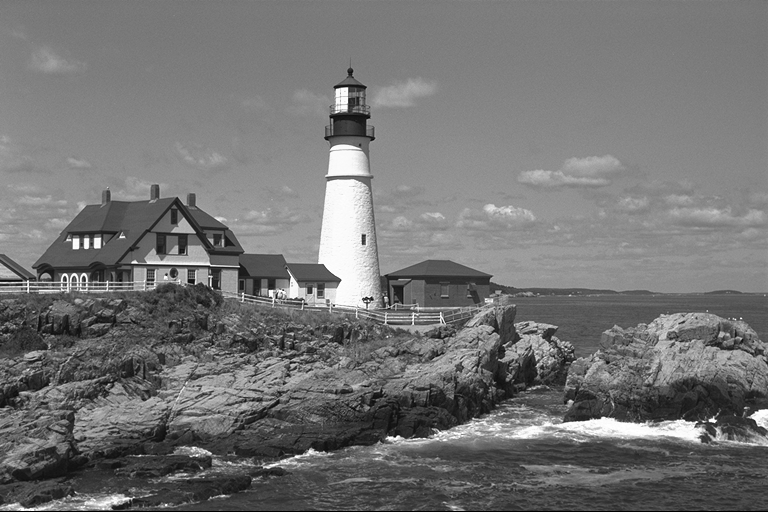}%
        \caption{Original}
        \label{fig:flow:lighthouse:tv:orig}
    \end{subfigure}\,%
    \begin{subfigure}{0.33\textwidth}%
        \includegraphics[width=\textwidth]{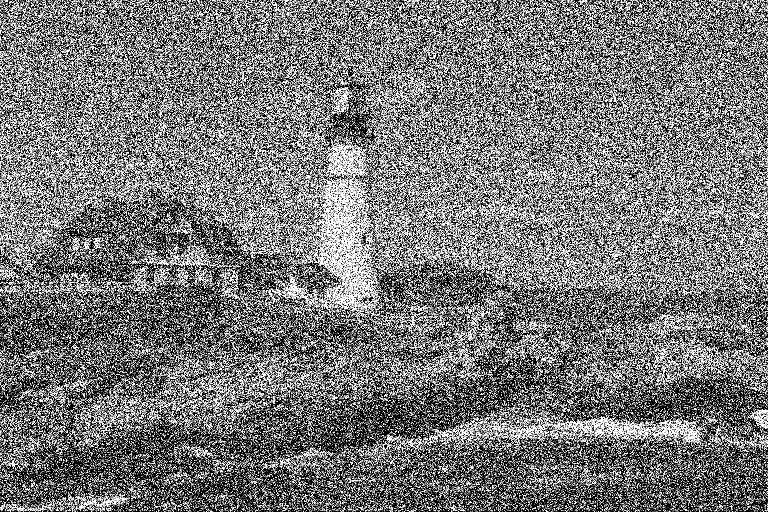}%
        \caption{Data: noise level $0.5$}
    \end{subfigure}\,%
    \begin{subfigure}{0.33\textwidth}%
        \includegraphics[width=\textwidth]{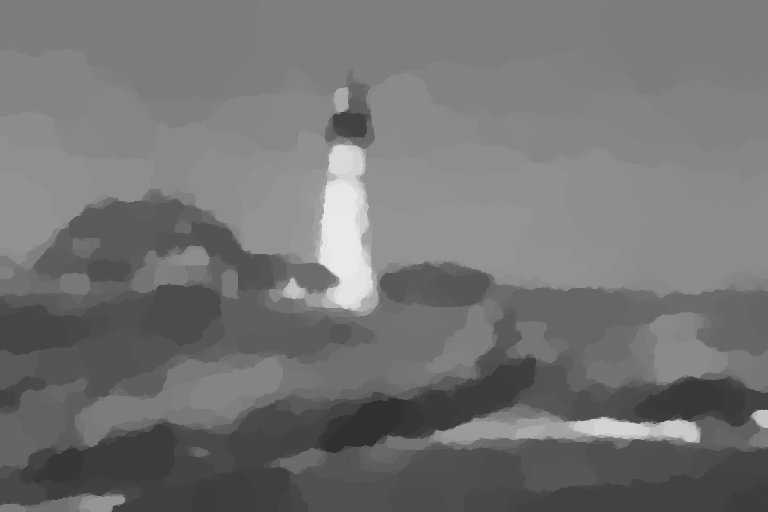}%
        \caption{Reconstruction: $\alpha=1.0$}
        \label{fig:flow:lighthouse:tv:known}
    \end{subfigure}%
    \caption{Test image, added noise, and stationary reconstruction for comparison.}
    \label{fig:flow:lighthouse}
\end{figure}

For image stabilisation, we assume to be given in each frame a noisy measurement $z_k \in X_k$ of a true image $\opt{z}_k \in X_k$ and a noisy measurement $\thisv \in V_k$ of a true displacement field $\this\optv \in V_k$. We require the measured displacement fields $\thisv$ to be bijective. The  finite-dimensional subspaces $X_k \subset L^2(\Omega)$, $Y_k \subset (\Omega; \R^2)$, and $V_k \subset L^2(\Omega; \Omega) \cap C^2(\Omega; \Omega)$ on a domain $\Omega \subset \R^2$ we equip with the $L^2$-norm. We set the objective functions $J_k$ in \eqref{eq:pd:problem} by taking
\[
   F_k(x) \defeq \frac{1}{2}\norm{x - z_k}^2, \quad E_k(x) \defeq 0,\quad  \text{and} \quad(G_k \circ K_k)(x) \defeq \alpha\norm{D_kx}_{2,1}
\]
where $D_k$ is a discretised differential operator, and $\alpha$ is the regularisation parameter for total variation.

\subsubsection*{Numerical setup and results}

\begin{table}
        \centering
        \begin{tabular}{lcccccc}
            \hline
            \multirow{2}{*}{Predictor} & \multicolumn{2}{c}{Average PSNR} & PSNR & \multicolumn{2}{c}{Average SSIM} & SSIM \\
             & iter 1 & iter 500 & 95\% CI & iter 1 & iter 500 & 95\% CI \\ \hline
            Dual Scaling & \textbf{22.6959} & \textbf{27.9238} & \textbf{27.0909--28.7567 }& \textbf{0.6697} & \textbf{0.8101} & \textbf{0.7973--0.8229} \\
            Greedy & 21.7029 & 26.5375 & 25.8964--27.1786 & 0.6509 & 0.7877 & 0.7740--0.8014 \\
            No Prediction & 19.9162 & 24.2983 & 23.3691--25.2275 & 0.6201 & 0.7629 & 0.7436--0.7822 \\
            Primal Only & 21.7029 & 26.5374 & 25.8963--27.1785 & 0.6509 & 0.7877 & 0.7740--0.8014 \\
            Proximal & 21.5815 & 26.3912 & 25.7342--27.0482 & 0.6537 & 0.7955 & 0.7808--0.8102 \\
            Rotation & 21.8185 & 26.6875 & 26.0523--27.3227 & 0.6588 & 0.7963 & 0.7834--0.8092 \\
            Strict Greedy & 21.6471 & 26.4771 & 25.8202--27.1340 & 0.6572 & 0.7989 & 0.7844--0.8134 \\
            Zero Dual & 21.9269 & 26.8247 & 26.2399--27.4095 & 0.5940 & 0.7012 & 0.6766--0.7258 \\
        \end{tabular}
        \caption{Average PSNR and SSIM for computational image stabilisation, from the indicated iteration until 10000 iterations. The confidence intervals (CI) are computed starting from the 500\textsuperscript{th} iteration.}
        \label{tab:denoising}
        \bigskip
        \begin{tabular}{lcccccc}
            \hline
            \multirow{2}{*}{Predictor} & \multicolumn{2}{c}{Average PSNR} & PSNR & \multicolumn{2}{c}{Average SSIM} & SSIM \\
             & iter 1 & iter 500 & 95\% CI & iter 1 & iter 500 & 95\% CI \\ \hline
            Dual Scaling & \textbf{14.7561} & \textbf{19.8815} & \textbf{19.2024--20.5606} & 0.5883 & \textbf{0.8309} & \textbf{0.8102--0.8516} \\
            Greedy & 14.7223 & 19.8249 & 19.1398--20.5100 & 0.5882 & 0.8285 & 0.8074--0.8496 \\
            No Prediction & 12.9669 & 18.0957 & 17.0723--19.1191 & 0.4391 & 0.7106 & 0.6606--0.7606 \\
            Primal Only & 14.7340 & 19.8448 & 19.1615--20.5281 & 0.5875 & 0.8286 & 0.8075--0.8497 \\
            Proximal & 14.7176 & 19.8137 & 19.1259--20.5015 & \textbf{0.5894} & 0.8278 & 0.8065--0.8491 \\
            Rotation & 14.7403 & 19.8539 & 19.1730--20.5348 & 0.5875 & 0.8291 & 0.8081--0.8501 \\
            Strict Greedy & 14.7175 & 19.8143 & 19.126--20.5026 & 0.5883 & 0.8262 & 0.8053--0.8471 \\
            Zero Dual & 14.5527 & 19.4534 & 18.9525--19.9543 & 0.5681 & 0.7888 & 0.7814--0.7962
        \end{tabular}
        \caption{Average PSNR and SSIM for dynamic PET reconstruction with Shepp-Logan phantom. from the indicated iteration until 4000 iterations. The confidence intervals (CI) are computed starting from the 500\textsuperscript{th} iteration.}
        \label{tab:PET}
        \bigskip
        \begin{tabular}{lcccccc}
            \hline
            \multirow{2}{*}{Predictor} & \multicolumn{2}{c}{Average PSNR} & PSNR & \multicolumn{2}{c}{Average SSIM} & SSIM \\
             & iter 1 & iter 500 & 95\% CI & iter 1 & iter 500 & 95\% CI \\ \hline
            Dual Scaling & \textbf{15.9222} & \textbf{21.1804} & \textbf{20.8011--21.5597} & \textbf{0.4720} & \textbf{0.6542} & \textbf{0.6353--0.6731} \\
            Greedy & 15.8040 & 20.9289 & 20.6396--21.2182 & 0.4677 & 0.6447 & 0.6292--0.6602 \\
            No Prediction & 14.2962 & 19.7518 & 19.1786--20.3250 & 0.4096 & 0.6021 & 0.5770--0.6272 \\
            Primal Only & 15.7985 & 20.9236 & 20.6335--21.2137 & 0.4672 & 0.6443 & 0.6287--0.6599 \\
            Proximal & 15.7758 & 20.8758 & 20.5797--21.1719 & 0.4667 & 0.6431 & 0.6274--0.6588 \\
            Rotation & 15.8037 & 20.9331 & 20.6444--21.2218 & 0.4673 & 0.6446 & 0.6291--0.6601 \\
            Strict Greedy & 15.7772 & 20.8803 & 20.5828--21.1778 & 0.4666 & 0.6432 & 0.6273--0.6591 \\
            Zero Dual & 15.8139 & 20.9511 & 20.6682--21.2340 & 0.4639 & 0.6372 & 0.6250--0.6494
        \end{tabular}
        \caption{Average PSNR and SSIM for dynamic PET reconstruction with brain phantom, from the indicated iteration until 4000 iterations. The confidence intervals (CI) are computed starting from the 500\textsuperscript{th} iteration.}
        \label{tab:PETb}
\end{table}

To evaluate our approach, we use as test image the lighthouse image (from the free Kodak image suite \cite{franzenkodak}) displayed in \cref{fig:flow:lighthouse} along with a noisy version and a single-frame total variation reconstruction for comparison. The original size is 768$\times$512 pixels. For our experiments, we pick a 300$\times$200 sub-image moving according to Brownian motion of standard deviation 2.
This motion is stopped on two subintervals (frames 2500--5000 and 8700--10000).
Thus the displacement fields $\this{\opt v}(\xi)=\xi + \this{d}$ with $\this{d} \in \R^2$ are constant in space. We added 50\% Gaussian noise (standard deviation 0.5 with original intensities in $[0, 1]$) to the sub-image. To construct the measured displacements available to the algorithm, we add 2.5\% Gaussian noise to the true displacements. It is worth mentioning that even on those intervals where the motion is stopped, the displacements available to the algorithm are still not necessarily zero due to introduced noise.

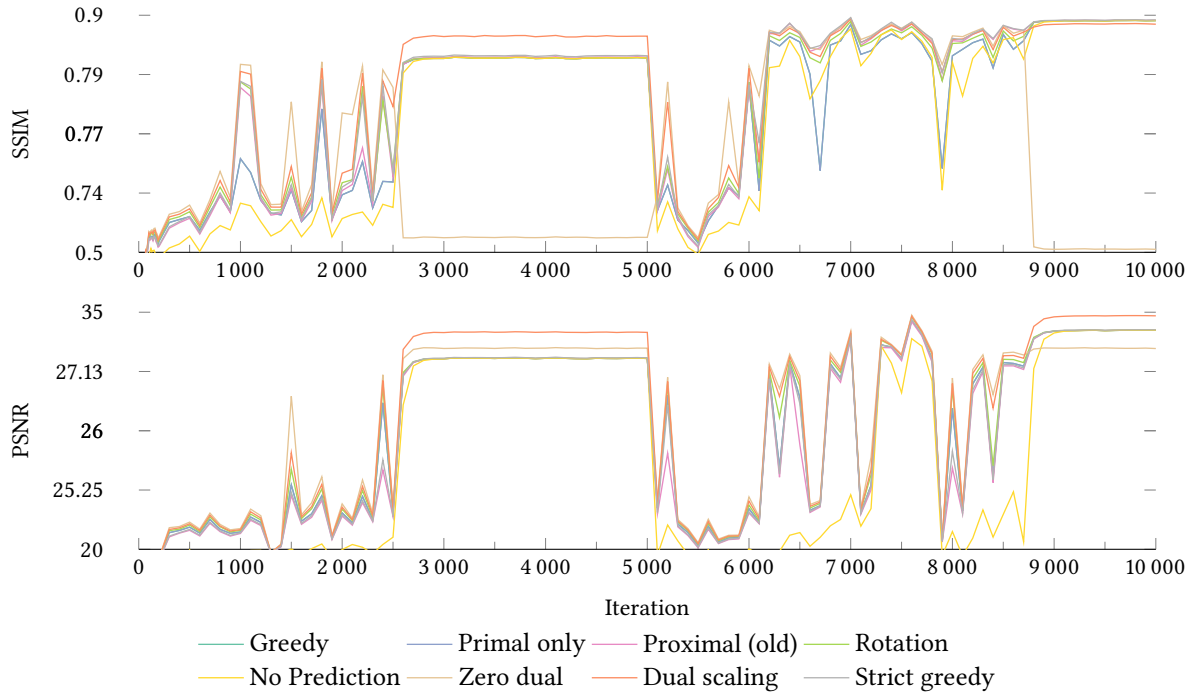
\begin{figure}[t!]
    \centering
    \tikzsetnextfilename{denoise_graphs}
    \tikzexternalenable
    \begin{tikzpicture}
        \def\yvalue{ssim}%
        \pgfplotsset{ylabel={SSIM},
                     every axis/.append style={yshift={-0.25\linewidth}}}%
        \def\ssimname{ssim}
\ifx\yvalue\ssimname
    \def\ymin{0.50}
    \def\yzoom{0.77}
    \def\ymax{0.9}
    \def\yprec{2}
\else
    \def\ymin{20}
    \def\yzoom{26}
    \def\ymax{35}
    \def\yprec{2}
\fi
\begin{axis}[%
    width={0.95\linewidth},
    height={0.3\linewidth},
    xmin=1,xmax=10000,
    scaled x ticks=false,
    x tick label style={/pgf/number format/fixed, /pgf/number format/set thousands separator={\,}},
    xminorticks=true,
    minor x tick num=1,
    axis x line*=bottom,
    legend style={legend pos=south east,inner sep=0pt,outer sep=0pt,legend cell align=left,align=left,draw=none,fill=none,font=\tiny, legend columns=1},
    zoomed = {\ymin}{\yzoom}{\ymax},
    tick label style={font=\footnotesize},
    label style={font=\footnotesize},
    legend style={font=\small},
    legend columns=4,
    legend to name={leg:denoise:\yvalue},
    ]

    \addplot [color=Set2-A, line width=0.5pt]
        table[x=iter,y=\yvalue]{img/\denoisegreedy.txt};
        \addlegendentry{Greedy}

    \addplot [color=Set2-C, line width=0.5pt]
        table[x=iter,y=\yvalue]{img/\denoiseprimalonly.txt};
        \addlegendentry{Primal only}

    \addplot [color=Set2-D, line width=0.5pt]
        table[x=iter,y=\yvalue]{img/\denoiseproximal.txt};
        \addlegendentry{Proximal (old)}

    \addplot [color=Set2-E, line width=0.5pt]
        table[x=iter,y=\yvalue]{img/\denoiserotation.txt};
        \addlegendentry{Rotation}

    \addplot [color=Set2-F, line width=0.5pt]
        table[x=iter,y=\yvalue]{img/\denoisenoprediction.txt};
        \addlegendentry{No Prediction}

    \addplot [color=Set2-G, line width=0.5pt]
        table[x=iter,y=\yvalue]{img/\denoisezerodual.txt};
        \addlegendentry{Zero dual}

    \addplot [color=Set2-B, line width=0.5pt]
        table[x=iter,y=\yvalue]{img/\denoisedualscaling.txt};
        \addlegendentry{Dual scaling}
        
    \addplot [color=Set2-H, line width=0.5pt]
        table[x=iter,y=\yvalue]{img/\denoisestrictgreedy.txt};
        \addlegendentry{Strict greedy}

\end{axis}%
        \def\yvalue{psnr}%
        \pgfplotsset{ylabel={PSNR},
                     every axis/.append style={yshift={-0.25\linewidth}}}%
        \pgfplotsset{xlabel=Iteration}
        \def\ssimname{ssim}
\ifx\yvalue\ssimname
    \def\ymin{0.50}
    \def\yzoom{0.77}
    \def\ymax{0.9}
    \def\yprec{2}
\else
    \def\ymin{20}
    \def\yzoom{26}
    \def\ymax{35}
    \def\yprec{2}
\fi
\begin{axis}[%
    width={0.95\linewidth},
    height={0.3\linewidth},
    xmin=1,xmax=10000,
    scaled x ticks=false,
    x tick label style={/pgf/number format/fixed, /pgf/number format/set thousands separator={\,}},
    xminorticks=true,
    minor x tick num=1,
    axis x line*=bottom,
    legend style={legend pos=south east,inner sep=0pt,outer sep=0pt,legend cell align=left,align=left,draw=none,fill=none,font=\tiny, legend columns=1},
    zoomed = {\ymin}{\yzoom}{\ymax},
    tick label style={font=\footnotesize},
    label style={font=\footnotesize},
    legend style={font=\small},
    legend columns=4,
    legend to name={leg:denoise:\yvalue},
    ]

    \addplot [color=Set2-A, line width=0.5pt]
        table[x=iter,y=\yvalue]{img/\denoisegreedy.txt};
        \addlegendentry{Greedy}

    \addplot [color=Set2-C, line width=0.5pt]
        table[x=iter,y=\yvalue]{img/\denoiseprimalonly.txt};
        \addlegendentry{Primal only}

    \addplot [color=Set2-D, line width=0.5pt]
        table[x=iter,y=\yvalue]{img/\denoiseproximal.txt};
        \addlegendentry{Proximal (old)}

    \addplot [color=Set2-E, line width=0.5pt]
        table[x=iter,y=\yvalue]{img/\denoiserotation.txt};
        \addlegendentry{Rotation}

    \addplot [color=Set2-F, line width=0.5pt]
        table[x=iter,y=\yvalue]{img/\denoisenoprediction.txt};
        \addlegendentry{No Prediction}

    \addplot [color=Set2-G, line width=0.5pt]
        table[x=iter,y=\yvalue]{img/\denoisezerodual.txt};
        \addlegendentry{Zero dual}

    \addplot [color=Set2-B, line width=0.5pt]
        table[x=iter,y=\yvalue]{img/\denoisedualscaling.txt};
        \addlegendentry{Dual scaling}
        
    \addplot [color=Set2-H, line width=0.5pt]
        table[x=iter,y=\yvalue]{img/\denoisestrictgreedy.txt};
        \addlegendentry{Strict greedy}

\end{axis}%
    \end{tikzpicture}
    \tikzexternaldisable%
    \pgfplotslegendfromname{leg:denoise:ssim}
    \caption{Iteration-wise SSIM and PSNR for the image stabilisation experiment.}%
    \label{fig:denoising:ssimpsnr}%
\end{figure}

\begin{figure}[tp]
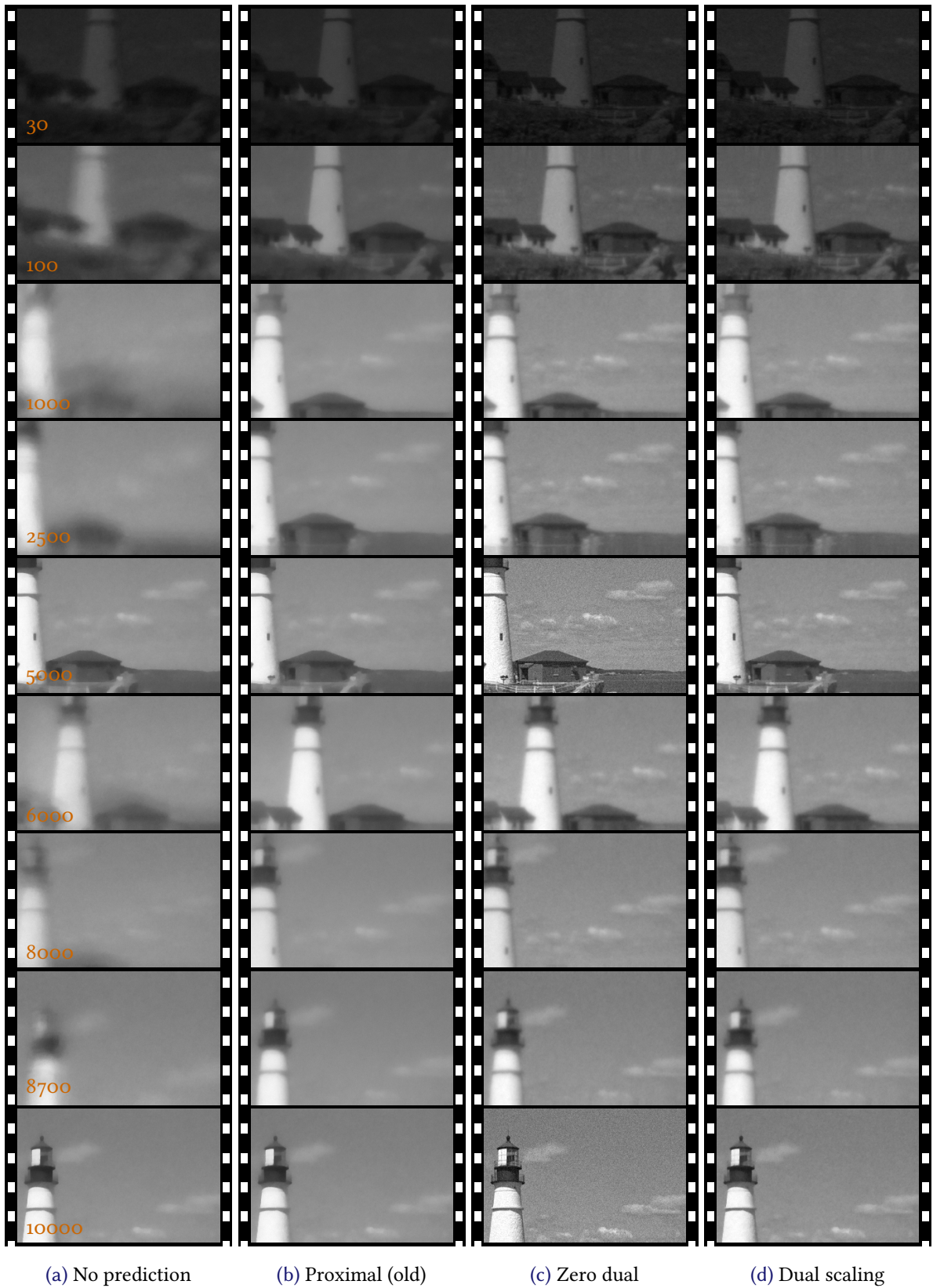
%
    \centering
    \dostripd{\denoisenoprediction}{\denoiseproximal}{\denoisezerodual}{\denoisedualscaling}
    \caption{Image stabilisation results for several predictors when $\alpha = 0.25$.
             The numbers on the left indicate the iteration/frame.}
    \label{fig:denoising:reco:l}
\end{figure}

We take the regularisation parameter $\alpha=0.25$. We set the following parameters for \cref{alg:pd:alg}:
\begin{itemize}[label={--},nosep]
    \item Step length parameter $\tau=0.01$, as well as $\Lambda=\Theta=1$.
    \item Primal strong convexity factor $\gamma=1$ and, generally, dual factor $\rho=0$.
    \item Maximal $\sigma$ and estimate $\norm{K_k} \le \sqrt{8}$ for forward-differences discretisation of $K_k=D$ with cell width $h=1$.
\end{itemize}

We always take zero as the initial iterate (primal and dual).

We implemented our algorithms in Julia, and performed our experiments on a mid-2022 MacBook Air with 16GB RAM and eight CPU cores. The implementation is available on Zenodo \cite{predict-code}.

\Cref{fig:denoising:reco:l} shows the reconstructed lighthouse subimages. The initial frames are darker and noisier due to the algorithm requiring a certain number of steps to stabilise. The reconstructions without any predictor (a) exhibit the poorest quality. The lighthouse and background features appear blurry and lack definition. The reconstruction generated by the Proximal (old) predictor of \cite{tuomov-proxtest} (b) appears visually distinct from the others. While it retains some level of detail, the overall image appears smoother and flatter. The reconstructions obtained using the Zero Dual (c) and the Dual Scaling predictors (d) demonstrate improved image quality in intervals where the object is not stable. These methods recover sharper edges and finer details, making the lighthouse structure and the clouds more prominent. Howeve, the Zero Dual still gives noisy reconstructions at intervals where the movement is stopped, compared to that of the Dual Scaling predictors.

To quantify these observations, \cref{fig:denoising:ssimpsnr} shows the plots of Structural Similarity Index (SSIM) and Peak Signal-to-Noise Ratio (PSNR) for each reconstructed frame. The Proximal (old) dual predictor exhibits lower SSIM and PSNR values compared to those of our proposed  Dual Scaling, Greedy, Strict Greedy, and Rotation dual predictors. Moreover, the reconstructions produced by the Dual Scaling predictor attained the highest SSIM and PSNR scores, even during intervals where motion is stopped, surpassing those of the No Prediction method.  During stabilisation, even though the true displacement fields are zeroed out, the displacement fields available to the algorithms are still subject to small noise, and the Dual Scaling predictor adapted well even with these small errors.
\Cref{tab:denoising} additionally shows average, SSIM and PSNR, as well as 95\% confidence intervals (CI) for them.
The Dual Scaling predictor consistently delivers high image quality scores.
The Zero Dual, however, delivers poor SSIM scores, especially on the intervals where the movement is stopped.

In summary, both visual inspection and quantitative metrics demonstrate the effectiveness of our proposed simplified \cref{alg:pd:alg} and several proposed predictors, compared to the original algorithm from \cite{tuomov-predict}, and to employing no prediction at all. This suggests that carefully designed dual predictors can play a crucial role in improving visual quality of the reconstructions.

\subsection{Dynamic positron emission tomography}

\begin{figure}[t!]
    \centering
    \begin{subfigure}{0.3\textwidth}
        \centering
        \includegraphics[width=\textwidth]{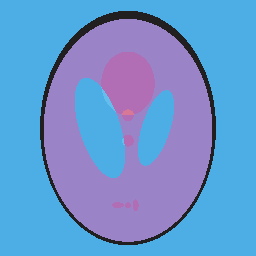}
        \caption{Test image}
    \end{subfigure}\hfill
    \raisebox{15\height}{$\to$}\hfill
    \begin{subfigure}{0.12\textwidth}
        \centering
        \includegraphics[width=\textwidth]{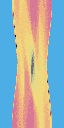}
        \caption{\centering True sinogram}
    \end{subfigure}\hfill
    \raisebox{15\height}{$\to$}\hfill
    \begin{subfigure}{0.12\textwidth}
        \centering
        \includegraphics[width=\textwidth]{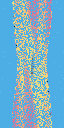}
       \caption{\centering Noisy sinogram}
    \end{subfigure}\hfill
    \raisebox{15\height}{$\to$}\hfill
    \begin{subfigure}{0.3\textwidth}
        \centering
        \includegraphics[width=\textwidth]{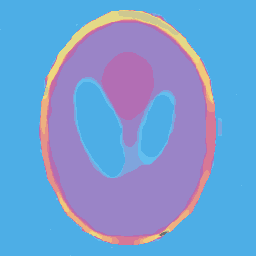}
       \caption{Reconstruction $\alpha = 1$}
    \end{subfigure}
    \caption{Shepp-Logan phantom, true sinogram, noisy subsampled sinogram, and static reconstruction. The colours represent values in $[0,1]$ as \protect\includegraphics[height=1.5ex]{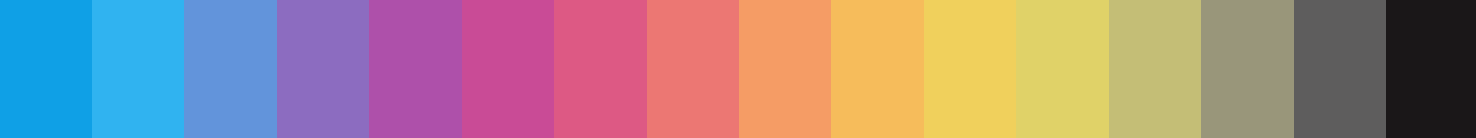}.}
   \label{fig:flow:shepplogan}
\end{figure}

\begin{figure}[t!]
    \centering
    \begin{subfigure}{0.3\textwidth}
        \centering
        \includegraphics[width=\textwidth]{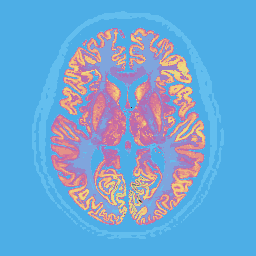}
        \caption{Test image}
    \end{subfigure}\hfill
    \raisebox{15\height}{$\to$}\hfill
    \begin{subfigure}{0.12\textwidth}
        \centering
        \includegraphics[width=\textwidth]{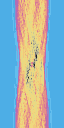}
        \caption{\centering True sinogram}
    \end{subfigure}\hfill
    \raisebox{15\height}{$\to$}\hfill
    \begin{subfigure}{0.12\textwidth}
        \centering
        \includegraphics[width=\textwidth]{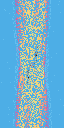}
       \caption{\centering Noisy sinogram}
    \end{subfigure}\hfill
    \raisebox{15\height}{$\to$}\hfill
    \begin{subfigure}{0.3\textwidth}
        \centering
        \includegraphics[width=\textwidth]{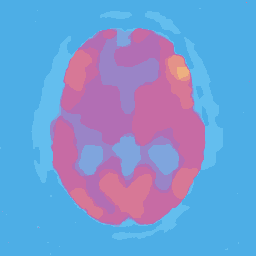}
       \caption{Reconstruction $\alpha = 1$}
    \end{subfigure}
    \caption{Brain phantom \cite{belzunce2020ultra}, true sinogram, noisy subsampled sinogram, and static reconstruction. The colours represent values in $[0,1]$ as \protect\includegraphics[height=1.5ex]{img/colorbar}.}
   \label{fig:flow:brainphantom}
\end{figure}

In dynamic PET, we assume to be given in each frame a noisy PET dataset $z_k$ in the data space obtained from a test image $\this\optx \in X_k$. We let assume to be given a noisy $\theta$ to form $R_{\theta}^k: X_k \to X_{k+1}$ that models the noisy rotational motion (by an angle $\theta$ about a perturbed center) of pixels between frames. The finite-dimensional subspaces $X_k \subset L^2(\Omega)$, $Y_k \subset (\Omega; \R^2)$, and $V \subset L^2(\Omega; \Omega) \cap C^2(\Omega; \Omega)$ on a domain $\Omega \subset \R^2$ equipped with the $L^2$-norm. We set the objective functions $J_k$ in \eqref{eq:pd:problem} by taking
\[
    F_k(x) \defeq \delta_{\R^+}, \quad E_k(x) \defeq \sum_{i=1}^n \left( [A_k\thisx]_i - [z_k]_i \log([A_k\thisx+c_k]_i)\right)  , \quad \text{and} \quad(G_k \circ K_k)(x) \defeq \alpha\norm{D_kx}_{2,1},
\]
where $A_k$ is the forward model based on a partial Radon transform and $c_k$ is a known vector with non-negative entries that models the expected number of background events.

\subsubsection*{Numerical setup and results}

\def\petSgreedy{shepplogan256x256_pdps_known_greedy_25_10000_9000}
\def\petSstrictgreedy{shepplogan256x256_pdps_known_strictgreedy_25_10000_9000}
\def\petSprimalonly{shepplogan256x256_pdps_known_primalonly_25_10000_9000}
\def\petSrotation{shepplogan256x256_pdps_known_rotation_25_10000_9000}
\def\petSproximal{shepplogan256x256_pdps_known_proximal_25_10000_9000}
\def\petSnoprediction{shepplogan256x256_pdps_known_noprediction_25_10000_9000}
\def\petSdualscaling{shepplogan256x256_pdps_known_dualscaling_25_10000_9000}
\def\petSzerodual{shepplogan256x256_pdps_known_zerodual_25_10000_9000}

\def\petBgreedy{brainphantom256x256_pdps_known_greedy_25_10000_9000}
\def\petBstrictgreedy{brainphantom256x256_pdps_known_strictgreedy_25_10000_9000}
\def\petBprimalonly{brainphantom256x256_pdps_known_primalonly_25_10000_9000}
\def\petBrotation{brainphantom256x256_pdps_known_rotation_25_10000_9000}
\def\petBproximal{brainphantom256x256_pdps_known_proximal_25_10000_9000}
\def\petBnoprediction{brainphantom256x256_pdps_known_noprediction_25_10000_9000}
\def\petBdualscaling{brainphantom256x256_pdps_known_dualscaling_25_10000_9000}
\def\petBzerodual{brainphantom256x256_pdps_known_zerodual_25_10000_9000}

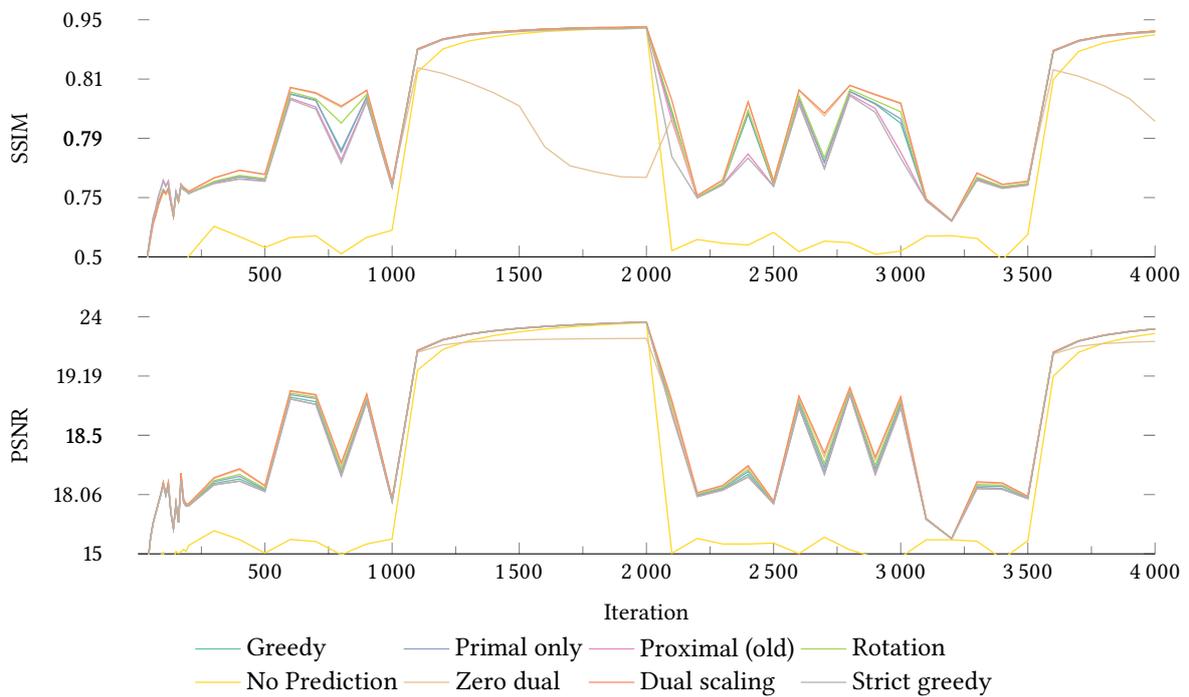
\begin{figure}[t!]
    \centering
    \tikzsetnextfilename{pet_graphs}
    \tikzexternalenable
    \begin{tikzpicture}
        \def\yvalue{ssim}%
        \pgfplotsset{ylabel={SSIM},
                     every axis/.append style={yshift={-0.25\linewidth}}}%
        \def\ssimname{ssim}
\ifx\yvalue\ssimname
    \def\ymin{0.5}
    \def\yzoom{0.79}
    \def\ymax{0.95}
    \def\yprec{2}
\else
    \def\ymin{15}
    \def\yzoom{18.5}
    \def\ymax{24}
    \def\yprec{2}
\fi
\begin{axis}[%
    width={0.95\linewidth},
    height={0.3\linewidth},
    xmin=1,xmax=4000,
    scaled x ticks=false,
    x tick label style={/pgf/number format/fixed, /pgf/number format/set thousands separator={\,}},
    xminorticks=true,
    minor x tick num=1,
    axis x line*=bottom,
    legend style={legend pos=north west,inner sep=0pt,outer sep=0pt,legend cell align=left,align=left,draw=none,fill=none,font=\tiny,legend columns=1}, 
    zoomed = {\ymin}{\yzoom}{\ymax},
    tick label style={font=\footnotesize},
    label style={font=\footnotesize},
    legend style={font=\small},
    legend columns=4,
    legend to name = {leg:pet:\yvalue},
    ]
    \addplot [color=Set2-A, line width=0.5pt]
        table[x=iter,y=\yvalue]{img/\petSgreedy.txt};
        \addlegendentry{Greedy}

    \addplot [color=Set2-C, line width=0.5pt]
        table[x=iter,y=\yvalue]{img/\petSprimalonly.txt};
        \addlegendentry{Primal only}

    \addplot [color=Set2-D, line width=0.5pt]
        table[x=iter,y=\yvalue]{img/\petSproximal.txt};
        \addlegendentry{Proximal (old)}

    \addplot [color=Set2-E, line width=0.5pt]
        table[x=iter,y=\yvalue]{img/\petSrotation.txt};
        \addlegendentry{Rotation}

    \addplot [color=Set2-F, line width=0.5pt]
        table[x=iter,y=\yvalue]{img/\petSnoprediction.txt};
        \addlegendentry{No Prediction}

    \addplot [color=Set2-G, line width=0.5pt]
        table[x=iter,y=\yvalue]{img/\petSzerodual.txt};
        \addlegendentry{Zero dual}

    \addplot [color=Set2-B, line width=0.5pt]
        table[x=iter,y=\yvalue]{img/\petSdualscaling.txt};
        \addlegendentry{Dual scaling}

    \addplot [color=Set2-H, line width=0.5pt]
        table[x=iter,y=\yvalue]{img/\petSstrictgreedy.txt};
        \addlegendentry{Strict greedy}

\end{axis}%
        \def\yvalue{psnr}%
        \pgfplotsset{ylabel={PSNR},
                     every axis/.append style={yshift={-0.25\linewidth}}}%
        \pgfplotsset{xlabel=Iteration}
        \def\ssimname{ssim}
\ifx\yvalue\ssimname
    \def\ymin{0.5}
    \def\yzoom{0.79}
    \def\ymax{0.95}
    \def\yprec{2}
\else
    \def\ymin{15}
    \def\yzoom{18.5}
    \def\ymax{24}
    \def\yprec{2}
\fi
\begin{axis}[%
    width={0.95\linewidth},
    height={0.3\linewidth},
    xmin=1,xmax=4000,
    scaled x ticks=false,
    x tick label style={/pgf/number format/fixed, /pgf/number format/set thousands separator={\,}},
    xminorticks=true,
    minor x tick num=1,
    axis x line*=bottom,
    legend style={legend pos=north west,inner sep=0pt,outer sep=0pt,legend cell align=left,align=left,draw=none,fill=none,font=\tiny,legend columns=1}, 
    zoomed = {\ymin}{\yzoom}{\ymax},
    tick label style={font=\footnotesize},
    label style={font=\footnotesize},
    legend style={font=\small},
    legend columns=4,
    legend to name = {leg:pet:\yvalue},
    ]
    \addplot [color=Set2-A, line width=0.5pt]
        table[x=iter,y=\yvalue]{img/\petSgreedy.txt};
        \addlegendentry{Greedy}

    \addplot [color=Set2-C, line width=0.5pt]
        table[x=iter,y=\yvalue]{img/\petSprimalonly.txt};
        \addlegendentry{Primal only}

    \addplot [color=Set2-D, line width=0.5pt]
        table[x=iter,y=\yvalue]{img/\petSproximal.txt};
        \addlegendentry{Proximal (old)}

    \addplot [color=Set2-E, line width=0.5pt]
        table[x=iter,y=\yvalue]{img/\petSrotation.txt};
        \addlegendentry{Rotation}

    \addplot [color=Set2-F, line width=0.5pt]
        table[x=iter,y=\yvalue]{img/\petSnoprediction.txt};
        \addlegendentry{No Prediction}

    \addplot [color=Set2-G, line width=0.5pt]
        table[x=iter,y=\yvalue]{img/\petSzerodual.txt};
        \addlegendentry{Zero dual}

    \addplot [color=Set2-B, line width=0.5pt]
        table[x=iter,y=\yvalue]{img/\petSdualscaling.txt};
        \addlegendentry{Dual scaling}

    \addplot [color=Set2-H, line width=0.5pt]
        table[x=iter,y=\yvalue]{img/\petSstrictgreedy.txt};
        \addlegendentry{Strict greedy}

\end{axis}%
    \end{tikzpicture}
    \tikzexternaldisable%
    \pgfplotslegendfromname{leg:pet:ssim}
    \caption{Iteration-wise SSIM and PSNR for PET with Shepp-Logan phantom.}%
    \label{fig:pet:ssimpsnr}%
\end{figure}

\begin{figure}[t!]
    \centering
    \tikzsetnextfilename{petb_graphs}
    \tikzexternalenable
    \begin{tikzpicture}
        \def\yvalue{ssim}%
        \pgfplotsset{ylabel={SSIM},
                     every axis/.append style={yshift={-0.25\linewidth}}}%
        \def\ssimname{ssim}
\ifx\yvalue\ssimname
    \def\ymin{0.5}
    \def\yzoom{0.6}
    \def\ymax{0.8}
    \def\yprec{2}
\else
    \def\ymin{18}
    \def\yzoom{20.5}
    \def\ymax{23}
    \def\yprec{2}
\fi
\begin{axis}[%
    width={0.95\linewidth},
    height={0.3\linewidth},
    xmin=1,xmax=4000,
    scaled x ticks=false,
    x tick label style={/pgf/number format/fixed, /pgf/number format/set thousands separator={\,}},
    xminorticks=true,
    minor x tick num=1,
    axis x line*=bottom,
    legend style={legend pos=north west,inner sep=0pt,outer sep=0pt,legend cell align=left,align=left,draw=none,fill=none,font=\tiny,legend columns=1}, 
    zoomed = {\ymin}{\yzoom}{\ymax},
    tick label style={font=\footnotesize},
    label style={font=\footnotesize},
    legend style={font=\small},
    legend columns=4,
    legend to name = {leg:petb:\yvalue},
    ]
    \addplot [color=Set2-A, line width=0.5pt]
        table[x=iter,y=\yvalue]{img/\petBgreedy.txt};
        \addlegendentry{Greedy}

    \addplot [color=Set2-C, line width=0.5pt]
        table[x=iter,y=\yvalue]{img/\petBprimalonly.txt};
        \addlegendentry{Primal only}

    \addplot [color=Set2-D, line width=0.5pt]
        table[x=iter,y=\yvalue]{img/\petBproximal.txt};
        \addlegendentry{Proximal (old)}

    \addplot [color=Set2-E, line width=0.5pt]
        table[x=iter,y=\yvalue]{img/\petBrotation.txt};
        \addlegendentry{Rotation}

    \addplot [color=Set2-F, line width=0.5pt]
        table[x=iter,y=\yvalue]{img/\petBnoprediction.txt};
        \addlegendentry{No Prediction}

    \addplot [color=Set2-G, line width=0.5pt]
        table[x=iter,y=\yvalue]{img/\petBzerodual.txt};
        \addlegendentry{Zero dual}

    \addplot [color=Set2-B, line width=0.5pt]
        table[x=iter,y=\yvalue]{img/\petBdualscaling.txt};
        \addlegendentry{Dual scaling}
        
    \addplot [color=Set2-H, line width=0.5pt]
        table[x=iter,y=\yvalue]{img/\petBstrictgreedy.txt};
        \addlegendentry{Strict greedy}

\end{axis}%
        \def\yvalue{psnr}%
        \pgfplotsset{ylabel={PSNR},
                     every axis/.append style={yshift={-0.25\linewidth}}}%
        \pgfplotsset{xlabel=Iteration}
        \def\ssimname{ssim}
\ifx\yvalue\ssimname
    \def\ymin{0.5}
    \def\yzoom{0.6}
    \def\ymax{0.8}
    \def\yprec{2}
\else
    \def\ymin{18}
    \def\yzoom{20.5}
    \def\ymax{23}
    \def\yprec{2}
\fi
\begin{axis}[%
    width={0.95\linewidth},
    height={0.3\linewidth},
    xmin=1,xmax=4000,
    scaled x ticks=false,
    x tick label style={/pgf/number format/fixed, /pgf/number format/set thousands separator={\,}},
    xminorticks=true,
    minor x tick num=1,
    axis x line*=bottom,
    legend style={legend pos=north west,inner sep=0pt,outer sep=0pt,legend cell align=left,align=left,draw=none,fill=none,font=\tiny,legend columns=1}, 
    zoomed = {\ymin}{\yzoom}{\ymax},
    tick label style={font=\footnotesize},
    label style={font=\footnotesize},
    legend style={font=\small},
    legend columns=4,
    legend to name = {leg:petb:\yvalue},
    ]
    \addplot [color=Set2-A, line width=0.5pt]
        table[x=iter,y=\yvalue]{img/\petBgreedy.txt};
        \addlegendentry{Greedy}

    \addplot [color=Set2-C, line width=0.5pt]
        table[x=iter,y=\yvalue]{img/\petBprimalonly.txt};
        \addlegendentry{Primal only}

    \addplot [color=Set2-D, line width=0.5pt]
        table[x=iter,y=\yvalue]{img/\petBproximal.txt};
        \addlegendentry{Proximal (old)}

    \addplot [color=Set2-E, line width=0.5pt]
        table[x=iter,y=\yvalue]{img/\petBrotation.txt};
        \addlegendentry{Rotation}

    \addplot [color=Set2-F, line width=0.5pt]
        table[x=iter,y=\yvalue]{img/\petBnoprediction.txt};
        \addlegendentry{No Prediction}

    \addplot [color=Set2-G, line width=0.5pt]
        table[x=iter,y=\yvalue]{img/\petBzerodual.txt};
        \addlegendentry{Zero dual}

    \addplot [color=Set2-B, line width=0.5pt]
        table[x=iter,y=\yvalue]{img/\petBdualscaling.txt};
        \addlegendentry{Dual scaling}
        
    \addplot [color=Set2-H, line width=0.5pt]
        table[x=iter,y=\yvalue]{img/\petBstrictgreedy.txt};
        \addlegendentry{Strict greedy}

\end{axis}%
    \end{tikzpicture}
    \tikzexternaldisable%
    \pgfplotslegendfromname{leg:petb:ssim}
    \caption{Iteration-wise SSIM and PSNR for PET with brain phantom.}%
    \label{fig:petb:ssimpsnr}%
\end{figure}
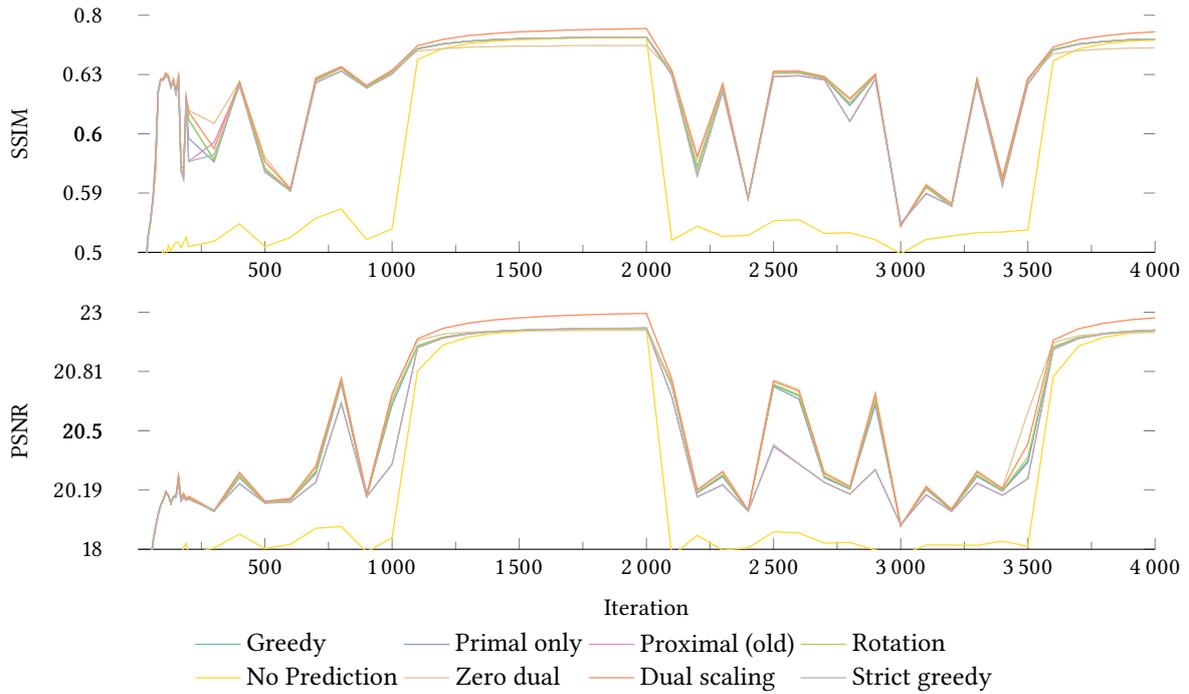

In our experiments, we use the Shepp-Logan phantom and the high-resolution brain phantom of \cite{belzunce2020ultra} to generate true and noisy PET datasets, examples of which are displayed in \cref{fig:flow:shepplogan,fig:flow:shepplogan}. We set both phantoms to have a resolution of 256$\times$256 pixels, while the PET datasets are organised into a sinogram of resolution 128$\times$64. To simulate the random nature of positron emissions, hence PET measurements, we further randomly subsample the sinogram down to 50\%.
To simulate motion, we rotate the phantom around a randomly chosen axis that shifts from the phantom's centre by a standard deviation of 1. The rotation angles themselves are chosen from a separate Gaussian distribution with a standard deviation of 0.15 radians.
We introduce Poisson noise with mean parameter of 0.5 to the PET dataset. To construct the simulated displacement measurements, that are available to the algorithms, we introduced Gaussian noise with a standard deviation of 0.035 radians to the rotation angles and Gaussian noise with a standard deviation of 0.25 to the centre of rotation. The movement is stopped on two subintervals (frames 1000--2000 and 3500--4000). We note again that even in these intervals, the displacements made available to the algorithm are still not necessarily zero due to introduced noise.

Taking the regularisation parameter $\alpha=0.25$, for \cref{alg:pd:alg} we take:
\begin{itemize}[label={--},nosep]
    \item Step length parameter $\tau=0.003$, as well as $\Lambda=\Theta=\kappa=1$.
    \item Primal strong convexity factor $\gamma=1$ and generally the dual factor $\rho=0$.
    \item Maximal $\sigma$ with the estimate $\norm{K_k} \le \sqrt{8}$ when $K_k=D$ is the forward differences operator with cell width $h=1$ \cite{chambolle2004algorithm}.
    \item We also fix $L = 300$, experimentally determined to be an upper bound over all iterations $k$ for
    \[
        L_{k+1} = \max \left\{L_k, 0.9 \frac{\norm{\grad E_{k}(\thisx) - \grad E_{k}( \this\primalpredict)}}{\norm{\thisx- \this\primalpredict}}\right\}.
    \]
\end{itemize}

\Cref{fig:pet:reco:l} shows the PET reconstructions of the Shepp-Logan phantom. The initial frames are omitted as they exhibit noisy appearance due to the algorithm requiring a certain number of steps to stabilise. The performance without any predictor (a), is clearly the poorest, producing blurry artefacts around the moving parts. The differences between the other reconstructions are less noticeable. However, a closer inspection reveals that the oval shapes appear more distinct in the reconstructions obtained using the Rotation (c) and Dual Scaling (d) predictors. They appear to be more resolved and separated compared to the reconstructions that used the Proximal (old) predictor (b). Additionally, the intensity levels within the two elliptical shapes more closely resemble those observed in the static reconstruction in \cref{fig:flow:shepplogan}.

\Cref{fig:petb:reco:l} displays the PET reconstructions of the brain phantom. The initial frames are omitted for similar reasons. Without any predictor, the results are again worst for intervals where the phantom is rotating. Although the differences between the other reconstructions are less pronounced, a closer look reveals the subtle differences in intensities especially in the yellow region. When there is no motion, the reconstruction is best with the Dual Scaling predictor, and noisiest with the Zero Dual.

\Cref{fig:pet:ssimpsnr,fig:petb:ssimpsnr} show the evolution of the SSIM and PSNR image quality metrics for the Shepp-Logan and brain phantoms, respectively. In both cases, the Proximal (old) predictor achieves lower SSIM and PSNR values compared to our proposed Greedy, Rotation, and Dual Scaling predictors. The image quality scores of brain phantom reconstructions with Strict Greedy predictor are better than those of the Proximal (old) predictor. Again, the reconstructions produced by the Dual Scaling predictor attained the highest SSIM and PSNR scores on intervals where rotational motion is stopped. Despite the true angles of rotation being zeroed out, the Dual Scaling predictor effectively adjusts for small errors in the available angles of rotation.
\Cref{tab:PET,tab:PETb} additionally show average, SSIM and PSNR, as well as 95\% confidence intervals (CI) for them.
The Dual Scaling predictor has consistently the best performance according to these metrics. The Zero Dual, however, fares poorly on the SSIM.

\subsection*{Conclusions}

The theory and the overall algorithm that we presented are simpler than the earlier approach of \cite{tuomov-predict}.
Moreover, the new predictors---justified by the theory---provide better numerical results. So far, we have, however, only treated relatively simple linear inverse problems with simple temporal characteristics.
The next step will be: nonlinear inverse problems with realtime PDE solution based on the techniques of \cite{jensen2022nonsmooth}.

\begin{figure}[tp]%
    \centering
    \dostrip{\petSnoprediction}{\petSproximal}{\petSzerodual}{\petSdualscaling}
    \caption{%
        Shepp–Logan reconstructions for several predictors.
        The colours represent values in $[0, 1]$ as \protect\includegraphics[height=1.5ex]{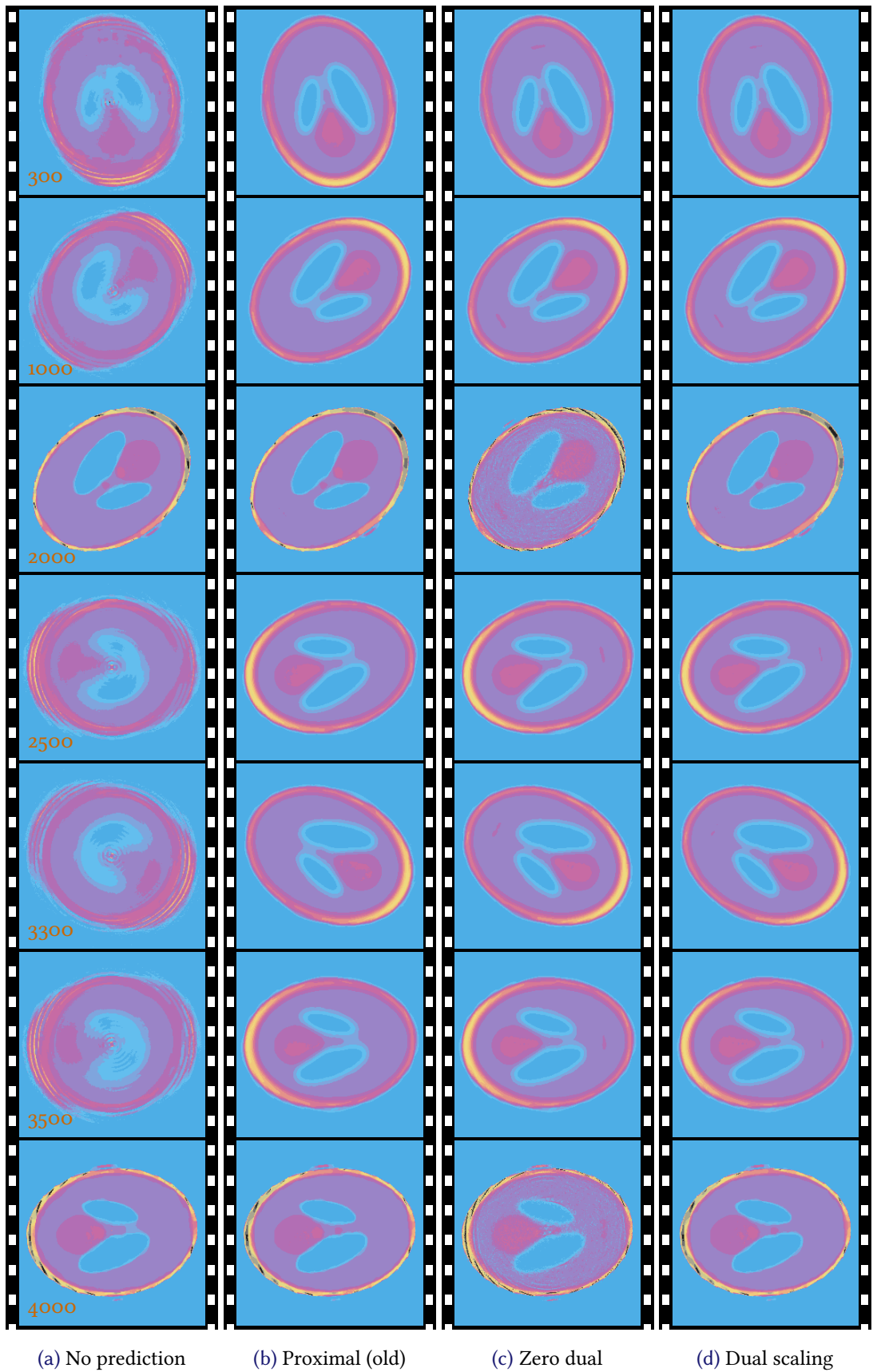}.
        The numbers on the left indicate the iteration.
    }
    \label{fig:pet:reco:l}
\end{figure}

\begin{figure}[tp]%
    \centering
    \dostrip{\petBnoprediction}{\petBproximal}{\petBzerodual}{\petBdualscaling}
    \caption{%
        Brain phantom reconstructions for several predictors.
        The colours represent values in $[0, 1]$ as \protect\includegraphics[height=1.5ex]{img/colorbar}.
        The numbers on the left indicate the iteration.
    }
     \label{fig:petb:reco:l}
\end{figure}

\afterpage{\clearpage}

\appendix


\begin{thebibliography}{10}

\bibitem{belmega2018online}
E.\,V{.\nobreak\kern 0.33333em}Belmega, P{.\nobreak\kern
  0.33333em}Mertikopoulos, R{.\nobreak\kern 0.33333em}Negrel, and
  L{.\nobreak\kern 0.33333em}Sanguinetti, Online convex optimization and
  no-regret learning: Algorithms, guarantees and applications, 2018,
  \href{https://arxiv.org/abs/804.04529}{\nolinkurl{arXiv:804.04529}}.

\bibitem{belzunce2020ultra}
M.\,A{.\nobreak\kern 0.33333em}Belzunce and A.\,J{.\nobreak\kern
  0.33333em}Reader, Technical note: ultra high-resolution radiotracer-specific
  digital pet brain phantoms based on the BigBrain atlas, \emph{Medical
  Physics} 47 (2020),  3356--3362,
  \href{https://dx.doi.org/10.1002/mp.14218}{\nolinkurl{doi:10.1002/mp.14218}}.

\bibitem{tuomov-phaserec}
M{.\nobreak\kern 0.33333em}Benning, L{.\nobreak\kern 0.33333em}Gladden,
  D{.\nobreak\kern 0.33333em}Holland, C.\,B{.\nobreak\kern
  0.33333em}Sch{\"o}nlieb, and T{.\nobreak\kern 0.33333em}Valkonen, Phase
  reconstruction from velocity-encoded {MRI} measurements -- {A} survey of
  sparsity-promoting variational approaches, \emph{Journal of Magnetic
  Resonance} 238 (2014),  26--43,
  \href{https://dx.doi.org/10.1016/j.jmr.2013.10.003}{\nolinkurl{doi:10.1016/j.jmr.2013.10.003}}.

\bibitem{bernstein2019online}
A{.\nobreak\kern 0.33333em}Bernstein, E{.\nobreak\kern 0.33333em}Dall'Anese,
  and A{.\nobreak\kern 0.33333em}Simonetto, Online primal-dual methods with
  measurement feedback for time-varying convex optimization, \emph{IEEE
  Transactions on Signal Processing} 67 (2019),  1978--1991.

\bibitem{bousse2016maximum}
A{.\nobreak\kern 0.33333em}Bousse, O{.\nobreak\kern 0.33333em}Bertolli,
  D{.\nobreak\kern 0.33333em}Atkinson, S{.\nobreak\kern 0.33333em}Arridge,
  S{.\nobreak\kern 0.33333em}Ourselin, B.\,F{.\nobreak\kern 0.33333em}Hutton,
  and K{.\nobreak\kern 0.33333em}Thielemans, Maximum-Likelihood Joint Image
  Reconstruction/Motion Estimation in Attenuation-Corrected Respiratory Gated
  {PET/CT} Using a Single Attenuation Map, \emph{IEEE Transactions on Medical
  Imaging} 35 (2016),  217--228,
  \href{https://dx.doi.org/10.1109/TMI.2015.2464156}{\nolinkurl{doi:10.1109/tmi.2015.2464156}}.

\bibitem{burger2017variational}
M{.\nobreak\kern 0.33333em}Burger, H{.\nobreak\kern 0.33333em}Dirks,
  L{.\nobreak\kern 0.33333em}Frerking, A{.\nobreak\kern 0.33333em}Hauptmann,
  T{.\nobreak\kern 0.33333em}Helin, and S{.\nobreak\kern 0.33333em}Siltanen, A
  variational reconstruction method for undersampled dynamic x-ray tomography
  based on physical motion models, \emph{Inverse Problems} 33 (2017),  124008.

\bibitem{chambolle2004algorithm}
A{.\nobreak\kern 0.33333em}Chambolle, An algorithm for total variation
  minimization and applications, \emph{Journal of Mathematical Imaging and
  Vision} 20 (2004),  89--97,
  \href{https://dx.doi.org/10.1023/B:JMIV.0000011325.36760.1e}{\nolinkurl{doi:10.1023/b:jmiv.0000011325.36760.1e}}.

\bibitem{chambolle2010first}
A{.\nobreak\kern 0.33333em}Chambolle and T{.\nobreak\kern 0.33333em}Pock, A
  first-order primal-dual algorithm for convex problems with applications to
  imaging, \emph{Journal of Mathematical Imaging and Vision} 40 (2011),
  120--145,
  \href{https://dx.doi.org/10.1007/s10851-010-0251-1}{\nolinkurl{doi:10.1007/s10851-010-0251-1}}.

\bibitem{chang2021online}
T.\,J{.\nobreak\kern 0.33333em}Chang and S{.\nobreak\kern
  0.33333em}Shahrampour, On online optimization: Dynamic regret analysis of
  strongly convex and smooth problems, in \emph{Proceedings of the AAAI
  Conference on Artificial Intelligence}, volume~35, 2021,  6966--6973.

\bibitem{clasonvalkonen2020nonsmooth}
C{.\nobreak\kern 0.33333em}Clason and T{.\nobreak\kern 0.33333em}Valkonen,
  Introduction to Nonsmooth Analysis and Optimization, 2020,
  \href{https://arxiv.org/abs/2001.00216}{\nolinkurl{arXiv:2001.00216}}.
\newblock Work in progress.

\bibitem{franzenkodak}
R{.\nobreak\kern 0.33333em}Franzen, Kodak lossless true color image suite,
  PhotoCD PCD0992. Lossless, true color images released by the Eastman Kodak
  Company, 1999, \url{http://r0k.us/graphics/kodak/}.

\bibitem{hall13dynamical}
E{.\nobreak\kern 0.33333em}Hall and R{.\nobreak\kern 0.33333em}Willett,
  Dynamical models and tracking regret in online convex programming, in
  \emph{Proceedings of the 30th International Conference on Machine Learning},
  S{.\nobreak\kern 0.33333em}Dasgupta and D{.\nobreak\kern 0.33333em}McAllester
  (eds.), volume~28 of Proceedings of Machine Learning Research, PMLR, Atlanta,
  Georgia, USA, 2013,  579--587,
  \url{http://proceedings.mlr.press/v28/hall13.html}.

\bibitem{hazan2016introduction}
E{.\nobreak\kern 0.33333em}Hazan et~al., Introduction to online convex
  optimization, \emph{Foundations and Trends in Optimization} 2 (2016),
  157--325.

\bibitem{he2012convergence}
B{.\nobreak\kern 0.33333em}He and X{.\nobreak\kern 0.33333em}Yuan, Convergence
  analysis of primal-dual algorithms for a saddle-point problem: from
  contraction perspective, \emph{SIAM Journal on Imaging Sciences} 5 (2012),
  119--149,
  \href{https://dx.doi.org/10.1137/100814494}{\nolinkurl{doi:10.1137/100814494}}.

\bibitem{holland2010reducing}
D.\,J{.\nobreak\kern 0.33333em}Holland, D.\,M{.\nobreak\kern
  0.33333em}Malioutov, A{.\nobreak\kern 0.33333em}Blake, A.\,J{.\nobreak\kern
  0.33333em}Sederman, and L.\,F{.\nobreak\kern 0.33333em}Gladden, Reducing data
  acquisition times in phase-encoded velocity imaging u sing compressed
  sensing, \emph{Journal of Magnetic Resonance} 203 (2010),  236--46.

\bibitem{hunt2014weighing}
A{.\nobreak\kern 0.33333em}Hunt, Weighing without touching: applying electrical
  capacitance tomography to mass flowrate measurement in multiphase flows,
  \emph{Measurement and Control} 47 (2014),  19--25,
  \href{https://dx.doi.org/10.1177/0020294013517445}{\nolinkurl{doi:10.1177/0020294013517445}}.

\bibitem{iwao2022brain}
Y{.\nobreak\kern 0.33333em}Iwao, G{.\nobreak\kern 0.33333em}Akamatsu,
  H{.\nobreak\kern 0.33333em}Tashima, M{.\nobreak\kern 0.33333em}Takahashi, and
  T{.\nobreak\kern 0.33333em}Yamaya, Brain PET motion correction using 3D
  face-shape model: the first clinical study, \emph{Annals of Nuclear Medicine}
  36 (2022),  904--912.

\bibitem{jensen2022nonsmooth}
B{.\nobreak\kern 0.33333em}Jensen and T{.\nobreak\kern 0.33333em}Valkonen, A
  nonsmooth primal-dual method with interwoven {PDE} constraint solver,
  \emph{Computational Optimization and Appplications}  (2024),
  \href{https://dx.doi.org/10.1007/s10589-024-00587-3}{\nolinkurl{doi:10.1007/s10589-024-00587-3}},
  \href{https://arxiv.org/abs/2211.04807}{\nolinkurl{arXiv:2211.04807}}.

\bibitem{kar2010gossip}
S{.\nobreak\kern 0.33333em}Kar and J.\,M{.\nobreak\kern 0.33333em}Moura, Gossip
  and distributed Kalman filtering: Weak consensus under weak detectability,
  \emph{IEEE Transactions on Signal Processing} 59 (2010),  1766--1784.

\bibitem{lipponen2011nonstationary}
A{.\nobreak\kern 0.33333em}Lipponen, A{.\nobreak\kern 0.33333em}Seppänen, and
  J.\,P{.\nobreak\kern 0.33333em}Kaipio, Nonstationary approximation error
  approach to imaging of three-dimensional pipe flow: experimental evaluation,
  \emph{Measurement Science and Technology} 22 (2011),  104013,
  \href{https://dx.doi.org/10.1088/0957-0233/22/10/104013}{\nolinkurl{doi:10.1088/0957-0233/22/10/104013}}.

\bibitem{natterer2001mathematics}
F{.\nobreak\kern 0.33333em}Natterer, \emph{The Mathematics of Computerized
  Tomography}, Society for Industrial and Applied Mathematics, 2001,
  \href{https://dx.doi.org/10.1137/1.9780898719284}{\nolinkurl{doi:10.1137/1.9780898719284}}.

\bibitem{Nonhoff2020945}
M{.\nobreak\kern 0.33333em}Nonhoff and M.\,A{.\nobreak\kern 0.33333em}Müller,
  Online Gradient Descent for Linear Dynamical Systems,
  \emph{IFAC-PapersOnLine} 53 (2020),  945--952,
  \href{https://dx.doi.org/https://doi.org/10.1016/j.ifacol.2020.12.1258}{\nolinkurl{doi:https://doi.org/10.1016/j.ifacol.2020.12.1258}}.
\newblock 21st IFAC World Congress.

\bibitem{olfati2007consensus}
R{.\nobreak\kern 0.33333em}Olfati-Saber, J.\,A{.\nobreak\kern 0.33333em}Fax,
  and R.\,M{.\nobreak\kern 0.33333em}Murray, Consensus and cooperation in
  networked multi-agent systems, \emph{Proceedings of the IEEE} 95 (2007),
  215--233.

\bibitem{orabona2020modern}
F{.\nobreak\kern 0.33333em}Orabona, A Modern Introduction to Online Learning,
  2020, \href{https://arxiv.org/abs/1912.13213}{\nolinkurl{arXiv:1912.13213}}.

\bibitem{simonetto2018dual}
A{.\nobreak\kern 0.33333em}Simonetto, Dual prediction--correction methods for
  linearly constrained time-varying convex programs, \emph{IEEE Transactions on
  Automatic Control} 64 (2018),  3355--3361.

\bibitem{simonetto2020time}
A{.\nobreak\kern 0.33333em}Simonetto, E{.\nobreak\kern 0.33333em}Dall'Anese,
  S{.\nobreak\kern 0.33333em}Paternain, G{.\nobreak\kern 0.33333em}Leus, and
  G.\,B{.\nobreak\kern 0.33333em}Giannakis, Time-varying convex optimization:
  Time-structured algorithms and applications, \emph{Proceedings of the IEEE}
  108 (2020),  2032--2048.

\bibitem{simonetto2017prediction}
A{.\nobreak\kern 0.33333em}Simonetto and E{.\nobreak\kern
  0.33333em}Dall’Anese, Prediction-correction algorithms for time-varying
  constrained optimization, \emph{IEEE Transactions on Signal Processing} 65
  (2017),  5481--5494.

\bibitem{simonetto2016class}
A{.\nobreak\kern 0.33333em}Simonetto, A{.\nobreak\kern 0.33333em}Mokhtari,
  A{.\nobreak\kern 0.33333em}Koppel, G{.\nobreak\kern 0.33333em}Leus, and
  A{.\nobreak\kern 0.33333em}Ribeiro, A class of prediction-correction methods
  for time-varying convex optimization, \emph{IEEE Transactions on Signal
  Processing} 64 (2016),  4576--4591.

\bibitem{tang2022running}
Y{.\nobreak\kern 0.33333em}Tang, E{.\nobreak\kern 0.33333em}Dall'Anese,
  A{.\nobreak\kern 0.33333em}Bernstein, and S{.\nobreak\kern 0.33333em}Low,
  Running primal-dual gradient method for time-varying nonconvex problems,
  \emph{SIAM Journal on Control And Optimization} 60 (2022),  1970--1990.

\bibitem{tico2009digital}
M{.\nobreak\kern 0.33333em}Tico, Digital image stabilization, \emph{Recent
  Advances in Signal Processing}  (2009).

\bibitem{tuomov-proxtest}
T{.\nobreak\kern 0.33333em}Valkonen, Testing and non-linear preconditioning of
  the proximal point method, \emph{Applied Mathematics and Optimization} 82
  (2020),  591--636,
  \href{https://dx.doi.org/10.1007/s00245-018-9541-6}{\nolinkurl{doi:10.1007/s00245-018-9541-6}}.

\bibitem{tuomov-firstorder}
T{.\nobreak\kern 0.33333em}Valkonen, First-order primal-dual methods for
  nonsmooth nonconvex optimisation, in \emph{Handbook of Mathematical Models
  and Algorithms in Computer Vision and Imaging}, K{.\nobreak\kern
  0.33333em}Chen, C.\,B{.\nobreak\kern 0.33333em}Schönlieb,
  X.\,C{.\nobreak\kern 0.33333em}Tai, and L{.\nobreak\kern 0.33333em}Younes
  (eds.), Springer, Cham, 2021,
  \href{https://dx.doi.org/10.1007/978-3-030-03009-4_93-1}{\nolinkurl{doi:10.1007/978-3-030-03009-4_93-1}},
  \href{https://arxiv.org/abs/1910.00115}{\nolinkurl{arXiv:1910.00115}}.

\bibitem{tuomov-predict}
T{.\nobreak\kern 0.33333em}Valkonen, Predictive online optimisation with
  applications to optical flow, \emph{Journal of Mathematical Imaging and
  Vision} 63 (2021),  329--355,
  \href{https://dx.doi.org/10.1007/s10851-020-01000-4}{\nolinkurl{doi:10.1007/s10851-020-01000-4}},
  \href{https://arxiv.org/abs/2002.03053}{\nolinkurl{arXiv:2002.03053}}.

\bibitem{predict-code}
T{.\nobreak\kern 0.33333em}Valkonen, N{.\nobreak\kern 0.33333em}Dizon, and
  J{.\nobreak\kern 0.33333em}Jauhiainen, Predictive online optimisation codes
  for dynamic inverse imaging problems, Software on Zenodo, 2024,
  \href{https://dx.doi.org/10.5281/zenodo.12667014}{\nolinkurl{doi:10.5281/zenodo.12667014}}.

\bibitem{zhang2023regrets}
L{.\nobreak\kern 0.33333em}Zhang, H{.\nobreak\kern 0.33333em}Liu, and
  X{.\nobreak\kern 0.33333em}Xiao, Regrets of proximal method of multipliers
  for online non-convex optimization with long term constraints, \emph{Journal
  of Global Optimization} 85 (2023),  61--80.

\bibitem{zhang2021online}
Y{.\nobreak\kern 0.33333em}Zhang, E{.\nobreak\kern 0.33333em}Dall’Anese, and
  M{.\nobreak\kern 0.33333em}Hong, Online proximal-ADMM for time-varying
  constrained convex optimization, \emph{IEEE Transactions on Signal and
  Information Processing over Networks} 7 (2021),  144--155.

\bibitem{zhang2019distributed}
Y{.\nobreak\kern 0.33333em}Zhang, R.\,J{.\nobreak\kern 0.33333em}Ravier,
  V{.\nobreak\kern 0.33333em}Tarokh, and M.\,M{.\nobreak\kern
  0.33333em}Zavlanos, Distributed Online Convex Optimization with Improved
  Dynamic Regret, 2019,
  \href{https://arxiv.org/abs/1911.05127}{\nolinkurl{arXiv:1911.05127}}.

\bibitem{zhou2016image}
J{.\nobreak\kern 0.33333em}Zhou, P{.\nobreak\kern 0.33333em}Hubel,
  M{.\nobreak\kern 0.33333em}Tico, A.\,N{.\nobreak\kern 0.33333em}Schulze, and
  R{.\nobreak\kern 0.33333em}Toft, Image registration methods for still image
  stabilization, 2016.
\newblock US Patent 9,384,552.

\end{thebibliography}
\end{document}